\def\XXint#1#2#3{{\setbox0=\hbox{$#1{#2#3}{\int}$ }
\vcenter{\hbox{$#2#3$ }}\kern-.6\wd0}}
\newcounter{hours}\newcounter{minutes}
\theoremstyle{theorem}
\newtheorem{thm}{Theorem}[section]
\newtheorem{lem}[thm]{Lemma}
\newtheorem{cor}[thm]{Corollary}
\newtheorem{prop}[thm]{Proposition}
\newtheorem{THM}{Theorem}
\theoremstyle{definition}
\newtheorem{DEF}[thm]{Definition}
\newtheorem{hyp}{Assumption}
\theoremstyle{remark}                  
\newtheorem{rem}[thm]{Remark}
\theoremstyle{theorem}
\newtheorem*{rep@theorem}{\rep@title}
\newcommand{\newreptheorem}[2]{%
	\newenvironment{rep#1}[1]{%
		\def\rep@title{#2 \ref{##1}}%
		\begin{rep@theorem}}%
		{\end{rep@theorem}}}
\theoremstyle{definition}
\newtheorem*{REP@theorem}{\rep@title}
\newcommand{\newREPtheorem}[2]{%
	\newenvironment{REP#1}[1]{%
		\def\rep@title{#2 \ref{##1}}%
		\begin{REP@theorem}}%
		{\end{REP@theorem}}}
\def\N{{\mathcal N}}
\def\R{{\mathbb R}}
\def\e{\varepsilon}
\def\d{\textnormal{d}}
\DeclareMathOperator*{\argmin}{\arg\!\min}
\def\limsup{\mathop{\lim\,\sup}\limits}%
\def\argmin{\mathop{\arg\,\min}\limits}%
\numberwithin{equation}{section}
\newcommand{\sd}{\textnormal{sd}}
\newcommand{\oQ}{\overline{Q}}
\newcommand{\rr}{\rho\textit{-reflection}}
\begin{document}

\title{On Mean Curvature Flow with forcing}
\author{Inwon Kim}
\address{Department of Mathematics, UCLA, Los Angeles, USA}
\thanks{Inwon Kim was partially supported by NSF DMS-1300445.}
\email{ikim@math.ucla.edu}

\author{Dohyun Kwon}
\address{Department of Mathematics, UCLA, Los Angeles, USA}
\email{dhkwon@ucla.edu}
\thanks{Dohyun Kwon was partially supported by National Institute for Mathematical Sciences(NIMS) funded by the Ministry of Science, ICT \& Future Planning(B21501)}
\date{}

\subjclass[2010]{}

\keywords{Mean curvature flow, viscosity solutions, minimizing movements, star-shaped, moving planes method}

\begin{abstract}
This paper investigates geometric properties and  well-posedness of a mean curvature flow with volume-dependent forcing. With the class of forcing which bounds the volume of the evolving set away from zero and infinity, we show that a strong version of star-shapedness is preserved over time. More precisely, it is shown  that the flow preserves the {\it $\rho$-reflection property}, which corresponds to a quantitative Lipschitz property of the set with respect to the nearest ball. Based on this property we show that the problem is well-posed and its solutions starting with $\rho$-reflection property become instantly smooth.  Lastly, for a model problem, we will discuss the flow's exponential convergence to the unique equilibrium in Hausdorff topology. For the analysis, we adopt the approach developed in \cite{Feldman:2014hb} to combine viscosity solutions approach and variational method.  The main challenge lies in the lack of comparison principle, which accompanies forcing terms that penalize small volume.
\end{abstract}

\maketitle

\section{Introduction}

In this paper we consider the sets $(\Omega_t)_{t>0}$ in $\R^n$ moving by the motion law 
\begin{equation}\label{main}
V = -H + \lambda[|\Omega_t|] \quad \hbox{ on } \partial\Omega_t.
\end{equation}
Here $V=V(x,t)$ and $H=H(x,t)$ respectively denote the outward normal velocity and the mean curvature of $\partial\Omega_t$ at $x\in\partial \Omega_t$, where $H$ is set to be positive if $\Omega_t$ is convex at the point.  The volume-dependent forcing $\lambda:\R^+ \to \R$ will be assumed to be locally Lipschitz with growth conditions (see Assumption A below). 

\medskip

We are interested in the global-time description of the flow, including its well-posedness. In general, due to the low-dimensional nature of the interface, finite-time topological singularities are expected even for interfaces starting out with smooth shapes. 
On the other hand \eqref{main} is a parabolic flow, and thus parabolic regularity theory applies once we know that the evolving boundary $\partial\Omega_t$ is locally a graph. Thus our first goal is to establish an a priori graph property of $\partial\Omega_t$ by studying the geometry of the evolution. 

\medskip

The forcing $\lambda$ we consider in this article keeps the volume of $\Omega_t$ bounded away from zero and infinity. With such choices of forcing we will show that a strong version of star-shapedness property holds for $\Omega_t$ at all $t>0$ if initially true, assuming the existence of the flow. Let us remark that this geometric result does not extend to the classical mean curvature flow where $\lambda = 0$.  With zero forcing and with star-shaped initial set, solutions of \eqref{main} have been shown to hold certain semi-convexity estimates by Smoczyk \cite{Smo1} and Lin \cite{Lin}. While these estimates allow classification of possible singularities for the flow in terms of blow-up limits,  it remains open whether the initially star-shaped flow stays star-shaped beyond the initial time even with zero forcing.

\medskip

While the zero forcing case appears to be out of reach at the moment, in the subsequent work \cite{KK19} we show that our analysis applies to the volume-preserving mean curvature flow, using approximate forcing $\lambda_{\delta} =\frac{1}{\delta}( |\Omega_0| - |\Omega|)$ with small $\delta>0$ which satisfies our assumptions. One of the main focus in \cite{KK19} will be on obtaining a sufficiently strong convergence of this approximation to preserve the flow's geometric properties.

\medskip

With the a priori geometric property of the flow, we next discuss existence and uniqueness of the flow \eqref{main} based on its variational structure.  A formal calculation yields the energy inequality 
\begin{align}\label{energy}
\frac{d}{dt} J(t) = -\int_{\partial\Omega_t} V^2 dS,
\end{align}
where $J(t) = \mathrm{Per}(\Omega_t) - \Lambda(|\Omega_t|)$ with $\Lambda$ the anti-derivative of $\lambda$ and $V$  as given in \eqref{main}. From \eqref{energy} one expects $\Omega_t$ to flow toward a stationary point of the energy as time grows. We will make this observation rigorous by generating a discrete-time approximation (or ``minimizing movement") that satisfies the energy dissipation.  The aformentioned a priori geometric property enables the uniform convergence of its discrete time approximations, to guarantee that in the continuum limit we recover a smooth solution. For long-time behavior, to illustrate our ideas, we show an exponential convergence of the sets for the following specific flow 
\begin{align}\label{main1}
V= -H + |\Omega_t|^{\gamma}, \hbox{ where } \gamma < -1/n
\end{align}
which has a simple energy structure with the unique stable equilibrium. 

\medskip

While the variational approach yields the minimizing movements approximation as well as the asymptotic analysis of the flow, viscosity solutions are more suited for  geometric arguments.  To take advantage of both approaches we will show that the variational flow is, in a sense, a viscosity solution of \eqref{main}. This idea of combining the two approaches were previously used for the mean curvature flow  in \cite{Chambolle:2004ds}, but in our problem the standard maximum principle does not apply for  \eqref{main}, and thus the notion of viscosity solutions needs to be modified from the standard one. Indeed our main novelty in the analysis is to combine these two approaches to address geometric motions which do not satisfy a comparison principle but still is of parabolic nature.  For free boundary problems this combination has been introduced in \cite{Feldman:2014hb}, where the presence of bulk pressure plays a crucial role in the analysis.
 
\medskip

To state the main results, let us begin with discussing the assumptions on the forcing.
\begin{hyp}
\label{hyp_a}
$\lambda : \R^+ \rightarrow \R$ is locally Lipschitz continuous and satisfies $\limsup\limits_{R\rightarrow \infty}\frac{\lambda[|B_R|]}{R} < \infty$. In addition, there exists $\rho>0$ such that $\lambda[|\Omega|] > \frac{n-1}{\rho}$ for  all $\Omega \subset \overline{B}_{5\rho}$.
\end{hyp}

The first part of the assumption is necessary to show that the evolution is unique and the set does not spread to $\R^n$ in finite time. The second part puts a sufficient penalty on shrinkage of the evolution, and is used in showing that the evolution always contains a small ball $B_{\rho}(0)$ if initially so (Lemma~\ref{Brho}). Both  $\lambda$ given in \eqref{main1} and $\lambda$ given by a large multiple of $C- |\Omega|$ satisfy Assumption A.

\medskip

With the parameter $\rho$ given from above assumption, we also assume the following on the initial data.
\begin{hyp}
\label{hyp_b}
$\Omega_0$ has $\rr$ (see Definition~\ref{rho-ref}).
\end{hyp}
The $\rho$-reflection property should be interpreted  as a quantitative smallness requirement
on the Lipschitz norm distance between $\Omega_0$ and the nearest ball (see Lemma~\ref{star}).

\medskip

We are now ready to summarize our results. We adopt Definition~\ref{def-sol} as the notion of solutions for \eqref{main}. Our first result states the preservation of the $\rho$-reflection property. The proof is based on the reflection maximum principle as well as various barrier arguments based on Assumption A.

\begin{THM} {\rm {[A priori geometric properties, Theorem~\ref{pre-star}]}}
Suppose that $\Omega_0$ has $\rr$ and there is a solution $(\Omega_t)_{t>0}$ of \eqref{main}. Then, $\Omega_t$ has $\rr$ at all times $t>0$.  In particular there exists $r_1= r_1(\rho)>0$ such that $\Omega_t$ is \textit{star-shaped with respect to a ball} $B_{r_1}(0)$ for all $t>0$.
\end{THM}

From above result and the volume bound it follows that $\Omega_t$ has locally Lipschitz boundary which is uniform in time. This fact endows sufficient compactness for the evolution that makes it possible for the discrete-time variational scheme to approximate the flow, in particular establishing the global existence results.
 
\begin{THM} {{\rm [Well-posedness, Theorem~\ref{coro}]}}
Suppose that $\Omega_0$ has $\rr$. Then, there exists a unique solution $(\Omega_t)_{t>0}$ of \eqref{main} that is bounded and has smooth boundary for every $t>0$. $(\Omega_t)_{t>0}$ can be approximated locally uniformly by minimizing movements with constraints.
\end{THM}
	
Lastly we discuss asymptotic convergence of the model flow \eqref{main1} with exponential convergence rate.
			
\begin{THM}{{\rm [Long-time behavior, Theorem \ref{exponential}]}}
Suppose that $\Omega_0$ has $\rr$.
For \eqref{main1}, there exists $T>0$ such that $\partial\Omega_t$ is uniformly $C^{1,1}$ for $t\geq T$. Moreover $\Omega_t$ converges to a ball of radius $r_\infty$ exponentially fast as $t\to\infty$, in Hausdorff distance.
\end{THM}

We expect that the above results can be extended to flows with more general forcing, such as the volume-preserving mean curvature flow. The main challenge there is in the lack of a priori regularity of $\lambda$. For further discussion we refer to \cite{KK19}, where Theorem 1 and 2 are shown for the volume preserving mean curvature flow. 

\bigskip

{\bf Literature}

\medskip

The viscosity solutions approach for \eqref{main} with fixed $\lambda$ was introduced by Evans and Spruck (\cite{Evans:1999tb}, \cite{Anonymous:JTDBs7kP}, \cite{Anonymous:UpA3UBgR},\cite{Evans:1995iw}), and by Chen, Giga and Goto \cite{Chen:1991u} in more general context. The minimizing movements were first introduced for 
\begin{align}\label{vanilla}
V= -H
\end{align} 
in \cite{Almgren:1993gw} and \cite{LS95}.  When the flow satisfies comparison principle, coincidence between flat flows and viscosity solutions has been established by Chambolle et al (\cite{Chambolle:2004ds}, \cite{CMP15}).

\medskip

Local regularity results are available for \eqref{vanilla} with the aforementioned unit density hypothesis, using the notion of varifold solutions from geometric measure theory (see Brakke \cite{Brakke}). Escher and Simonett \cite{Escher:1998} show that if the initial surface is sufficiently close to a sphere in $C^{1,\alpha}$ sense, then the volume preserving mean curvature flow converges to the sphere exponentially fast.

\medskip

Next we recall results addressing geometric properties of the flow.  In \cite{Huisken:69hmQ_XX}, Huisken showed that initially strictly convex surfaces evolving by \eqref{vanilla} stay convex and shrink to a point in finite time. Parallel results are shown in \cite{Hui87} for the volume-preserving mean curvature flow, where it is shown that convex surfaces converge into spheres.
These convergence results are extended respectively to anisotropic mean curvature flow by Andrews \cite{Anonymous:5CWDAe2w}, and to its volume-preserving version by Bellettini et al. in \cite{BCCN09}. 

\medskip

For non-convex sets, most available results investigates singularities of the flow \eqref{vanilla} for spatial dimensions larger than two (\cite{AAG1995}, \cite{HS1}, \cite{HS2}, \cite{Smo1}, \cite{Lin}).  For \eqref{vanilla} in space dimension two, Angenent proves in  \cite{A1}-\cite{A2} that the number of intersections of a pair of curves does not increase over the evolution, and in particular simply connected domains stay so until they shrink to a point.  Note that, for flows with forcing, topology of an evolving set may not be preserved even in two dimensions (see Figure ~\ref{fig_non_star}(A) and (B) for possible scenarios).

\begin{figure}[h]
	\centering
	\begin{subfigure}[t]{0.3\textwidth}
		\includegraphics[width=\textwidth]{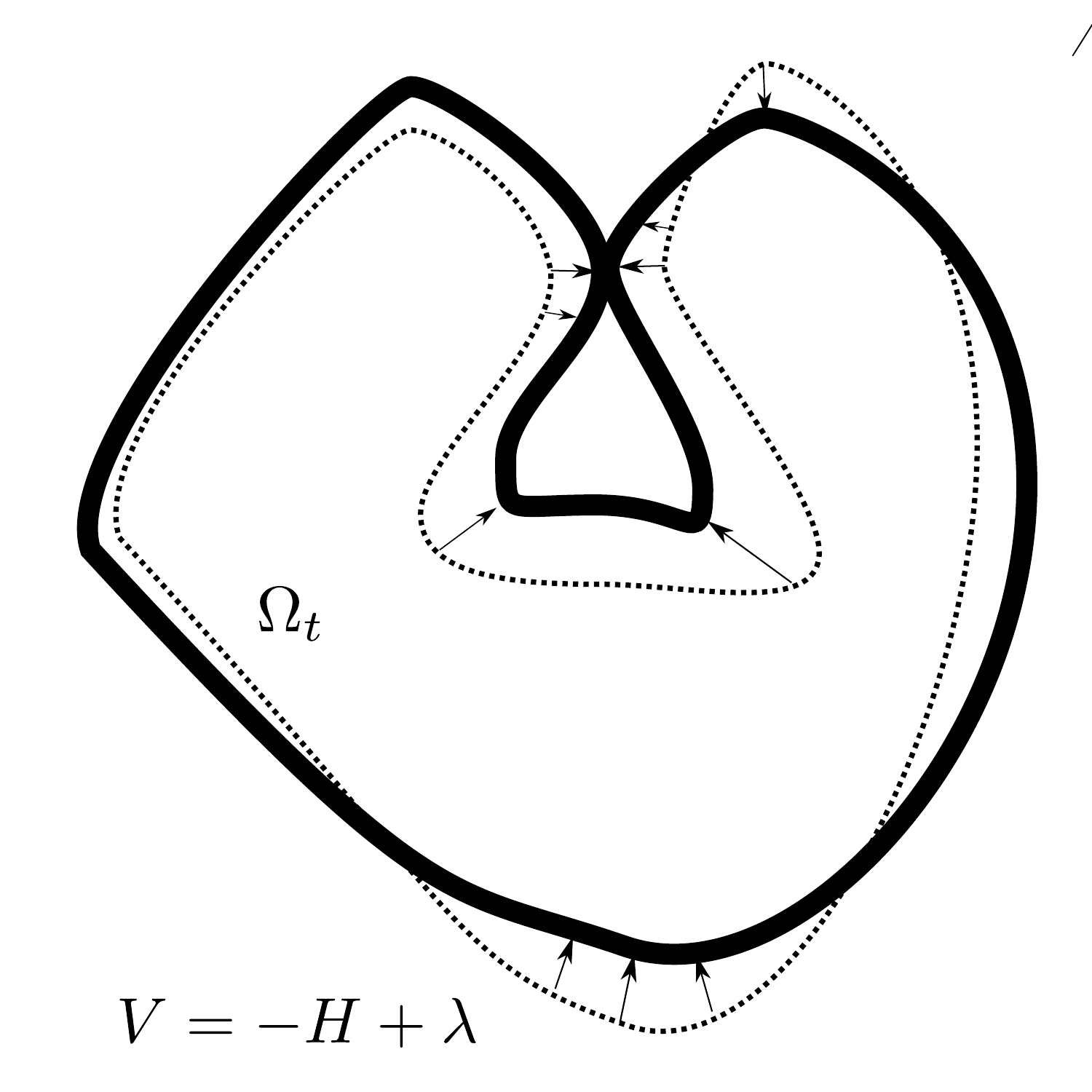}
		\caption{$\lambda>0$}
	\end{subfigure}
	\hspace{1cm}
	\begin{subfigure}[t]{0.3\textwidth}
		\includegraphics[width=\textwidth]{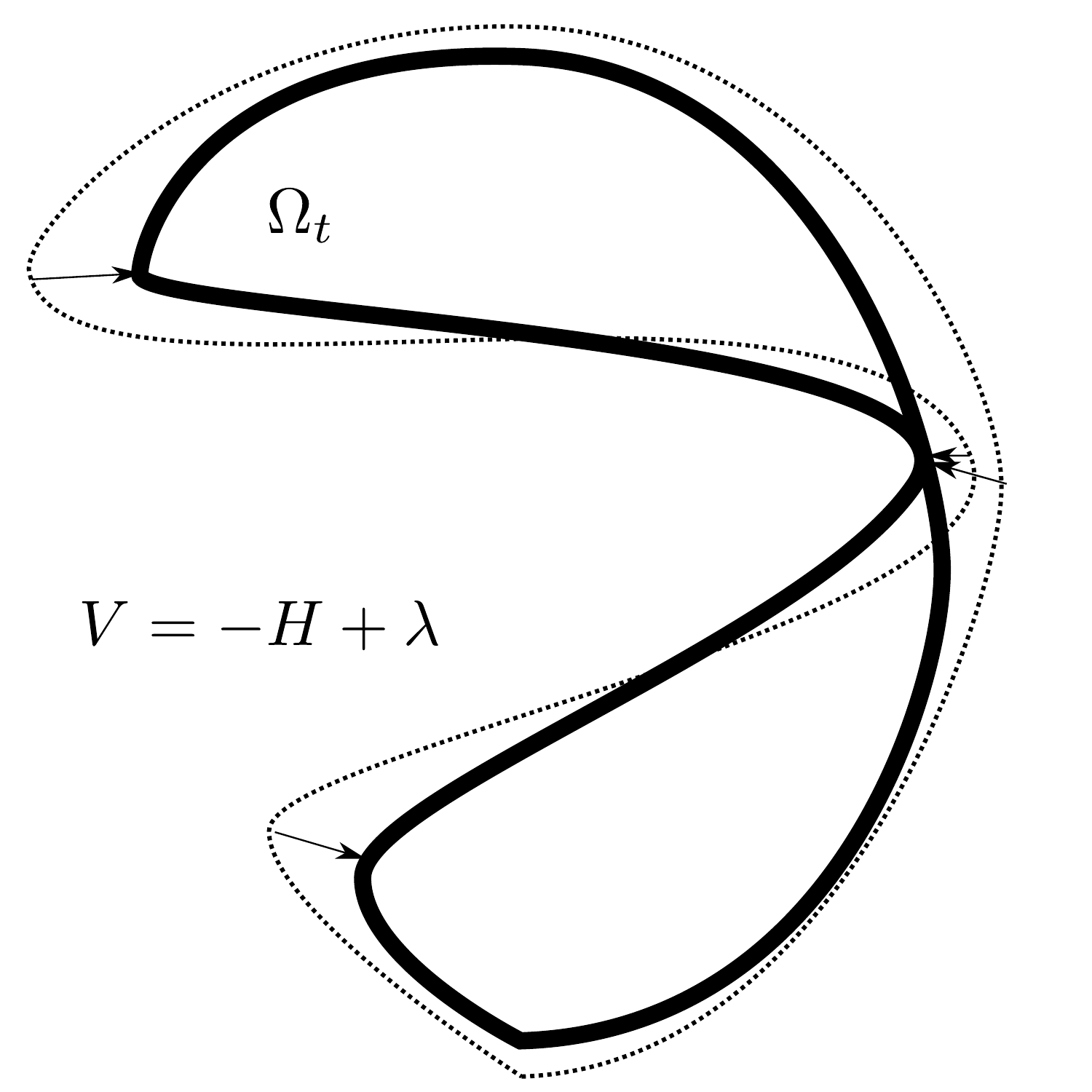}
		\caption{$\lambda< 0$}
	\end{subfigure}
	\caption{Singularities in Two Dimensions}
	\label{fig_non_star}
\end{figure}

\medskip

{\bf Outline of the paper }

In section ~\ref{sec_vis}  we give a definition on the notion of ``viscosity solutions" for \eqref{main} in terms of its level-set formulation.  To do so we first discuss the mean curvature flows with a priori fixed forcing,
\begin{align}\label{fixed}
V =  -H+ \eta(t).
\end{align}
Our solution $\Omega_t$  of \eqref{main} is then defined as the viscosity solution of \eqref{fixed} where $\eta(t)$ coincides with $\lambda[|\Omega_t|]$.

\medskip

In section \ref{sec_pre}  we show that \eqref{main} preserves the $\rho$-reflection property. As in \cite{Feldman:2014hb} our arguments are based on  reflection comparisons. More precisely,  for given $\nu,x_0 \in\R^n$  define $\Pi^+_{\nu,x_0}:= \{x+x_0: x\cdot\nu \geq 0\}$ and $\Pi_{\nu}:= \partial \Pi^+_{\nu,x_0}$. Since the normal velocity law  \eqref{main} is preserved with respect to spatial reflections, comparison principle applies in the region $\Pi^+_{\nu} \times [0,\infty)$ to $\Omega_t$ and $\Omega^{\nu,x_0}_t$, the reflected version of $\Omega_t$ with respect to $\Pi_{\nu}$.  It follows that if 
\begin{align}\label{order_reflection}
\Omega^{\nu,x_0}_0 \subset \Omega_0\hbox{ in }\Pi^+_{\nu},
\end{align}
then such property is preserved for later times. We will show that this property and Assumption B imply that $\partial\Omega_t$ is locally Lipschitz, as long as $\Omega_t$ contains a small neighborhood of the origin. Recall that \eqref{main} does not satisfy classical comparison principle. This is why we resort exclusively to this particular type of comparison arguments. 

\medskip

Section~\ref{sec_uni} yields uniqueness of solutions for \eqref{main}. The proof is based on small-time uniqueness for star-shaped solutions of \eqref{fixed}, and the Lipschitz continuity of $\lambda$ given by Assumption~\ref{hyp_a}.

\medskip

In section~\ref{sec_ene}, based on the discrete-time minimizing movement, we generate a {\it flat flow} of \eqref{main}  characterized as the continuum gradient flow of the energy functional $J(E)$ given in \eqref{energy}. Let us mention that, due to the lack of comparison principle, we need strong convergence of the discrete flow to characterize the continuum  limit. To this end we impose geometric constraints to the minimizing movement to generate sufficient compactness on the discrete flow: see Definition~\ref{dis-flow} and $\eqref{admissible}$.

\medskip

Section~\ref{sec_exi} discusses coincidence of the two notions of solutions. Based on Proposition \ref{cor_bar}, we show in Theorem \ref{coro} that the flat flow is the unique viscosity solution of \eqref{fixed} with $\eta(t) = \lambda[|\Omega_t|]$.

\medskip

Finally in section \ref{sec_reg} we address the  large-time behavior for \eqref{main1}. The gradient flow structure of \eqref{main} yields that $\Omega_t$  converges to a ball   in Hausdorff distance (Theorem~\ref{first}). Furthermore  we show that the convergence is in almost-$C^{1,1}$ sense (Lemma~\ref{holder}). Such regularity result invokes the center manifold approach taken in \cite{Escher:1998} to yield the exponential convergence of $\Omega_t$ to a unique ball of radius $r_{\infty}$.

\section{Viscosity Solutions}
\label{sec_vis}

Equation \eqref{main} can be formulated in terms of level sets, which allows us to introduce the notion of viscosity solutions for the flow. More precisely, for $Q:=\R^n \times (0, \infty)$ and $u : \oQ \rightarrow \R$,  let us define
\begin{align*}
\Omega_t = \Omega_t(u) := \{ x \in \R^n \mid u(x,t)>0 \} \hbox{ for } t \geq 0
\end{align*} 
and consider the following corresponding PDE of mean curvature flows with forcing:
\begin{align}
\tag{MF}
\label{main eq1}
\dfrac{u_t}{|Du|}(x,t) =  \nabla \cdot \left(\dfrac{Du}{|Du|} \right)(x,t) + \lambda[|\Omega_t(u)|] \hbox{ for } (x,t) \in Q.
\end{align} 

In this section, we introduce a weak notion of solutions for \eqref{main eq1}. To this end we first introduce $\eta(\cdot) : [0, +\infty) \rightarrow \R$ as an \textit{a priori} known continuous function of time $t$, and consider \begin{align}\label{main eq2}
\dfrac{u_t}{|Du|}(x,t) =  \nabla \cdot \left(\dfrac{Du}{|Du|}\right)(x,t) + \eta(t)\end{align}
with initial data
\begin{align}\label{initial}
u(x,0) = u_0(x) := \chi_{\Omega_0} - \chi_{\Omega_0^C} \hbox{ for } x \in \R^n.
\end{align}

\medskip

We begin by a list of definitions. 

\medskip

\noindent$\bullet$ For a open set $U \subset \R^n$, we define the parabolic cylinder $U_T := U \times (0,T]$ and the parabolic boundary of $U_T$, $\partial_p U_T := \overline{U}_T - U_T$. We define $Q_T:= \R^n \times (0,T]$.

\noindent$\bullet$ We denote $\mathcal{S}^{n\times n}$ as the space of $n \times n$ real symmetric matrices. 

\noindent$\bullet$  For a function $h:\oQ\to \R$ we denote its positive set by $\Omega_t(h):=\{x \in \R^n: h(x,t)>0\}$ for $t \geq 0$.

\noindent $\bullet$ For  $f : U \subset \R^d \rightarrow \R$ and $d \in \mathbb{N}$, we denote the lower and upper semi-continuous envelope of $f$, $f_*$ and $f^*: \overline{U} \to \R$  by 
\begin{align}
f_*(x) := \lim_{\e \downarrow 0} \inf_{\substack{|x-y| < \e,\\ y \in U}} f(y) \hbox{ and } f^*(x) := \lim_{\e \downarrow 0} \sup_{\substack{|x-y| < \e,\\ y \in U}} f(y)
\end{align}
where $\overline{U}$ denotes the closure of $U$ in $\R^d$.

\noindent$\bullet$
For two evolving sets $\Omega_t$ and $\tilde{\Omega}_t$, we say that its boundary $\partial \Omega_t$ touches $\partial\tilde{\Omega}_t$ from inside (or outside, respectively) at $(x_0,t_0) \in Q$ if there exists a neighborhood $\N$ of $(x_0, t_0)$ such that $\Omega_t \cap \N \subset \tilde{\Omega}_t \cap \N$ (or $\tilde{\Omega}_t \cap \N \subset \Omega_t \cap \N$, respectively) in $[0,t_0]$, $(\partial \Omega_t \cap \partial \tilde{\Omega}_t) \cap \N = \emptyset$ in $[0,t_0)$ and $x_0 \in \partial \Omega_{t_0} \cap \partial \tilde{\Omega}_{t_0}$.

\noindent$\bullet$
For a set $U$ in $\R^d$ and $d \in \mathbb{N}$, we denote the signed distance function by
\begin{align}
\label{eqn:sd}
\sd(x,U) := \d(x,U) - \d(x,U^C).
\end{align}
We use the convention that $\sd(x,U) := \infty$ if $U$ is empty and $\sd(x,U) := -\infty$ if $U^c$ is empty.

\medskip

For later purposes we also consider the {\it restricted} flow
\begin{align}\label{main_eq22}
\dfrac{u_t}{|Du|}(x,t) =  \max \left\{ \nabla \cdot \left(\dfrac{Du}{|Du|}\right)(x,t) + \eta(t), - M \right\} \hbox{ for } (x,t) \in Q.
\end{align}
Here, $M$ is a given positive constant, which will be fixed large enough later on.

\medskip

Now we recall  the definition viscosity solutions for equations \eqref{main eq2} and \eqref{main_eq22}. Let us denote $A:=( \R^n \setminus \{0\})  \times \mathcal{S}^{n \times n}\times [0,\infty)$ and define $F : A  \rightarrow \R$ by
$$F(p,X,t) := \text{trace}\left( \left(I - \dfrac{p}{|p|} \otimes \dfrac{p}{|p|}\right)X \right) + \eta(t) |p|.$$
Then, the equation  \eqref{main eq2} can be rewritten in the form of 
$$u_t = F(Du, D^2u,t).$$ 
Since the set $A$ is dense in $\R^n  \times \mathcal{S}^{n \times n}\times [0,\infty)$, the envelopes $F_*$ and $F^*$ are well-defined in $\R^n  \times \mathcal{S}^{n \times n}\times [0,\infty)$ with value in $\R \cup \{ \pm \infty \}$. 

\medskip

Recall a test function from \cite[Definition 3.2]{ImbSil13}. We say that a function $\phi : Q \to \R$ is \textit{a test function} on $Q$ is  if $\phi$ is $C^2$ with respect to $x$ and $C^1$ with respect to $t$.

\begin{DEF} \cite[Definition 2.1]{Chen:1991u}, \cite[Definition 6.1]{Barles:2013gx} \label{def-vis}
\begin{itemize}
\item[(a)] A function $u: \oQ \to \R$ is \textit{a viscosity subsolution} of \eqref{main eq2} if  $u^*< +\infty$ and for any test function $\phi$ on $Q$ that touches $u^* $ from above at $(x_0,t_0)$ we have
\begin{align*}
\phi_t(x_0,t_0) \leq  F^*(D\phi(x_0,t_0), D^2\phi(x_0,t_0), t_0).
\end{align*}

\item[(b)] A function $u: \oQ\to \R$ is \textit{a viscosity supersolution} of \eqref{main eq2} if $u_*>-\infty$ and for any test function $\phi$ on $Q$ that touches $u_* $ from below at $(x_0,t_0)$ we have
\begin{align*}
\phi_t(x_0,t_0) \geq  F_*(D\phi(x_0,t_0), D^2\phi(x_0,t_0),t_0).
\end{align*}
\item[(c)] A function $u: \oQ\to \R$ is \textit{a viscosity solution} of \eqref{main eq2} with initial data $u_0:\R^n \to \R$
if $u^*$ is \textit{a viscosity subsolution} and $u_*$ is \textit{a viscosity supersolution}, and if $u^*= {(u_0)}^*$ and $u_*= {(u_0)}_*$ at $t=0$. 
\end{itemize}
\end{DEF}

Parallel definitions can be made for viscosity sub- and supersolutions of \eqref{main_eq22}. 

\begin{thm}
\label{comparison}
$\,$
\begin{enumerate}
\item
\cite[Theorem 2.1]{GGIS}
Let $T>0$ and $U$ be a domain in $\R^n$, not necessarily bounded. Let $u$ and $v$ be a bounded subsolution and supersolution, respectively, of \eqref{main eq2} (or \eqref{main_eq22}).
If $u^* \leq v_*$ on $\partial_p U_T$, then we have $u^* \leq v_* \hbox{ on } U_T$.
\item
\cite[Theorem 1.1]{Barles:1993gaba}
For a given bounded domain $\Omega_0\subset \R^n$ and uniformly continuous initial data $u_0 : \R^n \to \R$ such that $\Omega_0 = \{ x \in \R^n : u_0(x) = 0\}$, there exists a unique viscosity solution $u$ of \eqref{main eq2} (or \eqref{main_eq22}), which is uniformly continuous in $\oQ$.
\item 
\cite[Theorems 1.1]{Barles:1993gaba} Let $u$ and $v$ be a uniformly continuous subsolution and supersolution, respectively, of \eqref{main eq2} (or \eqref{main_eq22}) in $\oQ$. If $u(\cdot, 0) \leq v(\cdot, 0)$ in $\R^n$, then we have $u \leq v$ in $\oQ$.
\end{enumerate}
\end{thm}

The following lemma is a consequence of the stability properties of viscosity solutions: see for instance Lemma 6.1 in \cite{Crandall:1992kn}.

\begin{lem}
$\,$	\label{lem_sta}
\begin{itemize}
\item[(a)]
For $n\in\mathbb{N}$, let $u_n:=\chi_{\Omega^n_t} -\chi_{(\Omega^n_t)^C}$ be a viscosity solution of \eqref{main eq2}(or \eqref{main_eq22}) in $Q$. If $\partial\Omega_t^n$ converges to $\partial\Omega_t$ as $n\to\infty$ in Hausdorff distance, uniformly for all $t>0$,  then $u:= \chi_{\Omega_t}-\chi_{\Omega_t^C}$ is a viscosity solution of \eqref{main eq2} (or \eqref{main_eq22}).
\medskip
\item[(b)]
For $n\in\mathbb{N}$, let $u_n:=\chi_{\Omega^n_t} -\chi_{(\Omega^n_t)^C}$ be a viscosity solution of \eqref{main_eq22} in $Q$ with $M=M_n$ where $M_n \rightarrow \infty$ as $n \rightarrow \infty$ and with initial data $u_0$. 
If $\partial\Omega_t^n$ uniformly converges to $\partial\Omega_t$ in Hausdorff distance, uniformly for all $t>0$, then $u$ is a viscosity solution of \eqref{main eq2}.
\end{itemize}
\end{lem}

Note that \eqref{main_eq22} as well as \eqref{main eq2} are {\it geometric}, that is $F$ satisfies the scaling invariance
\begin{align}
F(ap, aX + b p \otimes p,t ) = aF(p,X,t)
\end{align} 
for $a>0$, $b\in \R$, $p \in \R^n$, $X \in  \mathcal{S}^{n \times n}$ and $t>0$.
So, \eqref{main eq2} and \eqref{main_eq22} have the following invariance of geometric equations.

\begin{thm}
\cite[Theorem 4.2.1]{Gig06}
\label{thm-invar}	
Let $u$ and $v$ be a subsolution and supersolution, respectively, of \eqref{main eq2} (or \eqref{main_eq22}). If $\phi : \R \rightarrow \R$ is upper semicontinous and nondecreasing, then the composite function $\phi \circ u$ is also a subsolution. Similarly, if $\phi : \R \rightarrow \R$ is lower semicontinous and nondecreasing, then $\phi \circ v$ is also a supersolution.
\end{thm}

Let  $v$ be a continuous viscosity solution of \eqref{main eq2} with uniformly continuous initial data $u_0 : \R^n \to \R$ such that $\Omega_0 = \{ x \in \R^n : u_0(x) = 0\}$. Based on the invariance in Theorem \ref{thm-invar} and the stability of viscosity solutions in \cite[Lemma 6.1]{Crandall:1992kn}, we obtain a discontinuous viscosity solution $u$ of \eqref{main eq2} and \eqref{initial} given by
\begin{align}
\label{eq421}
u(x,t) = \chi_{\Omega_t(u)} - \chi_{(\Omega_t(u))^C} \hbox{ and } \Omega_t(u) = \Omega_t(v) \hbox{ for all } t \geq 0
\end{align}
(See \cite[Theorem 2.1]{Barles:1993gaba}). Note that $\Omega_t(u)$ satisfies \eqref{fixed} if $\partial\Omega_t$ is $C^2$. We will thus consider the set $\Omega_t$ obtained from the above viscosity solutions formulation as a weak notion of sets evolving by \eqref{fixed}.

\begin{rem}
\label{rem_uni}
Note that in Theorem~\ref{comparison}(1), we need $u^* \leq v_*$ at the initial time, so this theorem does not yield the uniqueness for discontinuous solutions. Indeed solutions of the form \eqref{eq421} may be non-unique due to the ``fattening" of the zero level set, see the discussion in \cite{Barles:1993gaba}, \cite{Evans1992}, \cite{Gig06} and \cite{soner1993} for the flow \eqref{vanilla}. We will show in section ~\ref{sec_uni} that our solutions are unique under the geometric constraint on the initial data.
\end{rem}

\begin{DEF} 
\label{def-sol}
A function $u: \oQ \to \R$ is \textit{a viscosity subsolution (supersolution)} of \eqref{main eq1} and \eqref{initial} if $u$ is a viscosity subsolution (supersolution) of \eqref{main eq2} and \eqref{initial} with continuous and bounded $\eta(t) = \lambda[{|\Omega_t(u)|}]$. A function $u$ is \textit{a viscosity solution} of \eqref{main eq1} and \eqref{initial} if $u$ is a viscosity solution of \eqref{main eq2} and \eqref{initial}.
\end{DEF}

\begin{rem} 
\label{rem_nonlocal}
For \eqref{main eq1} and \eqref{initial}, the comparison principle fails, and thus viscosity solutions theory cannot be directly applied. Indeed the well-posedness of \eqref{main eq1} and \eqref{initial} will be established later in section~\ref{sec_ene}.
\end{rem}

Next we introduce a regularization that is often used in free boundary problems (see e.g. \cite{Caffarelli:2005tk} and Lemma 3.1 in \cite{Kim:2002ta}). This is useful in our geometric analysis in sections~\ref{sec_uni} and \ref{sec_exi}.

\begin{lem}
\label{sup convolution_gen}
Consider a continuous function $l : [0,\infty) \rightarrow \R$ with  $L(t):= \int_0^t l(s) ds \leq A$ in $[0,T]$.
Let u be \textit{a viscosity supersolution} of \eqref{main eq2}. Then, the function
\begin{align*}
\tilde{u}(x,t) := \inf_{y \in \overline{B}_{A-L(t)}(x)} u(y,t),
\end{align*}
is \textit{a viscosity supersolution} of 
\begin{align}
\label{sup eq1}
\tilde{u}_t =  F(D\tilde{u},D^2\tilde{u},t) + l(t) |D\tilde{u}| \ \ \hbox{ in } Q_T.
\end{align}
Similarly, let u be \textit{a viscosity subsolution} of \eqref{main eq2}. Then, the function
\begin{align*}
\hat{u}(x,t) := \sup_{y \in \overline{B}_{A-L(t)}(x)}u(y,t)
\end{align*}
is \textit{a viscosity subsolution} of 
\begin{align}
\label{inf eq1}
\hat{u}_t =  F(D\hat{u},D^2\hat{u},t)  - l(t) |D\hat{u}| \ \ \hbox{ in } Q_T.
\end{align}
\end{lem}

\begin{proof}
Let us show that the function $\tilde{u}$ is a viscosity supersolution of \eqref{sup eq1}, the subsolution part can be proved with parallel arguments.
For simplicity we will only present the proof for the case $l(t)=c > 0$, in which case $T=A/c$. 

\medskip

Suppose a test function $\phi$ touches $\tilde{u}_*  $ from below at $(x_0,t_0) \in Q_T$. It holds that 
\begin{align}
\label{eqn:supg11}
\tilde{u}_*(x_0,t_0)-\phi(x_0,t_0)= 0 \hbox{ and } \tilde{u}_*(x,t)-\phi(x,t) \geq 0 \hbox{ in } \N_{\delta}(x_0,t_0):=B_\delta(x_0) \times (t_0 - \delta, t_0]
\end{align}
for some $\delta > 0$. From the construction of $\tilde{u}_*$, there exists $x_1 \in \R^n$ such that
\begin{align}
\label{eqn:supg12}
|x_1 - x_0| \leq A-c t_0 \hbox{ and } \tilde{u}_*(x_0,t_0) = u_*(x_1,t_0).
\end{align}

\medskip

If $D\phi(x_0, t_0) = 0$, then it suffices to show that
\begin{align}
\label{eqn:supg211}
\phi_t(x_0,t_0) \geq  F_*(D\phi(x_0,t_0), D^2\phi(x_0,t_0),t_0).
\end{align}
We choose the shifted test function $\psi(x,t):=\phi(x - x_1 + x_0,t)$ and claim that $\psi$ touches $u_*$ from below at $(x_1,t_0)$. As $c > 0$, \eqref{eqn:supg12} yields that
\begin{align}
\label{eqn:supg22}
|x_1 - x_0| \leq A - ct \hbox{ for all } t \in (t_0 - \delta, t_0].
\end{align}
From \eqref{eqn:supg11}, we have $u_*(x_1,t_0)-\psi(x_1,t_0) = 0$. From we have $u_*(x_1,t_0)-\psi(x_1,t_0) = 0$ again and \eqref{eqn:supg22}, it holds that
\begin{align}
u_*(x,t)-\psi(x,t)  \geq \tilde{u}_*(x - x_1 + x_0,t)-\phi(x - x_1 + x_0,t) \hbox{ for any } (x,t) \in \N_{\delta}(x_1,t_0)
\end{align}
which yields the claim. Since $u_*$ is a viscosity supersolution of \eqref{main eq2} we have the corresponding PDE inequality for $\psi$ at $(x_1,t_0)$, which translates to \eqref{eqn:supg211}.

\medskip

Next, we suppose that $D\phi(x_0, t_0) \neq 0$. If $|x_1 - x_0| < A-c t_0$, then $u(\cdot, t_0)$ is constant in a small neighborhood of $x_0$ in $\R^n$ and it holds that $D\phi(x_0, t_0) = 0$. Thus, we have $|x_1 - x_0| = A-c t_0$. We claim that the shifted test function $\psi(x,t):=\phi(x-(A-ct)\vec{n},t)$ touches $u_*$ from below at $(x_1,t_0)$ where $$\vec{n}:=\frac{x_1-x_0}{|x_1-x_0|}.$$
First, note that $x_1 - (A - c t_0) \vec{n} = x_0$ and thus $u_*(x_1,t_0)-\psi(x_1,t_0)= 0$. Furthermore, if we choose $\e =\frac{1}{2} \min\left\{ \delta, t_0 \right\}$, then
\begin{align}
\label{eqn:supg23}
(x-(A-ct)\vec{n},t) = (x-x_1+x_0 + c(t-t_0) \vec{n},t) \in \N_{\delta}(x_0, t_0) \ \ \hbox{ for all } (x,t) \in \N_{\e}(x_1, t_0).
\end{align}
\eqref{eqn:supg11} and \eqref{eqn:supg23} imply that 
\begin{align*}
u_*(x,t)-\psi(x,t)  \geq \tilde{u}_*(x-(A-ct)\vec{n},t)-\phi(x-(A-ct)\vec{n},t) \geq 0 \ \ \hbox{ for all } (x,t) \in \N_{\e}(x_1, t_0),
\end{align*}
which yields the claim.

\medskip

As described in the first case, since $u_*$ is a viscosity supersolution of \eqref{main eq2} we have the corresponding PDE inequality for $\psi$ at $(x_1,t_0)$, which translates to
\begin{align}
\label{sup11}
\phi_t(x_0,t_0) + c D\phi(x_0,t_0) \cdot \vec{n} \geq  F_*(D\phi(x_0,t_0), D^2\phi(x_0,t_0),t_0).
\end{align}
Since $D\phi(x_0,t_0) \neq 0$ and the level set $\{x \in \R^n : \phi(x,t_0) = \phi(x_0,t_0)\}$ touches $\partial \Omega_{t_0}(\tilde{u})$ from inside at $x_0$,  $-D\phi(x_0,t_0)$ is parallel to the outward normal $\vec{n}$ of $\partial \Omega_{t_0}(\tilde{u})$ at $x_0$. Therefore, \eqref{sup11} yields
\begin{align*}
\phi_t(x_0,t_0) \geq  F_*(D\phi(x_0,t_0), D^2\phi(x_0,t_0),t_0) +c |D\phi(x_0,t_0)|.
\end{align*}
Now we can conclude that the function $\tilde{u}$ is viscosity supersolution of \eqref{sup eq1}.

\medskip

In general, we choose the shifted test function $\psi(x,t):=\phi(x - x_1 + x_0,t)$ or $\phi(x-L(t) \vec{n},t)$ and apply the parallel arguments to conclude.
\end{proof}

The following lemma will be used in section 3 to ensure uniform continuity of $\Omega_t(u)$ over time in Hausdorff distance.

\begin{lem}\label{lem_holder}
Let $u$ be a bounded viscosity solution of $\eqref{main eq2}$ or $\eqref{main_eq22}$ given by the form \eqref{eq421}. Then the following holds for $0<\delta <\frac{1}{{\|\eta\|_\infty}}$:  If $B_{2\delta}(x_0) \subset (\Omega_{t_0}(u))^C$ (or $\Omega_{t_0}(u)$), then  $B_{\delta}(x_0)\subset (\Omega_t(u))^C$(or $\Omega_t(u)$) for $t_0\leq t\leq t_0+\frac{\delta^2}{n}$. 
	
\end{lem}

\begin{proof}
We will verify the case when $B_{2\delta}(x_0)$ lies outside of $\Omega_{t_0}(u)$, since the rest follows from a parallel barrier argument. Let us compare $u$ with a radial barrier $\phi$ defined by $$\phi := - \chi_{B_{r(t)} (x_0)} + \chi_{{B_{r(t)} (x_0)}^C},$$ where $r:\left[t_0,t_0+\frac{\delta^2}{n}\right) \rightarrow \R$ solves $r(t_0):=2\delta, r'(t) := -\frac{n-1}{\delta} - {\|\eta\|_\infty}$. By assumption $u^*(x,t_0) \leq \phi_*(x,t_0)$.
	
\medskip
	
Let us show that $\phi$ is a viscosity supersolution for $t_0\leq t\leq t_0+\frac{\delta^2}{n}$. Since $\phi$ is a radial function, the normal velocity on $\partial \Omega_t(\phi)$ is equal to $-r'(t)$, and the mean curvature on $\partial \Omega_t(\phi)$ is $- \frac{n-1}{r(t)}$. 
Moreover, we have
\begin{align}
r'(t) &= - \dfrac{n-1}{\delta} - {\|\eta\|_\infty} \geq - \dfrac{n}{\delta}.
\end{align}
Since $r(t_0)=2\delta$, it follows that $r(t) \geq \delta$ if $t_0\leq t\leq t_0+\frac{\delta^2}{n}$. Therefore, it holds that for $t_0\leq t\leq t_0+\frac{\delta^2}{n}$ 
\begin{align}
-r'(t) = \frac{n-1}{\delta} + {\|\eta\|_\infty} \geq \frac{n-1}{r(t)} + \eta(t)
\end{align}
and we conclude. Now by Theorem \ref{comparison}(1), $u^* \leq \phi_*$ for $t_0\leq t\leq t_0+\frac{\delta^2}{n}$ and thus $B_{\delta}(x_0+\delta\nu)$ lies outside of $\Omega_t(u)$ for $t_0\leq t\leq t_0+\frac{\delta^2}{n}$. 
\end{proof}

\section{Geometry of the Flow}
\label{sec_pre}

In this section we study geometric properties of evolution of \eqref{main eq2}, following a strong notion of star-shapedness, $\rr$. This property, introduced in \cite{Feldman:2014hb},  is useful for problems which satisfy the reflection comparison principle (See Theorem~\ref{reflection comparison} below).

\subsection{Geometric Properties}

\begin{DEF}\label{starshaped} 
A bounded set $\Omega$ in $\R^n$ is \textit{star-shaped with respect to a ball} $B_r(0)$ if for any point $y \in B_r(0)$, $\Omega$ is star-shaped with respect to $y$. Let 
$$
S_r:=\{\Omega:\textit{star-shaped with respect to } B_r(0)\} \hbox{ and } S_{r,R}:=S_r\cap\{\Omega:\Omega\subset B_R(0)\}.
$$
\end{DEF}

The following lemma is immediate from the interior and exterior cone properties of sets in $S_r$.

\begin{lem}
\label{star2}
For a continuously differentiable and bounded function $\phi : \R^n \rightarrow \R$, let us denote the positive set of $\phi$ by $\Omega(\phi)$. Let us assume that $\Omega(\phi)$ contains $B_r(0)$ and $D\phi \neq 0$ on $\partial \Omega(\phi)$. Then 
the set $\Omega(\phi)$ is in $S_r$ if and only if 
$$x \cdot \vec{n}_x = x \cdot \left(- \frac{D\phi}{|D\phi|}(x)\right) \geq r  \hbox{ for all } x\in \partial\Omega(\phi), $$
where $\vec{n}_x$ denotes the outward normal of $\partial \Omega(\phi)$ at $x$.

\end{lem}
\begin{figure}[h]
	\centering
	\begin{subfigure}[t]{0.3\textwidth}
		\includegraphics[width=\textwidth]{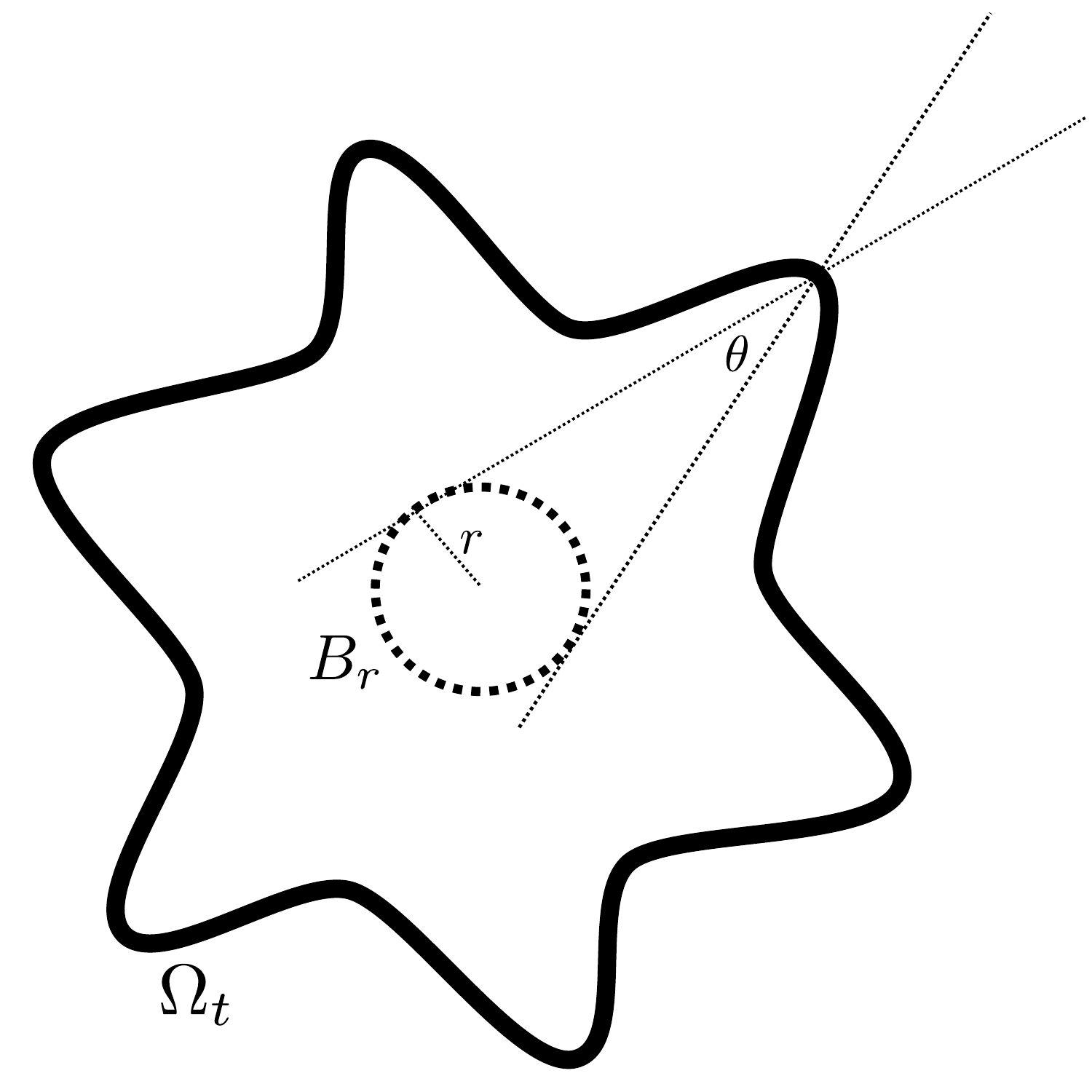}
		\caption{Star-shapedness}
	\end{subfigure}
	\hspace{2cm}
	\begin{subfigure}[t]{0.3\textwidth}
		\includegraphics[width=\textwidth]{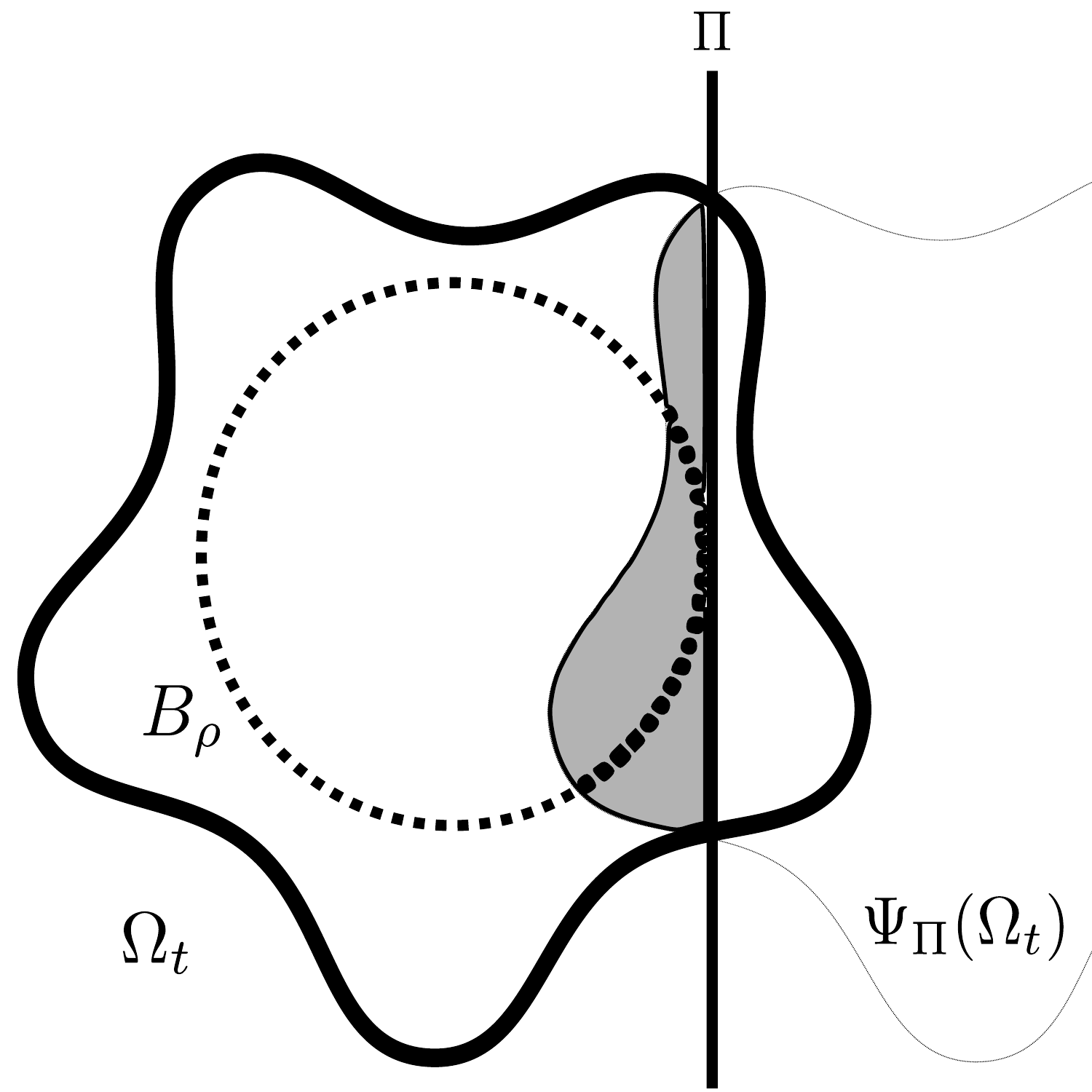}
		\caption{$\rho$-reflection}
	\end{subfigure}
	\caption{}
\end{figure}

Next we proceed to define the reflection property. For a hyperplane $\Pi = \Pi_{\nu}(s):=  \{x: x\cdot\nu = s\}$, let  $\Psi_{\Pi}$ denote the corresponding reflection, i.e.,
\begin{align}
\label{ref-fun}
\Psi_{\Pi(s)}(x) := x- 2 \langle x-s\nu, \nu \rangle \nu.
\end{align}
 
For $\Pi$ that doesn't contain the origin, we denote the half-spaces divided by $\Pi$ by $\Pi^+$ and $\Pi^-$, where $\Pi^-$ contains the origin.

\begin{DEF}
\label{rho-ref}\cite[Definition 10]{Feldman:2014hb}
 bounded, open set $\Omega$ has $\rr$ if 
\begin{itemize}
\item[(i)] $\Omega$ contains $\overline{B_\rho(0)}$ and
\item[(ii)] $\Omega$ satisfies, for all  $\nu \in S^{n-1}$ and all $s>\rho$,
$$\Psi_{\Pi_\nu(s)} (\Omega \cap \Pi^+_{\nu}(s)) \subset \Omega \cap \Pi^-_{\nu}(s).$$ 
\end{itemize}
\end{DEF}

The $\rr$ property can be viewed as a smallness condition on the Lipschitz norm distance between $\partial\Omega$ and the nearest ball (see the Appendix in \cite{Feldman:2014hb}). The following lemma states several properties and the relationship between the two concepts introduced above, $\rr$ and $S_r$ (See Figure 2, 3 and \cite[Figure 2]{Feldman:2014hb}). 

\begin{lem}\cite[Lemma 3, 9, 10, 24]{Feldman:2014hb} 
\label{star}
\begin{enumerate}
\item
For a bounded domain $\Omega$ containing $B_r(0)$, the followings are equivalent:

\noindent(i) $\Omega \in S_r$.

\noindent(ii) There exists $\e_0 = \e_0(r) >0$ such that
\begin{align}
\label{eqn_star}
\Omega \subset\subset \bigcap_{|z| \leq \delta \e} [(1+\e)\Omega +z] \hbox{ for all } 0<\e<\e_0 \hbox{ and } 0< \delta <r,
\end{align}
\noindent(iii) For all $x\in \Omega$, there is an interior cone to $\Omega$:
\begin{align}
\label{eqn_ic}
IC(x,r):=\left((x+C(-x,\theta_{x,r})) \cap C(x,\frac{\pi}{2} - \theta_{x,r} ) \right) \cup B_r(0) \subset \Omega \hbox{ for } |x| \geq r
\end{align}
where $C(x,\theta)$ is a cone in the direction $x$ with opening angle $\theta$ for $x \in \R^n$ and $\theta \in [0, \pi]$,
\begin{align}
\label{eqn:c}
C(x,\theta):=\{ y \mid \langle x,y \rangle > \cos \theta |x| |y| \} \hbox{ and } \theta_{x,r} := \arcsin \frac{r}{|x|} \in \left[0,\frac{\pi}{2}\right]. 
\end{align}

\begin{figure}[h]
	\centering
	\begin{subfigure}[t]{0.25\textwidth}
		\includegraphics[width=\textwidth]{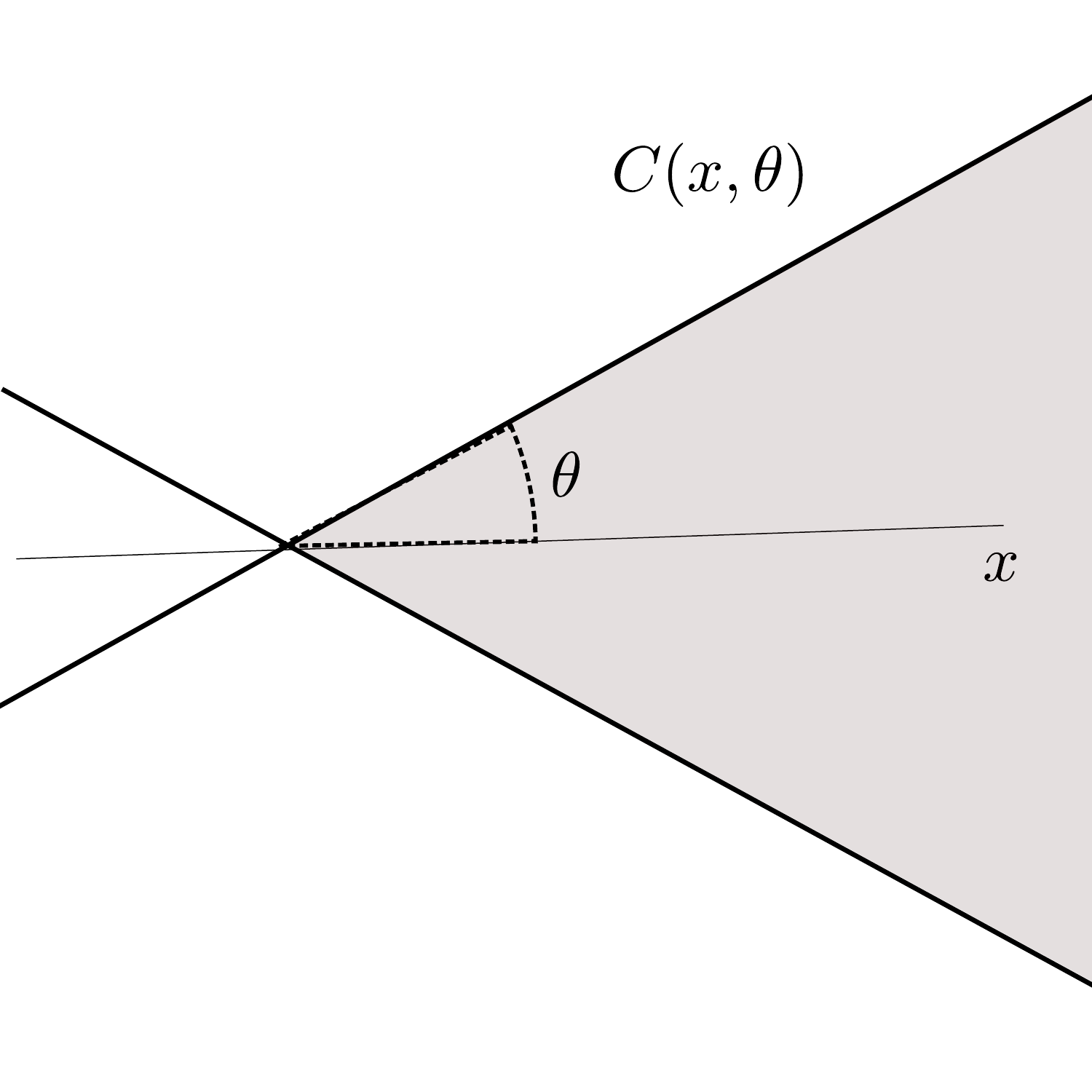}
		\caption{A cone in the direction $x$ with opening angle $\theta$}
	\end{subfigure}
	\hspace{2cm}
	\begin{subfigure}[t]{0.3\textwidth}
		\includegraphics[width=\textwidth]{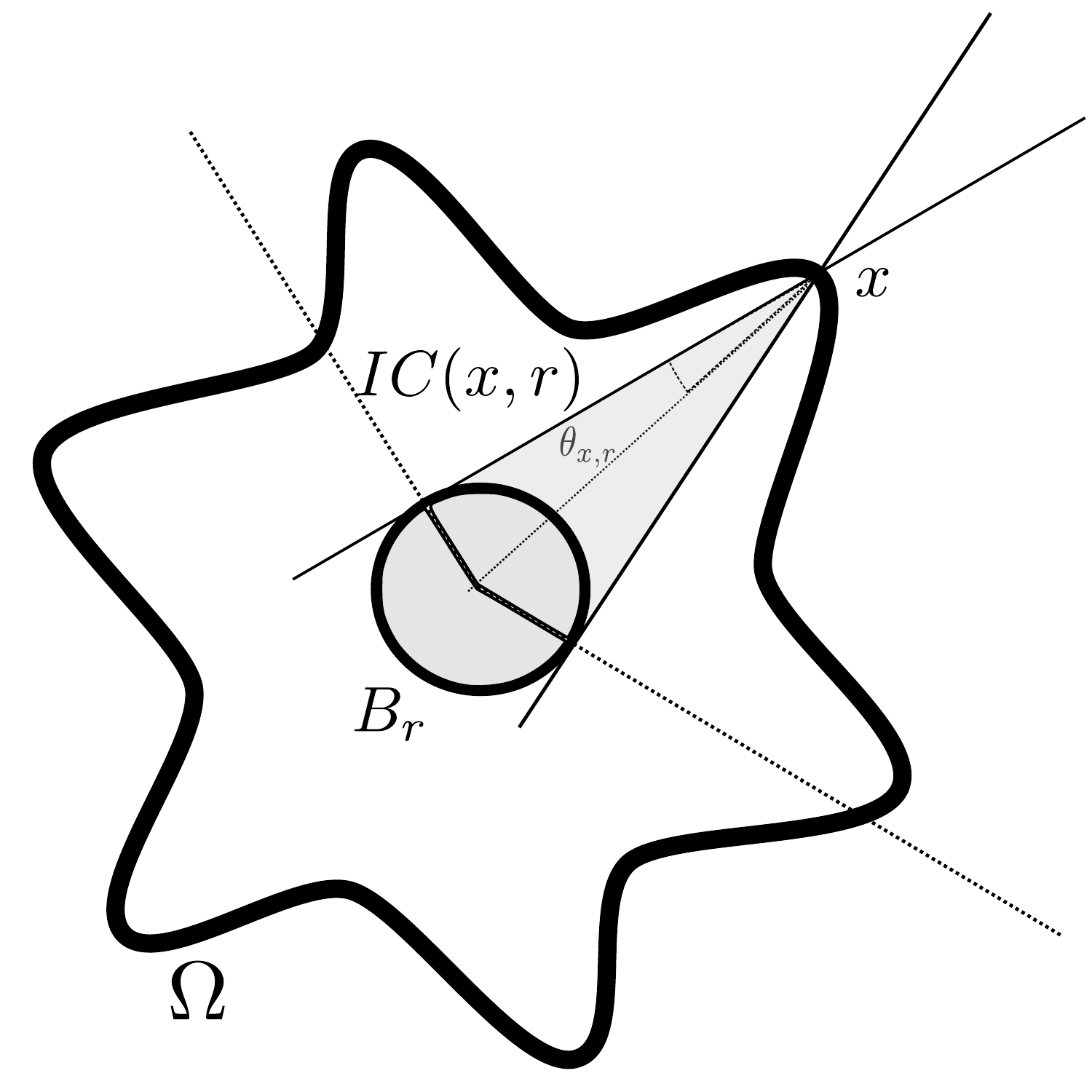}
		\caption{An interior cone to $\Omega$}
	\end{subfigure}
	\caption{}
\label{fig:ic}	
\end{figure}

\noindent (iv) For all $x\in  \Omega^C$, there is an exterior cone to $\Omega$:
\begin{align}
\label{eqn_ec}
EC(x,r):=x+C(x,\theta_{x,r}) \subset \Omega^C \hbox{ where } \theta_{x,r} = \arcsin \frac{r}{|x|} \in \left[0,\frac{\pi}{2}\right]. 
\end{align}
\item
Suppose that $\Omega$ has $\rr$.  Then $\Omega \in S_r$  with  
\begin{align}
\label{eqn_ra}
r=(\inf_{x\in \partial \Omega} |x|^2 - \rho^2 )^{1/2}. 
\end{align}
Moreover
\begin{align}
\label{eqn_4rho}
\sup_{x \in \partial \Omega} |x| - \inf_{x \in \partial \Omega} |x| \leq 4\rho.
\end{align}

\item
Suppose that $\Omega$ is in $S_{r,R}$. If  there exists $\rho>0$ such that $\overline{B_\rho(0)} \subset \Omega$ and $\rho^2 \geq 5(R^2 - r^2)$, then $\Omega$ has $\rr$.

\end{enumerate}
\end{lem}

\begin{thm}
\label{reflection comparison}
(Reflection Comparison)
Suppose that $\Omega_0$ has $\rr$. Let $u$ be a bounded viscosity solution of \eqref{main eq2} given by the form \eqref{eq421}. Let $\Pi$ be a hyperplane in $\R^n$ such that $\Pi \cap B_\rho(0)=\emptyset$. Then the reflected function $u(\Psi_{\Pi}(x),t)$ is also a bounded viscosity solution in $ \Pi^{-} \times (0,\infty)$. Moreover
\begin{align}\label{reflection}
\Psi_{\Pi}(\Omega_t \cap \Pi^{+}) \subset \Omega_t \cap \Pi^{-} \hbox{ for all } t>0 \hbox{ if true at } t=0.
\end{align}
\end{thm}
\begin{proof}

It is easy to see that $u(\Psi_{\Pi}(x),t)$ is also a viscosity solution of \eqref{main eq2} since $F$ is independent of $x$.

To show \eqref{reflection}, we will use the comparison principle in $\Pi^- \times [0,\infty)$.  To do so it is easier for us to consider a continuous version of $u$, i.e. let $\tilde{u}$ be the unique viscosity solution of \eqref{main eq2} with uniformly continuous initial data $\tilde{u}(x,0)$ defined by $\tilde{u}(x,0):= -\min\{  \sd(x, \Omega_0), 2R \},$
where $R$ is chosen large enough that $\Omega_0 \subset \subset B_R$. As $u$ is given by the form \eqref{eq421}, Theorem \ref{thm-invar} combined with the uniqueness implies that $\Omega_t(\tilde{u})$ is equal to $\Omega_t(u)$ for all $t>0$. 

\medskip

Note that Theorem~\ref{comparison}(2) implies that $\tilde{u}$ is uniformly continuous. As $\tilde{u}(\cdot, 0)$ is bounded in $\R^n$, we apply Theorem~\ref{comparison}(3) to conclude that $\tilde{u}$ is bounded in $\oQ$. Since $\tilde{u}(\Psi_{\Pi}(x),0) \leq \tilde{u}(x,0)$ in $\Pi^{-}$ and $\tilde{u}(\Psi_{\Pi}(x),0) = \tilde{u}(x,0)$ on $\Pi$, Theorem \ref{comparison}(1) applies to $\tilde{u}(x,t)$ and  $\tilde{u}(\Psi_{\Pi}(x),t)$ to yield $$\tilde{u}(\Psi_{\Pi}(x),t) \leq \tilde{u}(x,t)$$ for all $x \in \Pi^{-}$ and $t>0$. Therefore \eqref{reflection} follows.
\end{proof}

\begin{thm}
\label{rho zero bd}
Suppose that $\Omega_0$ has $\rr$. Let $u$ be a bounded viscosity solution of \eqref{main eq2} given by the form \eqref{eq421}. Let $I = [0,T)$ be the maximal interval satisfying  $\overline{B_\rho} \subset \Omega_t(u)$. Then, $\Omega_t(u)$ has $\rr$ for $t\in I$.
\end{thm}

\begin{proof}
From the definition of $\rr$, it is enough show that, for any unit vector $\nu$ in $\R^n$,
\begin{align}
\label{rho zero bd1}
\Psi_{\Pi_\nu(\rho)} (\Omega_t(u) \cap \Pi^+_{\nu}(\rho)) \subset \Omega_t(u) \cap \Pi^-_{\nu}(\rho) \hbox{ for } t\in I.
\end{align} 
Since $\Omega_0(u)$ has $\rr$, \eqref{rho zero bd1} holds at $t=0$, and we can conclude by Theorem~\ref{reflection comparison}.
\end{proof}

In the next section, we will show that  $\Omega_t(u)\in S_{r,R}$ in $[0,T]$ if it starts with some geometric restriction for the initial data. This leads to the following regularity of $\Omega_t(u)$ over time.

\begin{cor}\label{holder0}
Let $u$ be a bounded viscosity solution of $\eqref{main eq2}$ given by the form \eqref{eq421}. If $\Omega_t(u) \in S_{r, R}$ and $|\eta(t)| \leq K$ in $[0,T]$, then there exists $C=C(r,R,K,T)$ such that we have
\begin{align}
\label{eqn:1holder}
d_H(\partial \Omega_t(u), \partial \Omega_s(u)) \leq C(s-t)^{\frac{1}{2}} \hbox{ for all } 0 \leq t \leq s \leq T. 
\end{align}
\end{cor}

\begin{proof}
Choose $\delta \in \left( 0, \min \left\{ \frac{1}{K}, \frac{r}{2} \right\} \right)$ and $t \in [0,T]$. We claim that
\begin{align}
\label{eqn:holder11}
\sup_{x \in \partial \Omega_t(u)} d(x, \partial \Omega_s(u)) \leq \frac{2R \delta}{r} \hbox{ for all } s \in I:= \left[ t,  \min\left\{t+ \frac{\delta^2}{n}, T\right\} \right].
\end{align}
As $\Omega_t(u) \in S_{r, R}$, there exists $x_1=x_1(s) \in \partial \Omega_t(u)$ such that
\begin{align}
\label{eqn:holder12}
\sup_{x \in \partial \Omega_t(u)} d(x, \partial \Omega_s(u)) = d(x_1, \partial \Omega_s(u)) \hbox{ for } s \in I.
\end{align}
Let $y = \left(1 - \frac{2\delta}{r}\right)x$ and $z = \left(1 + \frac{2\delta}{r}\right)x$. From the interior and exterior cone properties in  Lemma~\ref{star}, it holds that
\begin{align*}
B_{2\delta}(y) \subset \Omega_t(u) \hbox{ and } B_{2\delta}(z) \subset \Omega_t(u)^C.
\end{align*}
As the assumption in Lemma~\ref{lem_holder} is satisfied, we conclude that $y \in \Omega_s(u)$ and $z \in \Omega_s(u)^C$ for all $s \in I$. As $\Omega_s(u) \in S_{r,R}$ in $I$, there exists $x_2 \in \partial \Omega_s(u)$ such that
\begin{align}
\label{eqn:holder14}
|x_1 - x_2| \leq \max \{ |x_1 - y|, |x_1 - z| \} \leq \frac{2R\delta}{r}.
\end{align}
\eqref{eqn:holder12} and \eqref{eqn:holder14} imply \eqref{eqn:holder11}. As $\Omega_s(u), \Omega_t(u) \in S_{r,R}$, we apply Lemma~\ref{lem:haus} to conclude \eqref{eqn:1holder}.
\end{proof}

\subsection{Preservation of the $\rr$ property} 
In this subsection, we suppose that there exists a viscosity solution $u$ of our original equation \eqref{main eq1} in the sense of Definition~\ref{def-sol}, and show the preservation of the $\rr$ property. As a consequence, star-shapedness of $\Omega_t(u)$ is preserved for all time. Existence of this solution will be given later in section~\ref{sec_ene} and \ref{sec_exi}. 

\begin{thm}
\label{pre-star}
Suppose that $\Omega_0$ has $\rr$. Assume that there exists a bounded viscosity solution $u$ given by the form \eqref{eq421} of \eqref{main eq1} and \eqref{initial}. Then $\Omega_t(u)$ has $\rr$ for all $t>0$. In particular there exists $r_1>0$ such that $\Omega_t$ is \textit{star-shaped with respect to a ball} $B_{r_1}(0)$ for all $t>0$.
\end{thm}

The proof of above theorem consists of Theorem~\ref{rho zero bd} and Lemma~\ref{Brho}. In Lemma~\ref{Brho}, we show that the maximal interval $I$ in Theorem~\ref{rho zero bd} is $[0,\infty)$.

\begin{lem}
\label{Brho}
Let $u$ and $\Omega_0$ be as given in above theorem. Then, there exists $a>0$ depending on $\Omega_0$ such that $B_{(1+a)\rho} \subset \Omega_t(u)$ for all $t>0$.
\end{lem}

\begin{proof}
Since $\Omega_0$ has $\rr$, $B_{(1+a)\rho} \subset \Omega_0$ for some $a>0$. Due to Assumption~\ref{hyp_a} and the continuity of $\lambda$, one can choose a small $a>0$ such that 
\begin{align}
\label{eqn_Brho0}
\lambda[|\Omega|] > \frac{n-1}{\rho} \hbox{ for sets contained in } B_{(5+a)\rho}. 
\end{align}

\medskip

Suppose that $B_{(1+a)\rho}$ is not contained in $\Omega_{t_*}(u)$ at some $t_* >0$. Then, there exists $t_0 \in (0,t_*)$ such that $\partial \Omega_t(u)$ touches from outside $\partial B_{(1+a)\rho}$ at $(x_0,t_0)$ for the first time. Then, by \eqref{eqn_4rho} in Lemma \ref{star}, we have $$\sup_{x \in \partial \Omega_{t_0}(u)}|x| \leq 4\rho + \inf_{x \in \partial \Omega_{t_0}(u)}|x| = (5+a)\rho,$$ and thus $\Omega_{t_0}(u)$ is contained in $B_{(5+a)\rho}$. Hence it follows from \eqref{eqn_Brho0} that 
\begin{align}
\label{Brho-3}
\lambda[|\Omega_{t_0}(u)|] >  \frac{n-1}{\rho} > H[B_{(1+a)\rho}].
\end{align}
where $H[B_{(1+a)\rho}]$ is the mean curvature of $\partial B_{(1+a)\rho}$.

\medskip

Consider $\phi(x):= - \left(\dfrac{|x|}{(1+a) \rho}\right)^2$. Note that \eqref{Brho-3} and $|x_0| = (1+a)\rho$ yield
\begin{align}
\label{Brho-4}
\nabla \cdot \left(\frac{D\phi}{|D\phi|}\right)(x_0) + \lambda[|\Omega_{t_0}(u)|] = - H[B_{(1+a)\rho}] + \lambda[|\Omega_{t_0}(u)|] > 0
\end{align} 
Hence $\psi(x,t):=\phi(x)$ is a strict subsolution of \eqref{main eq2}  with $\eta(t)=\lambda[|\Omega_t(u)|]$ in a small neighborhood of $(x_0,t_0)$. 

\medskip

On the other hand, we have $\psi \leq 0$ in $Q$ and $\psi \leq -1$ outside of $B_{(1+a)\rho}$. Recall that $u$ is given by the form \eqref{eq421}. As $\Omega_{t_0}(u)$ touches $B_{(1+a)\rho}$ at $(x_0,t_0)$ for the first time, $\psi$ touches $u_*$ from below at $(x_0, t_0)$ and we have
\begin{align}
\nabla \cdot \left(\frac{D\psi}{|D\psi|}\right)(x_0, t_0) + \lambda[|\Omega_{t_0}(u)|] \leq \psi_t(x_0, t_0) = 0 
\end{align}
and this contradicts to \eqref{Brho-4}.
\end{proof}

{\bf Proof of Theorem \ref{pre-star}:}
First note that $\Omega_t(u)$ has $\rr$ thanks to  Lemma~\ref{Brho} and Theorem~\ref{rho zero bd} applied to $u(x,t)$ and $\eta(t) = \lambda[|\Omega_t(u)|]$.  Moreover from \eqref{eqn_ra} in Lemma \ref{star}, $\Omega_t(u) \in S_r$ for 
\begin{align}
\label{eqn_pre_star}
r=\left(\inf_{x\in \partial \Omega} |x|^2 - \rho^2 \right)^{1/2} \geq r_1:=\rho(a^2 + 2a)^{1/2}.
\end{align}
Hence $\Omega_t(u)$ is \textit{star-shaped with respect to a ball} $B_{r_1}$ for all $t>0$.
\hfill$\Box$

\medskip

A particular consequence of Theorem~\ref{pre-star} is that $\partial\Omega_t(u)$ is a locally Lipschitz graph. This, in combination with Lemma~\ref{regularity}, yields that the evolution is indeed $C^{1,1}$:

\begin{cor}\label{regularity2}
Let $u$ and $\Omega_0$ be as in Theorem~\ref{pre-star}. Then $\Omega_t(u)$ has $C^{1,1}$-boundary for all $t>0$. In particular its principal curvatures are bounded by $O(1+1/\sqrt{t})$. 
\end{cor}

Next we note that, with the sublinear growth condition imposed on $\lambda$, $\Omega_t(u)$ is uniformly bounded in finite time.

\begin{lem}\label{bound00}
Let $u$ and $\Omega_0$ be as given in Theorem~\ref{pre-star}. Then, there exists $R_1=R_1(T)>0$ such that $\Omega_t(u) \subset B_{R_1}$ in $[0,T]$.
\end{lem}

\begin{proof}
By Assumption~\ref{hyp_a}, there exists a constant $C_1>0$ such that $\lambda[|B_R|] \leq C_1R$ for all $R\geq\rho$. Since $\Omega_0$ is bounded, there exists $\hat{R} > \rho$ such that $\Omega_0 \subset\subset B_{\hat{R}}$. Let us compare $u$ with a radial barrier $\phi : \oQ \to \R$ defined by $$\phi(x,t) :=  \chi_{B_{r(t)}}(x) - \chi_{{B^C_{r(t)}}}(x) \hbox{ for } (x,t) \in \oQ,$$ where $r:[0,T] \rightarrow \R$ is defined by $r(t) := \hat{R} e^{(C_1+1)t}$. Note that $\Omega_0(u) \subset \subset \Omega_0(\phi)$, and $r'(t) = (C_1 + 1) r(t)$.

\medskip

Choose $\e \in (0, \hat{R} C_1^{-1})$ and let us show that $\Omega_t(u) \subset  B_{r(t) + \e}$ for all time. Suppose it is false, then we have
\begin{align}
\label{bound00_11}
t_0:= \sup\{ t: \Omega_s(u) \subset B_{r(s) + \e} \hbox{ for } 0\leq s\leq t\} < +\infty.
\end{align}
By Corollary~\ref{holder0}, $\partial\Omega_t(u)$ evolves continuously in time and thus
\begin{align}
\label{bound00_111}
\partial \Omega_s(u) \cap \partial B_{r(s) + \e} \neq \emptyset.
\end{align}
Combining \eqref{bound00_11} with Lemma~\ref{Brho}, we have $|B_\rho| \leq |\Omega_t(u)| \leq |B_{r(t) + \e}|$ in $[0,t_0]$. Furthermore, as $ r(t) \geq \hat{R} > \rho$, it holds that
\begin{align}
\lambda[|\Omega_t(u)|] \leq C_1(r(t) + \e) \leq r'(t) + \frac{n-1}{r(t)}.
\end{align}
Therefore, $\phi$ is a viscosity supersolution of \eqref{main eq2} with $\eta(t)=\lambda[|\Omega_t(u)|]$ in $[0,t_0]$. Note that $u^* \leq \phi_*$ at $t=0$. From Theorem \ref{comparison}(1) we have $u^* \leq \phi_*$ in $[0, t_0)$ and thus
\begin{align}
\overline{\Omega}_t(u) \subset B_{r(t)} \hbox{ in } [0,t_0).
\end{align}
By Corollary~\ref{holder0} again, $\partial\Omega_t(u)$ evolves continuously in time and thus we have $\Omega_{t}(u) \subset B_{r(t_0)}$ in $[0,t_0]$, which contradicts \eqref{bound00_111}.

As a consequence, we conclude that 
\begin{align}
\label{eqn_bound00}
\Omega_t(u) \subset B_{R_1} \hbox{ where } R_1(T):= \hat{R}e^{(C_1+1) T} + \e 
\end{align}
in $[0,T]$.
\end{proof}

\medskip

We finish this section with some properties of our solutions that will be used later. The following corollary holds due to the fact that $\Omega_0$ has $\rr$ and therefore for small $\e>0$ the sets $\Omega_0^{\e,+}:=(1+\e)\Omega_0$ and $\Omega_0^{\e,-}:=(1+\e)^{-1}\Omega_0$ satisfy $\rho (1+O(\e))$-reflection.

\begin{cor}\label{pre-star2}
Let $u$, $\Omega_0$ and $r_1$ be as given in Theorem~\ref{pre-star} and $R_1$ as given in Lemma~\ref{bound00}. Then for sufficiently small $\e>0$ viscosity solutions $u^\pm$ of \eqref{main eq1} starting from $\Omega_0^{\e,\pm}$ have their positive sets $\Omega_t(u^\pm)$ in $S_{r_1 -O(\e),R_1 + O(\e)}$ in $[0,T]$.
\end{cor}

\begin{lem}
\label{cor_holder}
Let $u$, $\Omega_0$ and $r_1$ be as given in Theorem~\ref{pre-star} and $R_1$ as given in Lemma~\ref{bound00}. Then, there exists positive constants $\tilde{K}_{\infty} = \tilde{K}_{\infty}(r_1,R_1,T)$ and ${\tilde{K}_{1/2}}~=~{\tilde{K}_{1/2}}(r_1,R_1,T)$ such that the following holds for all $t,s$ in $[0,T]:$
\begin{align}
\label{eqn:1hor}
\Big| \lambda[|\Omega_{t}(u)|] - \lambda[|\Omega_{s}(u)|] \Big| \leq {\tilde{K}_{1/2}} |t-s|^{\frac{1}{2}}
\end{align}
and
\begin{align*}
\Big| \lambda[|\Omega_{t}(u)|] \Big| \leq \tilde{K}_{\infty}.
\end{align*}
\end{lem}

\begin{proof}	
From Lemma~\ref{Brho}~and~\ref{bound00}, $|\Omega_t|$  is bounded away from zero and infinity, and thus $\lambda$ is bounded. Next, by the Lipschitz continuity of $\lambda$ and the last inequality of \eqref{eqn:1cpt} in Lemma~\ref{lem-cpt}, there exists $C_1(r_1,R_1,T)$ such that
\begin{align*}
\Big| \lambda[|\Omega_t(u)|] - \lambda[|\Omega_s(u)|] \Big| \leq  C_1 d_H(\Omega_t(u), \Omega_s(u)) \hbox{ for } t,s \in [0,T].
\end{align*}
From the above inequality and H{\"o}lder continuity in Corollary~\ref{holder0}, we conclude \eqref{eqn:1hor}.
\end{proof}

Finally, let us show Lipschitz continuity of $|\Omega_t|$ in time for the later purpose in Lemma~\ref{lem_uni_1}.
\begin{lem}\label{lem_lip}
Let $u$, $\Omega_0$ and $r_1$ be as given in Theorem~\ref{pre-star}, $R_1$ as given in Lemma~\ref{bound00}, and $\tilde{K}_\infty$ as given in Lemma~\ref{cor_holder}. Then there exists $C=C(r_1,R_1,\tilde{K}_\infty)$ such that we have
\begin{align}
\label{eqn_lem_lip}
 \left| |\Omega_t(u)| - |\Omega_s(u)| \right| \leq C\left(1+\frac{1}{\sqrt{t}}\right)|s-t|\hbox{ for }0 \leq t \leq s \leq T.
\end{align}
\end{lem}

\begin{proof}
First, by Corollary~\ref{regularity2}, all principal curvatures are bounded by $M(t) := C_1(1+1/\sqrt{t})$ for some constant $C_1=C_1(r_1,R_1,\tilde{K}_\infty)$. 
Thus, there exist interior and exterior balls of radius $M(t)^{-1}$ on each point of $\partial \Omega_t(u)$ for all $t>0$. As described in Corollary~\ref{holder0}, we apply Lemma~\ref{lem_holder} in these balls to conclude that
\begin{align*}
d_H(\partial \Omega_t(u), \partial \Omega_s(u)) \leq C_2\left(1+\frac{1}{\sqrt{t}}\right)|s-t|\hbox{ for }0 \leq t \leq s \leq T.
\end{align*}
for some $C_2 = C_2(r_1,R_1, \tilde{K}_\infty)$.
Recall from the first and last inequalities of \eqref{eqn:1cpt} in Lemma~\ref{lem-cpt} that the volume difference is bounded by the Hausdorff distance. Thus, we conclude that there exists $C=C(r_1,R_1,\tilde{K}_\infty)$ satisfying \eqref{eqn_lem_lip}.
\end{proof}

\section{Uniqueness of the Flow} 
\label{sec_uni}

In this section, we show the uniqueness for solutions of \eqref{main eq1} and \eqref{main eq2} with given initial data \eqref{initial}. As pointed out in Remark~\ref{rem_uni}, the comparison principle (Theorem 2.2) does not deliver the uniqueness for a discontinuous viscosity solution, due to the possible fattening phenomena of level sets. We show that our flow \eqref{main} can be uniquely determined when the initial data has $\rr$. We follow the argument of \cite{BCCN09}, where the uniqueness result is shown for convex evolution of volume-preserving flow.  

\medskip

In section~\ref{sec_uni_1}, we show the short-time uniqueness result for  \eqref{main eq2} in Theorem~\ref{thm-uni-2} for a star-shaped initial data $\Omega_0$. We define appropriate convolutions to perturb solutions (see Definition~\ref{def_cp}) and show that our perturbation preserves sub- and supersolution properties for \eqref{main eq2}. These perturbations are more delicate than those used in \cite{Gig06} due to the presence of the time-dependent forcing $\eta$. We use these perturbations to obtain the uniqueness results. At this point, it is crucial to find a uniform interval $[0,t_1]$ where these convolutions are well defined in this interval (see Lemma~\ref{lem_cp_1}). It remains open whether the flow \eqref{main eq2} stays unique beyond the interval.

\medskip

In section~\ref{sec_uni_2}, we show the global-time uniqueness for \eqref{main eq1}  when its initial data has $\rr$ (see Theorem~\ref{cor-uni-2}). Here we know that any evolution, if exists, preserves the $\rr$ property, which we use to iterate the short-time uniqueness result from the previous subsection. The key step is to estimate the difference between $\lambda[|\Omega_t(u)|]$ and $\lambda[|\Omega_t(v)|]$ for two possible solutions (see Lemma~\ref{lem_uni_1}).

\subsection{Short-time uniqueness of (2.1)}
\label{sec_uni_1}

\begin{DEF}
\label{def_reg}
\cite[Definition 2.1]{Barles:1993gaba} For a function $u : \oQ \to \R$ and $t \geq 0$, we say that $\Omega_t(u)= \{u(\cdot,t)>0\}$ is {\it  regular} if the closure of $\Omega_t(u)$ is $\{  x \in \R^n : u(x,t) \geq 0\}$, and the interior of $\{  x \in \R^n : u(x,t) \geq 0\}$ is $\Omega_t(u)$.
\end{DEF}

Note that for $t \geq 0$, if $\Omega_t(u)$ is regular, then the interface $\{x \in \R^n : u(x,t) = 0\}$ has an empty interior. 

\begin{lem}
\label{lem_cp_3}
\cite[Theorem 2.1]{Barles:1993gaba}
Let $u : \oQ \to \R$ be a viscosity solution of \eqref{main eq2} and \eqref{initial}. Then, $ \Omega_t(u)$ is regular for all $t \geq 0$ if and only if there exists a unique solution in $\oQ$ of \eqref{main eq2} with initial data $u(x,0) = u_0(x) := \chi_{\Omega_0} - \chi_{\Omega_0^C}$.
\end{lem}

Recall from section~\ref{sec_vis} that 
\begin{align}\label{upper_bd}
K_\infty :=\| \eta \|_{L^\infty([0,\infty))}.
\end{align}  We define $t_1=t_1(r,K_\infty)$ by
\begin{align}
\label{time_1}
t_1 := \frac{r}{10K_\infty} 
\end{align}
and we will show the following theorem in this section.

\begin{thm}\label{thm-uni-2}
Suppose that the initial set $\Omega_0$ is in $S_{r}$. Then, there is exactly one bounded viscosity solution $u$ of \eqref{main eq2} and \eqref{initial} in $[0,t_1]$ where $t_1$ is given in \eqref{time_1}. Moreover, $\Omega_t(u)$ is regular in $[0,t_1]$.
\end{thm}

We begin the proof with some definitions.

\begin{DEF}\label{def_cp}
For $\e, r>0$ and $L: [0, +\infty)\to \R$, let us define a maximal time $T_1=T_1(\e,r,L)$ by
\begin{align}\label{time}
T_1 := \sup \{ s >0 : L(t) < r\e/2 \hbox{ for all } t \in [0,s] \};
\end{align}
\begin{align*}
{\underline{u}}(x,t;\e,r,L) := \left. \inf \left\{  u\left(\frac{y}{1+\e},\frac{t}{(1+\e)^2}\right) \,\right| \, \hbox{ } y \in \overline{B_{r\e/2 -  L(t)}(x)} \right\};
\end{align*}
and
\begin{align*}
{\overline{u}}(x,t;\e,r,L) := \left. \sup \left\{  u\left(\frac{y}{1-\e},\frac{t}{(1-\e)^2}\right) \,\right| \, \hbox{ } y \in \overline{B_{r\e/2 - L(t)}(x)} \right\}
\end{align*}
\end{DEF}

\begin{lem} 

\label{lem_cp_0}
Let $u$ be a bounded viscosity solution of \eqref{main eq2} and \eqref{initial} with forcing $\eta$ and $\Omega_0 \in S_{r}$, and let $\eta_\e(t):= (1+\e)^{-1} \eta(t/(1+\e)^2)$. Let $\underline{u}$ and $\overline{u}$ be as given above with $L\in C^1([0,\infty))$.  Then the following holds in $(0,T_1)$ in the sense of viscosity solutions:
\begin{align}
\label{eq:cp_0}
\dfrac{{\underline{u}}_t}{|D{\underline{u}}|}(x,t) \geq  \nabla \cdot \left(\dfrac{D{\underline{u}}}{|D{\underline{u}}|}\right)(x,t) + \eta_{\e}(t) + L'(t)
\end{align}
and
\begin{align}
\label{eq:cp_01}
\dfrac{{\overline{u}}_t}{|D{\overline{u}}|}(x,t) \leq  \nabla \cdot \left(\dfrac{D{\overline{u}}}{|D{\overline{u}}|}\right)(x,t) + \eta_{-\e}(t) - L'(t).
\end{align}
Moreover, if $\e \leq \e_0(r)$ for $\e_0(r)$ given in \eqref{eqn_star}, we have 
\begin{align}
\label{eq:cp_1}
\Omega_0(\overline{u}) \subset\subset \Omega_0(u) \subset\subset \Omega_0(\underline{u}). 
\end{align}
\end{lem}

\begin{proof}
First, let us denote $v(x,t):=u\left(\frac{x}{1+\e},\frac{t}{(1+\e)^2}\right)$. Then, $v$ is a viscosity solution of 
\begin{align*}
\dfrac{v_t}{|Dv|}(x,t) =  \nabla \cdot \left(\dfrac{Dv}{|Dv|}\right)(x,t) +  \eta_\e(t).
\end{align*}
and thus Lemma~\ref{sup convolution_gen} implies \eqref{eq:cp_0}. Parallel arguments holds for $\overline{u}$.
	
\medskip

On the other hand, if $\Omega_0(u)$ is in $S_{r}$ then  Lemma~\ref{star} yields, for all $\e \leq \e_0(r)$,
\begin{align}
\label{eqn_set_inf}
\Omega_0(u) \subset\subset \bigcap_{|z| \leq r \e/2} [(1+\e)\Omega_0(u) +z] = \Omega_0({\underline{u}}),
\end{align}
and
\begin{align}
\label{eqn_set_sup}
\Omega_0({\overline{u}}) =  \bigcup_{|z| \leq r \e/2} [(1-\e)\Omega_0(u) +z]  \subset \subset \Omega_0(u).
\end{align}
\end{proof}

\begin{lem}
\label{lem_cp_1}
Let $\eta$ and $\eta_\e$ be as given in Lemma~\ref{lem_cp_0}, and let $t_1=r/10K_{\infty}$ be as given in \eqref{time_1}.  Then for the choice of $L(t) =\int_0^t -\eta_\e(s) +\eta(s) ds$ or $L(t) = \int_0^t \eta_{-\e}(s) -\eta(s)ds$ and for $0<\e\leq 1/4$, we have 
$$T_1=T_1(\e,r,L) \geq t_1 \hbox{ for } 0<\e<1/4.$$
\end{lem}

\begin{proof}
1. First, let us choose $L(t) =\int_0^t -\eta_\e(s) +\eta(s) ds$ and estimate the function $L$ by the change of variables.
\begin{align*}
L(t) &= \int_{0}^{t} \eta(s) - \frac{1}{1+\e} \eta\left(\frac{s}{(1+\e)^2}\right) ds,\\
&= \int_{0}^{t} \eta(s) ds - (1+\e) \int_{0}^{ \frac{t}{(1+\e)^2}}  \eta(s) ds,\\
&= \int_{\frac{t}{(1+\e)^2}}^{t} \eta(s) ds - \e \int_{0}^{\frac{t}{(1+\e)^2} }  \eta(s) ds.
\end{align*}
	
Therefore, we conclude that for $\e\in (0,1/4)$
\begin{align}
\label{eqn_lem_cp}
|L(t)| 	&\leq K_\infty  t \left( \frac{\e^2 + 2\e}{(1+\e)^2} \right) + K_\infty \e t < 5K_\infty \e t.
\end{align}
	
\medskip
	
2. Similarly, let us choose 	$L(t) = \int_0^t \eta_{-\e}(s) -\eta(s)ds$, then  for $\e\in (0, 1/4)$
\begin{align*}
|L(t)| 	&= \Big| \int^{\frac{t}{(1-\e)^2}}_{t} \eta(s) ds - \e \int_{0}^{\frac{t}{(1-\e)^2} }  \eta(s) ds \Big|,\\
&\leq K_\infty  t \left( \frac{2\e - \e^2}{(1-\e)^2}\right) + K_\infty \e t \frac{1}{(1-\e)^2} < 5 K_\infty \e t .
\end{align*}

\medskip
	
3. By definition of $T_1$ we have $L(T_1) = r\e/2$. Thus  $5K_{\infty} \e t_1= r\e/2 = L(T_1) < 5 K_\infty \e  T_1 $.
\end{proof}

Lemma~\ref{lem_cp_0} and Lemma~\ref{lem_cp_1} imply the following.

\begin{lem}
\label{lem_cp_2} 
Let $u$ and $\Omega_0$ be as given in Lemma~\ref{lem_cp_0} and let $0<\e\leq \e_0(r)$. For $t_1$  given in \eqref{time_1}, ${\underline{u}}$ with the choice of  $L(t) = \int_0^t -\eta_{\e} + \eta$ is a viscosity supersolution of \eqref{main eq2} in $(0,t_1]$. Similarly, ${\overline{u}}$ with $L(t) = \int_0^t \eta_{-\e} - \eta$ is a  subsolution of \eqref{main eq2}  in $(0,t_1]$. Moreover it holds that $\overline{u} \leq u \leq \underline{u}$ in $[0,t_1]$.
\end{lem}

\begin{proof}
By Lemma~\ref{lem_cp_1}, ${\underline{u}}$ and $\overline{u}$ are well-defined in $[0,t_1]$. So, we could apply Lemma~\ref{lem_cp_0} and comparison principle in Theorem~\ref{comparison}(1) for \eqref{main eq2}  in $[0,t_1]$ to  conclude. 
\end{proof}

{\bf Proof of Theorem \ref{thm-uni-2}:} 
Suppose that $u$ and $v$ are two bounded solutions of \eqref{main eq2} and $u(\cdot,0) = v(\cdot,0)$ in $\R^n$. Let us construct ${\underline{u}}$ and $\overline{u}$ as in Lemma~\ref{lem_cp_2}. As $\Omega_0(\overline{u}) \subset\subset \Omega_0(v) = \Omega_0(u) \subset\subset \Omega_0(\underline{u})$ from \eqref{eq:cp_1}, we have $\overline{u}^*(\cdot, 0) \leq v_*(\cdot, 0)$ and $v^*(\cdot, 0) \leq \underline{u}_*(\cdot, 0)$ in $\R^n$. By Lemma~\ref{lem_cp_2} and the comparison principle in Theorem~\ref{comparison}(1), it holds that $\overline{u} \leq v \leq \underline{u}$ in $[0,t_1]$. Sending $\e$ to zero, we conclude that $u = v$ in $[0,t_1]$.
\hfill$\Box$

\medskip

Lastly, for the next subsection let us state the following lemma.

\begin{lem}
\label{lem_cp_3}
Let $u$ and $\Omega_0$ be as given in Lemma~\ref{lem_cp_0}. Then for $0<\e\leq \e_0(r)$ and $0\leq t\leq t_1$ we have  
$$(1-\e) \Omega_{t/(1-\e)^2}(u) \subset  \Omega_t(u) \subset (1+\e) \Omega_{t/(1+\e)^2}(u).$$
where $t_1$ is given \eqref{time_1}.
\end{lem}	
	
\begin{proof}
Lemma~\ref{lem_cp_2} implies that $\Omega_t(\overline{u}) \subset \Omega_t(u) \subset \Omega_t(\underline{u})$ in $[0,t_1]$. Moreover we have, by definition,
\begin{align*}
(1-\e) \Omega_{t/(1-\e)^2}(u) &\subset \Omega_t(\overline{u}), \hbox{ and } 
\Omega_t(\underline{u}) \subset (1+\e) \Omega_{t/(1+\e)^2}(u).  
\end{align*}
\end{proof}

\subsection{Uniqueness of mean curvature flows of forcing}
\label{sec_uni_2}

In this subsection, we show the uniqueness of our original equation $\eqref{main eq1}$. Here is the main theorem of this subsection.

\begin{thm}
\label{cor-uni-2}
Suppose that $\Omega_0$ has $\rr$. Then, there exists at most one bounded viscosity solution of \eqref{main eq1} and \eqref{initial}.
\end{thm}
	
Let $u$ and $v$ be two bounded viscosity solutions of \eqref{main eq1} and \eqref{initial}, and let $\eta(t;u):=\lambda[|\Omega_t(u)|]$ and $\eta(t;v):=\lambda[|\Omega_t(v)|]$. Fix $T>0$. Recall from Theorem~\ref{pre-star} and Lemma~\ref{bound00} that both $\Omega_t(u)$ and $\Omega_t(v)$ are in $S_{r_1,R_1}$ in $[0,T]$ where $r_1$ and $R_1$ are given in \eqref{eqn_pre_star} and \eqref{eqn_bound00}, respectively. From Lemma~\ref{cor_holder} that there exists a uniform bound of $\eta(t;u)$ and $\eta(t;v)$ in $[0,T]$,
\begin{align}\label{constant_K}
\tilde{K}_{\infty}:= \||\eta(t;u)|+|\eta(t;v)|\|_{L^\infty([0,T])}<\infty.
\end{align}

Recall $\eta_\e(t):= (1+\e)^{-1} \eta(t/(1+\e)^2)$ and define 
\begin{align}
\label{l1l2}
L_1(t) := \int_0^t -\eta_\e(s;u) +\eta(s;v)ds \hbox{ and } L_2(t) := \int_0^t \eta_{-\e}(s;u) -\eta(s;v) ds 
\end{align}

\begin{DEF}
For $\e \in  (0,\frac{1}{4})$, let us define 
\begin{align}
\label{time_t}
\tilde{T}_1 = \tilde{T}_1(\e, r_1, L_1, L_2) := \sup \left\{ s \in (0,T] : L_1(t) , L_2(t) < \frac{r_1\e}{2} \hbox{ for all } t \in [0,s] \right\}
\end{align}
where $r_1$ is given in \eqref{eqn_pre_star}. Remind that $r_1$ is chosen so that $\Omega_t(u)$ and $\Omega_t(v)$ are in $S_{r_1,R_1}$ for all $t \in [0,T]$.
\end{DEF}

Let $\underline{u}=\underline{u}(\cdot;\e,r_1,L_1)$ and $\overline{u}= \overline{u}(\cdot;\e,r_1,L_2)$ be as given  in Definition~\ref{def_cp}. 
The construction of $L_1$ and $L_2$ and Lemma~\ref{lem_cp_0} readily yields the following lemma.

\begin{lem}
\label{lem_uni_0}
${\underline{u}}$ and ${\overline{u}}$ are a viscosity supersolution, and subsolution, respectively, of \eqref{main eq2} with $\eta = \eta(;v)$  in $(0,\tilde{T}_1)$. Moreover, it holds that $\overline{u} \leq v \leq \underline{u}$ in $[0,\tilde{T}_1]$. Here, $\tilde{T}_1$ is given in \eqref{time_t}.
\end{lem}

\begin{lem}\label{lem_uni_1}
There exists $t_2>0$ such that for  any $\e \in (0,\frac{1}{4})$, 
\begin{align}
\label{time_t2}
\tilde{T}_1=\tilde{T}_1(\e, r_1, L_1, L_2) >t_2 
\end{align}
where $\tilde{T}_1$ is given in \eqref{time_t}.
\end{lem}
	
\begin{proof}
Let $t_1(r_1,\tilde{K}_\infty) =\frac{r_1}{5\tilde{K}_\infty}$ be as given in \eqref{time_1}. If $\tilde{T}_1\geq t_1$ for all $\e \in (0,\frac{1}{4})$, we take $t_2=t_1$. If $\tilde{T}_1< t_1$ for some $\e \in (0,\frac{1}{4})$, Lemma~\ref{lem_cp_3} implies that in $[0,\tilde{T}_1)$
\begin{align}
\label{eqn_lem_uni_1}
(1-\e) \Omega_{t/(1-\e)^2}(u) \subset  \Omega_t(u) \subset (1+\e) \Omega_{t/(1+\e)^2}(u).
\end{align}
	 
\medskip
	
Lemma~\ref{lem_uni_0} implies that $\Omega_t(\overline{u}) \subset \Omega_t(v) \subset \Omega_t(\underline{u})$ in $[0,\tilde{T}_1)$. Thus as shown in Lemma~\ref{lem_cp_3}, the following holds for  $0\leq t< \tilde{T}_1$:
\begin{align}
\label{eqn_lem_uni_2}
(1-\e) \Omega_{t/(1-\e)^2}(u) \subset \Omega_t(\overline{u}) \subset \Omega_t(v) \subset \Omega_t(\underline{u}) \subset (1+\e) \Omega_{t/(1+\e)^2}(u). 
\end{align}
		
By subtracting $\eta(s;u)$ and adding the same term, 
\begin{align}\label{eqn_lem_uni_4}
L_1(t) = \int_{0}^{t} \eta(s;v) - \eta_\e(s;u) ds= \int_{0}^{t} \eta(s;v) - \eta(s;u) ds + \int_{0}^{t} \eta(s;u) -  \eta_\e(s;u) ds. 
\end{align}
As Lemma~\ref{lem_cp_1}, the second term is bounded by $5\tilde{K}_{\infty} \e t$. As for the first term, from Lipschitz continuity of $\lambda$ for some $C_1>0$,
\begin{align*}
\mathcal{I}_1 := \left| \int_{0}^{t} \eta(s;v) - \eta(s;u)  ds \right| &\leq  \int_{0}^{t}  \Big|\lambda[|\Omega_s(v)|] - \lambda[|\Omega_s(u)|] \Big|ds  \leq  C_1\int_{0}^{t}  \Big| |\Omega_s(v)| - |\Omega_s(u)| \Big|ds 
\end{align*}
By \eqref{eqn_lem_uni_1}-\eqref{eqn_lem_uni_2} and Lemma~\ref{bound00}, 
\begin{align*}
\mathcal{I}_1 &\leq  C_1\int_{0}^{t} \Big|  \big| (1-\e) \Omega_{s/(1-\e)^2}(u) \big| -   \big| (1+\e) \Omega_{s/(1+\e)^2}(u) \big| \Big| ds  \\
&\leq  C_1 \int_{0}^{t} \Big| \big| \Omega_{s/(1-\e)^2}(u)\big| - \big| \Omega_{s/(1+\e)^2}(u)\big| \Big|ds  + C_2 \e t
\end{align*}
for some constant $C_2 = C_2(R_1, T)$. By Lemma~\ref{lem_lip}, we conclude that $\mathcal{I}_1$ is bounded by $C_3 \e t$ for some constant $C_3 = C_3(r_1,R_1,T, \tilde{K}_\infty)$. Therefore, we have $L_1(t) < (C_3 + 5\tilde{K}_{\infty}) \e t$ in $[0,\tilde{T}_1]$. By similar arguments, the bound holds for $L_2$ as well in $[0,\tilde{T}_1]$.
	
\medskip
	
Finally, by continuity of $L_1$ and $L_2$, we have $L_1(\tilde{T}_1) = r_1\e/2$ or $L_2(\tilde{T}_1) = r_1\e/2$. In both cases, it holds that
\begin{align*}
r_1\e/2 = L_1(\tilde{T}_1) (\hbox{ or } L_2(\tilde{T}_1)) < (C_3 + 5\tilde{K}_{\infty})    \tilde{T}_1 \e,
\end{align*}
so we conclude with 
\begin{align}
\label{eqn_lem_uni_5}
\tilde{T}_1 \geq t_2 = t_2(r_1,R_1,T,\tilde{K}_\infty) := \frac{r_1}{2 C_3 + 10 \tilde{K}_{\infty}}. 
\end{align}
\end{proof}
	
{\bf Proof of Theorem \ref{cor-uni-2}:} The first part is parallel to the proof of Theorem~\ref{thm-uni-2}. Let $u$ and $v$ be two viscosity solutions of \eqref{main eq1} and \eqref{initial}. By Lemma~\ref{lem_uni_0} and Lemma~\ref{lem_uni_1}, it holds that $\overline{u} \leq v \leq \underline{u}$ in $[0,t_2]$ where $t_2$ is given in \eqref{time_t2}. We can now send $\e$ to zero to conclude that $u = v$ in $[0,t_2]$.

\medskip

Next let us consider the corresponding convolutions of $\underline{u}$ and $\overline{u}$ in the time interval $t_0 + [0,t_2] \subset [0,T]$ for $t_0>0$ and $t_2$ given in \eqref{time_t2}. Note that $t_2$ given in \eqref{time_t2} does not depend on $t_0$ because both $\Omega_t(u)$ and $\Omega_t(v)$ are in $S_{r_1, R_1}$ for all $t \in [0,T]$. Thus, we can iterate the step 1 for $t_0 = kt_2$ on $k t_2 + [0,t_2]$, $k \in \mathbb{N}$ and, conclude that $u = v$ in $[0,T]$.
\hfill$\Box$

\section{Construction of flat flows}
\label{sec_ene}

In this section, we construct a flat flow for \eqref{main eq1}, which coincides our notion of viscosity solutions. Our approach is based on minimizing movements first introduced by Almgren-Taylor-Wang \cite{Almgren:1993gw} (see also  \cite{LS95}, \cite{Chambolle:2004ds}, \cite{BCCN09}).

\medskip

As in \cite{Feldman:2014hb}, we introduce a gradient flow with geometric constraint, corresponding to the preservation of star-shapedness obtained in Theorem~\ref{pre-star}. Our constraint is crucial to ensure the strong (in Hausdorff distance) convergence of the minimizing movements, which enables geometric analysis of the limiting flow. On the other hand the constraint also poses technical challenges when we show the coincidence of flat flows with viscosity solutions (See Proposition \ref{cor_bar} and Corollary \ref{cor_bar2}). This is why we first approximate our original problem with a ``restricted" version \eqref{main_eq22}.

\subsection{Constrained Minimizing Movements}
Recall the following energy functional associated with \eqref{main eq1},
\begin{align}
\label{energy2}
J(E) = \mathrm{Per}(E) - \Lambda[|E|].
\end{align}
where the function $\Lambda(s)$ is an anti-derivative of $\lambda(s)$, and $\mathrm{Per}(E)$ denotes the perimeter of $E$. For the sets $E$ and $F$ in $\R^n$, we use the pseudo-distance defined by
\begin{align*}
\tilde{d}(F,E) := \left({\int_{E \triangle F} d(x,\partial E) dx}\right)^{\frac{1}{2}}, \quad E \triangle F := (E\setminus F) \cup (F\setminus E).
\end{align*}

We consider minimizing movements for the restricted equation \eqref{main_eq22} in a finite time interval $[0,T]$ with initial data \eqref{initial} with the admissible sets
\begin{align}\label{admissible}
A_M(E) := \{ F \in S_{r_0,R_0} \mid d_H(\partial(F \cap E),\partial E) \leq Mh \} \hbox{ for } E \subset \R^n, 
\end{align}
with  
\begin{align}\label{restriction}
r_0< r_1 =r_1(\rho,a) = \rho(a^2+2a)^{1/2} \hbox{ and } R_0 > R_1
\end{align}
where $r_1$ is given in \eqref{eqn_pre_star} and $R_1= R_1(T)$ in \eqref{eqn_bound00}.
Recall that $\rho$ is given in Definition~\ref{rho-ref} and $a$ is given in Lemma~\ref{Brho}. The dependence of $R_1$ on $T$ is the reason why we restrict the discussion in this and next section to the finite time interval. For simplicity we will omit the time dependence in $R_1$ and thus in $R_0$.  
 
\begin{DEF}
\label{dis-flow}
For $h > 0$, $T_{h,M}$ is defined by
\begin{align*}
T_{h,M}(E) \in \argmin_{F \in A_M(E)} I_{h}(F;E), \quad I_{h}(F;E) := J(F) + \frac{1}{h}\tilde{d}^2(F,E),
\end{align*}
The existence of a minimizer, $T_{h,M}(E)$ follows from Lemma \ref{lem-cpt}, \ref{lem-cpt2} and \ref{beer}.

\textit{The constrained minimizing movement} $E^{h,M}_t$ of $J$ for $t \in [0, T]$ with initial set $E_0$ can be defined by
\begin{align*}
E^{h,M}_t := T^{[t/h]}_{h,M}(E_0).
\end{align*}
Here, $T^{m}$ for $m\in \mathbb{N}$ is the $m$-th functional power. 
\end{DEF}

\begin{DEF}
\label{def-ren}
A function $w_M:=\chi_{E_t} - \chi_{E_t^C}$ is \textit{a restricted flat flow} of \eqref{main_eq22} and \eqref{initial} if $E_0 = \Omega_0$ and there exists a sequence $h_k \rightarrow 0$ such that
\begin{align*}
d_H(E_t, E_{t}^{h_k,M}) \rightarrow 0
\end{align*}
locally uniformly in time as $k$ goes to infinity.
\end{DEF}

To show the existence of a restricted flat flow, let us show compactness property of the constrained minimizing movements.

\begin{lem} 
\label{GF_inequality}	
The constrained minimizing movement $E^{h,M}_t$ in Definition \ref{dis-flow} satisfies the following inequality for $0<t<s\leq T$:
\begin{align}\label{first}
\tilde{d}^2(E^{h,M}_{s}, E^{h,M}_{t}) \lesssim_{r,R} (s-t)(J(E^{h,M}_{t}) - J(E^{h,M}_{s}))
\end{align}
and, as a consequence,
\begin{align}\label{second}
d_H(E^{h,M}_{s}, E^{h,M}_{t})\lesssim_{r,R}  |t - s |^{\frac{1}{n+1}}.
\end{align}
\end{lem}

\begin{proof}
 We will use the triangle-like inequality (see e.g.  Lemma 17, \cite{Feldman:2014hb}):
\begin{align}
\label{ps-dis}
\frac{\tilde{d}^2(F_{k+1},F_1)}{k} \lesssim_{r,R} \sum_{j=1}^{k}\tilde{d}^2(F_{j+1},F_{j}) \hbox { for } F_1, ..., F_{k+1} \in S_{r,R}.
\end{align}

Suppose that $t \in [Kh,(K+1)h)$ and $s \in [(K+L)h,(K+L+1)h)$ for some $K$ and $L>0$.
By the construction of $E^{h,M}_t$ in Definition \ref{dis-flow} for $k \in N$,
\begin{align*}
J(E^{h,M}_{(k-1)h}) - J(E^{h,M}_{kh}) \geq \frac{1}{h}\tilde{d}^2(E^{h,M}_{kh},E^{h,M}_{(k-1)h}).
\end{align*}
By summing both sides from $k=K+1$ to $k= K + L$,
\begin{align*}
J(E^{h,M}_{Kh}) - J(E^{h,M}_{(K+L)h}) &\geq \sum_{k=K+1}^{K+L}\frac{1}{h}\tilde{d}^2(E^{h,M}_{kh},E^{h,M}_{(k-1)h}),\\
&\gtrsim_{r,R} \frac{1}{Lh}\tilde{d}^2(E^{h,M}_{(K+L)h}, E^{h,M}_{Kh}),
\end{align*} 
where the last inequality follows from \eqref{ps-dis}. \eqref{second} follows from Lemma~\ref{lem-cpt2}.
\end{proof}

One can apply Lemma \ref{GF_inequality} and compactness of star-shaped sets (Lemma \ref{lem-cpt}, \ref{lem-cpt2} and \ref{beer}) to obtain the following:

\begin{thm}
\label{cor-exi-ren}
There exists at least one \textit{restricted flat flow} $w_M$ of \eqref{main_eq22} and \eqref{initial} in the sense of Definition~\ref{def-ren}.
\end{thm}

\subsection{Barrier Property under Star-shapedness}

Next we establish a ``restricted barrier property" for a restricted flat flow with respect to a classical subsolution and supersolution of \eqref{main_eq22} with $\eta(t) = \lambda[|\Omega_t(w_M)|]$.  The proof of this proposition is rather technical and follows that of \cite{Grunewald:2011cb}: see Appendix \ref{app_pf1}. In a different setting, similar results are shown in \cite{CMNP19} and \cite{CMP15}.

\begin{prop}
\label{cor_bar}
(Restricted barrier property)
Let $w_M$ be a restricted flat flow of \eqref{main_eq22} with the admissible set constraint parameters $r_0$ and $R_0$ satisfying \eqref{restriction}. For any $r>r_0$ and $R<R_0$, suppose that there exists a test function $\phi$ on $Q_T$ such that $\phi$ is a classical subsolution in $Q_T$ of \eqref{main_eq22} with $\eta(t) = \lambda[|\Omega_t(w_M)|]$, $|D\phi|\neq 0$ on $\partial\Omega_t(\phi)$ and  $\Omega_t(\phi) \in S_{r,R}$ in $[0,T]$.
If $\Omega_0(\phi) \subset \subset \Omega_0(w_M)$,
then 
$$
\Omega_t(\phi) \subset \subset \Omega_t(w_M)\quad \hbox{ for all  }t \in [0,T].
$$

Similarly, suppose that there exists a test function $\psi$ on $Q_T$ such that $\psi$ is a classical supersolution in $Q_T$ of \eqref{main_eq22} with $\eta(t) = \lambda[|\Omega_t(w_M )|]$, $|D\psi|\neq 0$ on $\partial\Omega_t(\psi)$ and $\Omega_t(\psi) \in S_{r,R}$ in $[0,T]$. If $\Omega_0(w_M ) \subset \subset \Omega_0(\psi)$, then $$\Omega_t(w_M ) \subset \subset \Omega_t(\psi) \quad \hbox{ for all  }t \in [0,T].$$
\end{prop}

In the proof of Proposition \ref{cor_bar}, we only use the properties of the classical solution $\phi$ in small neighborhood of $(x_0,t_0)$, thus we can deduce the following localized barrier property of the flat flow.

\begin{cor}
\label{cor_bar2}
Let $w_M$ be a restricted flat flow of \eqref{main_eq22} with the admissible set constraint parameter $r_0$ and $R_0$ satisfying \eqref{restriction}. If there exists a test function $\phi$ on $Q_T$ such that $\phi$ touches $w$ from below at $(x_0,t_0)$, $|x_0| < R_0$, $|D\phi|(x_0,t_0)\neq 0$ and 
$- x_0 \cdot \frac{D\phi}{|D\phi|}(x_0,t_0)  > r_0$.
then
$$\dfrac{\phi_t}{|D\phi|}(x_0,t_0) \geq \max \left\{  \nabla \cdot \left(\dfrac{D\phi}{|D\phi|}\right)(x_0,t_0)+ \eta(t_0), -M \right\}.$$

Similarly, if there exists a test function $\psi$ on $Q_T$ such that $\psi$ touches $w$ from above at $(x_0,t_0)$, $|x_0| < R_0$, $|D\psi|(x_0,t_0)\neq 0$ and 
$- x_0 \cdot \frac{D\psi}{|D\psi|}(x_0,t_0)  > r_0$
then
$$\dfrac{\psi_t}{|D\psi|}(x_0,t_0) \leq  \max \left\{ \nabla \cdot \left(\dfrac{D\psi}{|D\psi|}\right)(x_0,t_0)+ \eta(t_0), -M\right\}.$$
\end{cor}

\section{Existence of the Flow : Coincidence between Flat Flows and Viscosity Solutions}
\label{sec_exi}

Our goal in this section is to show the existence of a viscosity solution for \eqref{main eq1}.

\medskip

Let us give a brief summary of this section. We will show that a restricted flat flow coincides with the corresponding viscosity solution as long as the viscosity solution is star-shaped (Proposition~\ref{comp-en-vis}).  Ensuring this star-shaped property for the viscosity solution is the last step leading to the coincidence result (Proposition~\ref{short time}): this is where we need the lower bound $M$ on the velocity of the flow imposed by \eqref{main_eq22}. After we show the coincidence between a restricted flat flow and the corresponding viscosity solution, we address removing the bound $M$ to obtain our desired result (Theorem~\ref{coro}).

\subsection{Coincidence for the restricted problem}

In this section our goal is to show coincidence between flow flows and viscosity solutions for the restricted problem \eqref{main_eq22}.  To this end we first show a comparison result between a flat flow and the corresponding viscosity solution of \eqref{main_eq22}. We use the doubling argument in \cite{Crandall:1992kn} and \cite{Kim:2005fi} which preserves the star-shaped geometry of the level sets of the solutions.

\begin{prop}
\label{comp-en-vis}
Let $w_M$ be a restricted flat flow of \eqref{main_eq22} with the admissible set constraint parameter $r_0$ and $R_0$ satisfying \eqref{restriction}.
Suppose that there exists a viscosity subsolution $u : Q_T \rightarrow \R$ of \eqref{main_eq22} with $\eta(t) = \lambda[|\Omega_t(w_M)|]$ such that $\Omega_t(u)$ is in $S_{r,R}$ for all $t \in [0,T]$ for some $r>r_0$ and $R<R_0$.
If $
\Omega_0(u)  \subset \subset\Omega_0(w_M),
$ then 
$$
\Omega_t(u) \subset\subset \Omega_t(w_M)\quad \quad\hbox{ for all } t \in [0,T].
$$ 

Similarly, suppose that there exists a viscosity supersolution $u : Q_T \rightarrow \R$ of \eqref{main_eq22} with $\eta(t) = \lambda[|\Omega_t(w_M)|]$  such that $\Omega_t(u)$ is in $S_{r,R}$ for all $t \in [0,T]$ for some $r>r_0$ and $R<R_0$. 
If $
\Omega_0(w_M) \subset\subset \Omega_0(u),
$ then 
$$
\Omega_t(w_M) \subset\subset \Omega_t(u)\quad\hbox{ for all } t \in [0,T].
$$ 
\end{prop}

\begin{proof}

The proof follows the outline of  \cite{Kim:2005fi}, where the comparison principle is shown for a nonlocal mean-curvature flow.

\medskip

 For $c,\delta>0$, let us consider
$$
Z(x,t) := \sup_{|z| \leq c-\delta t} u(x+z,t) \hbox{ and } 0\leq t\leq \frac{c}{\delta},
$$
where $c$ is chosen sufficiently small so that  $\Omega_0(Z) \subset\subset \Omega_0(w_M)$.  Due to Lemma~\ref{sup convolution_gen}, the function $Z$ is a viscosity subsolution of \begin{align*}
u_t =  F(Du,D^2u,t) - \delta |Du|.
\end{align*} 
We will show Proposition 6.2 by showing that for any $\delta>0$ and $0\leq t\leq c/\delta$ we have 
\begin{align}\label{claim}
\Omega_t(Z) \subset \subset \Omega_t(w_M).
\end{align}

\medskip

 Note that for any $z \in \R^n$ such that $|z| \leq c$, the interior cone $IC(x,r)$ given in \eqref{eqn_ic} satisfies $IC(x+z,r-c) \subset IC(x,r) + z$ (See Lemma~\ref{lem:iic}). Thus, by the equivalence relation in Lemma~\ref{star}, $\Omega_t(u) \in S_{r,R}$ implies that $\Omega_t(u) + z \in S_{r - c,R+c}$ for all $|z|  \leq c$ and thus
\begin{align*}
\Omega_t(Z) = \bigcup_{|z| \leq c-\delta t} [\Omega_t(u) +z] \in S_{r-c,R+c}.
\end{align*}
Thus, $\Omega_t(Z) \in S_{r_0+c,R_0-c}$ for $0< c \leq \min\left\{\frac{r-r_0}{2}, \frac{R_0-R}{2}\right\}$.
 
\medskip
 
Suppose \eqref{claim} is false, then we have
$$
t_0:= \sup\{ t: \Omega_s(Z) \subset\subset \Omega_s(u) \hbox{ for } 0\leq s\leq t\}  \in (0, c/\delta).
$$
Due to Lemma~\ref{cor_holder} and Lemma~\ref{GF_inequality}, both sets $\partial\Omega_t(Z)$ and $\partial\Omega_t(w_M)$ evolve continuously in time.  Hence, $\partial \Omega(Z)$ touches $\partial \Omega(w_M)$ from inside for the first time at $t=t_0\in (0,\frac{c}{\delta})$. 

\medskip

For $\e \in (0,\frac{\delta}{2n})$, let us define $\tilde{Z} := \chi_{\bar{\Omega}(Z)} $ and $\tilde{W} := \chi_{\Omega(w_M)}$ and 
$$\Phi_\e(x,y,t) := \tilde{Z}(x,t) - \tilde{W}(y,t) - \frac{|x-y|^4}{4\e} - \frac{\e}{2(t_0 - t)}.$$
Let $d_0$ be distance between $\partial\Omega_0(Z)$ and $\partial\Omega_0(w_M)$. Since $\tilde{Z} -  \tilde{W}$ is bounded, we can choose a sufficiently small $\e << d_0^4$ such that $\Phi(x,y,0) < 0$ for all $x$ and $y$.  

\medskip

Since the function $\tilde{Z} -  \tilde{W}$ is upper semicontinuous and bounded above by zero for all $t<t_0$, the function $\Phi_\e(x,y,t)$ has a local maximum at $(x_\e,y_\e,t_\e)$ in $\R^n \times [0,t_0)$ for any $\e$. By H\"{o}lder continuity of $\partial\Omega(Z)$ and $\partial\Omega(w_M)$ from Lemma~\ref{cor_holder} and Lemma~\ref{GF_inequality}, there exists $x_1 \in \partial\Omega_{t_0 - \e}(\tilde{Z})$ and $y_1 \in \partial\Omega_{t_0 - \e}(\tilde{W})$ such that $|x_1 - y_1 | \leq K\e^{\frac{1}{2}}$ where $K$ depends on H\"{o}lder constants of $\partial\Omega(Z)$ and $\partial\Omega(w_M)$. For $\e << K^{-4}$, it holds that $\Phi(x_\e,y_\e,t_\e) > \Phi(x_1,y_1,t_0 - \e) > \frac{1}{3}$, and thus $t_\e \in (0,t_0)$. Also, $\Phi(x_\e,y_\e,t_\e)$ is uniformly bounded from below in $\e$, and thus it holds that $|x_\e - y_\e| = O(\e^{\frac{1}{4}})$.

\medskip

Moreover, since $\tilde{Z} - \tilde{W} > \Phi>\frac{1}{3}$ at $(x_\e,y_\e,t_\e)$, we conclude that $x_\e \in \Omega_{t_\e}(\tilde{Z})$, $y_\e \in \Omega_{t_\e}(\tilde{W})^C$. As $t_0$ is the first touching point and $t_\e < t_0$, it holds that $|x_\e - y_\e| >0$. On the other hand, $\tilde{Z}(x,t_\e) - \tilde{W}(y,t_\e) = 1$ for all $(x,y) \in \Omega_{t_\e}(\tilde{Z}) \times \Omega_{t_\e}(\tilde{W})^C$, and thus $(x_\e, y_\e)$ is a maximizer of the third term $- \frac{|x-y|^4}{4\e}$ in $\Omega_{t_\e}(\tilde{Z}) \times \Omega_{t_\e}(\tilde{W})^C$. We conclude that $x_\e$ and $y_\e$ are on $\partial \Omega_{t_\e}(\tilde{Z})$ and $ \partial \Omega_{t_\e}(\tilde{W})$, respectively. 

\medskip

Then, as equation (2.9) in \cite{Kim:2005fi}, there exist quadratic test functions $\phi^\e(x,t)$ and $\psi^\e(x,t)$ such that
\begin{align}
\label{comp-1}
\begin{cases}
\phi^\e(x,t) := [a_\e ( t-t_\e ) + p_\e \cdot (x-x_\e) + \frac{1}{2} (x-x_\e)^T X_\e (x-x_\e) ]_+ \geq \tilde{Z}(x,t) &\qquad \hbox{ in } N_1^\e, \\
\psi^\e(y,t) := [b_\e ( t-t_\e ) + q_\e \cdot (y-y_\e) + \frac{1}{2} (y-y_\e)^T Y_\e (y-y_\e) ]_+ \leq \tilde{W}(y,t) &\qquad \hbox{ in } N_2^\e, 
\end{cases}
\end{align}
where constants $a_\e,b_\e \in \R$, $p_\e,q_\e = \frac{x_\e-y_\e}{\e} +O(\e^2) \in \R^n\setminus\{0\}$, $X_\e, Y_\e \in S^{n\times n}$, neighborhoods $N_1^\e$ of $(x_\e,t_\e)$ and $N_2^\e$ of $(y_\e,t_\e)$ satisfying the inequalities:
 \begin{align}
\label{comp-2}
\begin{cases}
a_\e - b_\e &\geq 0,\\
X_\e - Y_\e &\leq \e |p_\e| I, \\
||p_\e| - |q_\e|| &\leq \e^2 \min\{1,|p_\e|^2\},\\
|p_\e - q_\e| &\leq \e^2 \min\{1,|p_\e|^2\}.
\end{cases}
\end{align}

For simplicity we carry out the computations in steps 4-5. for the case $M=\infty$.  Since $\tilde{Z}$ is a viscosity solution and $\phi^\e$ touches $\tilde{Z}$ from above at $(x_\e,t_\e)$, it holds that
$$ \frac{a_\e}{|p_\e|} = \dfrac{\phi^\e_t}{|D\phi^\e|}(x_\e,t_\e) \leq  \nabla \cdot \left(\dfrac{D\phi^\e}{|D\phi^\e|}\right)(x_\e,t_\e) + \eta(t_\e) - \delta = \frac{1}{|p_\e|} \left(\mathrm{trace}(X_\e) - \frac{p_\e^T X_\e p_\e}{|p_\e|^2}\right)+ \eta(t_\e) - \delta.$$ 
By inequalities \eqref{comp-2} and the ellipticity of the operator, $\mathrm{trace}(X) - \frac{p^T X p}{|p|^2},$ it can be seen that
\begin{align*}
\frac{b_\e}{|p_\e|} \leq  \frac{a_\e}{|p_\e|} &\leq  \frac{1}{|p_\e|} \left(\mathrm{trace}(X_\e ) - \frac{p_\e^T X_\e p_\e}{|p_\e|^2}\right)+ \eta(t_\e) -\delta, \\
&\leq \frac{1}{|p_\e|} \left(\mathrm{trace}(Y_\e
) - \frac{p_\e^T Y_\e p_\e}{|p_\e|^2}\right) + \eta(t_\e) - \frac{\delta}{2}.
\end{align*}
Thus, by \eqref{comp-2}, for sufficiently small $\e>0$, it holds that
\begin{align}
\label{comp-3}
\frac{b_\e}{|q_\e|} \leq  \frac{1}{|q_\e|} \left(\mathrm{trace}(Y_\e) - \frac{q_\e^T Y_\e  q_\e}{|q_\e|^2}\right)+ \eta(t_\e)  - \frac{\delta}{4}.
\end{align}

Moreover, as $\Omega_t(\tilde{Z}) \in S_{r_0+c,R_0-c}$, $|x_\e| < R_0 - c$ and Lemma \ref{star2} implies that
$$x_\e \cdot \left(-\frac{p_\e}{|p_\e|}\right) \geq r_0+c.$$ 
There exists sufficiently small $\e_0$ such that for any $\e \in (0,\e_0)$, 
\begin{align}
\label{comp-4}
|y_\e| < R_0, \hbox{ and } y_\e \cdot \left(-\frac{q_\e}{|q_\e|}\right) > r_0.
\end{align}
This contradicts Corollary \ref{cor_bar2}. since $\psi^\e$ touches $\tilde{W}$ from below at $(y_\e,t_\e)$, but satisfies \eqref{comp-3} and \eqref{comp-4}.
\end{proof}

Next we will show that viscosity solutions $u$ of \eqref{main_eq22} has a short time star-shapedness property. 

\begin{DEF}
	\label{def_cs}
	 (Inf-Sup Convolutions with Space Scaling)
	For $\delta, \e>0$ and $M$ as in \eqref{main_eq22}, define ${\tilde{u}^S} , {\hat{u}^S}: \R^n \times \left[0,\frac{\delta}{5M}\right) \rightarrow \R$ as 
	 	\begin{align}
	\label{eqn_inf}
	{\tilde{u}^S}(x,t;\delta,\e) := \left. \inf \left\{  u\left(\frac{y}{1+\e},t\right) \,\right| \, \hbox{ } y \in \overline{B_{\delta\e-5M\e t}(x)} \right\},
	\end{align}
	and
	\begin{align}
	\label{eqn_sup}
	{\hat{u}^S}(x,t;\delta,\e) := \left. \sup \left\{  u\left(\frac{y}{1-\e},t\right) \,\right| \, \hbox{ } y \in \overline{B_{\delta\e-5M\e t}(x)} \right\},
	\end{align}
\end{DEF}

 Note that $\Omega_0(\hat{u}^S) \subset \Omega_0(u) \subset \Omega_0(\tilde{u}^S)$ due to \eqref{eqn_set_inf} and \eqref{eqn_set_sup}.

\begin{lem}
\label{sup convolution2}
Let u be \textit{a viscosity solution} of \eqref{main_eq22}. Suppose that $\Omega_0(u) \in S_{r}$ and $|\eta(t)| < M$ in $[0,T]$. Then, for any fixed $0<\delta<r$, there exists $\e_0 \in (0,1) $ such that ${\tilde{u}^S}$ (${\hat{u}^S}$) is a viscosity supersolution (subsolution) of \eqref{main_eq22} for all $0<\e < \e_0$. 
\end{lem}

\begin{proof}
We only prove for ${\tilde{u}^S}$, the proof for ${\hat{u}^S}$ is parallel. Let us define
$v(x,t) := u\left(\frac{x}{1+\e},t\right)$
in $\oQ$. Since $u$ is a viscosity solution of \eqref{main_eq22},
$v$ solves
$$
\frac{v_t}{ |Dv|} = (1+\e) \max \left[ (1+\e)  \nabla \cdot \left(\frac{Dv}{|Dv|}\right) + \eta(t),  - M\right]
$$
in the viscosity sense. Proceeding as in the proof of Lemma \ref{sup convolution_gen} one can  verify that ${\tilde{u}^S}$ satisfies
$$
\frac{{\tilde{u}^S}_t}{ |D{\tilde{u}^S}|} \geq (1+\e) \max \left[ (1+\e)  \nabla \cdot \left(\frac{D{\tilde{u}^S}}{|D{\tilde{u}^S}|}\right) + \eta(t),  - M\right]  + 5M \e.
$$
	
For simplicity, let us denote $H := - \nabla \cdot \left(\frac{D{\tilde{u}^S}}{|D{\tilde{u}^S}|}\right)$. Then, the right hand side is
$$(1+\e) \max \left[ - (1+\e)  H + \eta(t),  - M\right]  + 5M \e = \max \left[ - (1+\e)^2  H + (1+\e) \eta(t) + 5M \e,  - M + 4M\e \right].$$
	
First suppose that $-H + \eta \geq - M$. Since $|\eta(t)| < M$ and $\e<1$, it holds that 
\begin{align*}
- (1+\e)^2  H + (1+\e) \eta + 5M \e
&\geq - H + (\e^2 + 2\e)(- \eta - M) + (1+\e) \eta + 5M \e,\\
&\geq -H + \eta - (\e^2 + \e) \eta + (3 \e - \e^2) M \geq -H + \eta.
\end{align*}
If $-H + \eta < - M$, then $-M + 4M \e \geq -M = \max \left\{ -H +\eta , - M\right\}$, so we conclude that 
$$\frac{{\tilde{u}^S}_t}{ |D{\tilde{u}^S}|} \geq \max \left[   \nabla \cdot \left(\frac{D{\tilde{u}^S}}{|D{\tilde{u}^S}|}\right) + \eta(t),  - M\right].$$
\end{proof}

\begin{prop} \label{short time} (Short-time star-shapedness) Let $u$ be a viscosity solution of \eqref{main_eq22} and \eqref{initial}. Suppose that $\Omega_0(u) \in S_{r,R}$ and $|\eta(t)| < M$ for $0\leq t\leq \frac{r}{5M}$. Then, $\Omega_t(u) \in S_{r-5Mt,R+Mt}$ for $t \in \left[0, \frac{r}{5M}\right)$. 
\end{prop}
\begin{proof}

Applying Theorem \ref{comparison} for \eqref{main_eq22} to $u$ and $\tilde{u}^S$, we have
\begin{align*}
\Omega_t(u) \subset\subset \Omega_t({\tilde{u}^S})
\end{align*}
By Definition~\ref{def_cs} of $\tilde{u}^S$, it holds that
\begin{align}
\Omega_t(u) \subset\subset \bigcap_{|z| \leq {(\delta  - 5Mt)\e}} [(1+\e)\Omega_t(u) +z]
\end{align}
for all $0<\e < \e_0$, $0 < \delta < r$ and $0\leq t < \frac{\delta}{5M}$ where $\e_0$ is given in Lemma~\ref{sup convolution2}. Then, by \eqref{eqn_star} in Lemma~\ref{star}, we conclude that $\Omega_t(u) \in S_{r-5Mt}$.

\medskip

On the other hand, let us compare $u$ with a radial barrier $\phi$ defined by $$\phi :=  \chi_{B_{r(t)}} - \chi_{{B_{r(t)}}^C},$$ where $r:[0,T] \rightarrow \R$ is defined by $r(t) := R +Mt$. Since $\phi$ is a viscosity supersolution, comparison principle implies $\Omega_t(u) \subset B_{R+Mt}$. Thus, $\Omega_t(u) \in S_{r-5Mt,R+Mt}$ for $t \in \left[0, \frac{r}{5M}\right)$. 
\end{proof}

Now we are ready to prove our main theorem in this subsection.

\begin{thm}\label{coincidence}
Suppose that $\Omega_0$ has $\rr$. Let $r_0$ and $R_0$ satisfy \eqref{restriction}, and $\tilde{K}_{\infty}=\tilde{K}_{\infty}(r_0,R_0)$ be as in Lemma~\ref{cor_holder} . For $M>\tilde{K}_{\infty}$, consider a restricted flat flow $w_M$ of \eqref{main_eq22} and \eqref{initial} in the sense of Definition~\ref{def-ren}. If  $u$ is a unique viscosity solution of \eqref{main_eq22} and \eqref{initial} with $\eta(t) = \lambda[|\Omega_t(w_M)|]$, then $w_M=u$ in $\oQ$.
\end{thm}

\begin{proof}
The existence and short time uniqueness of $u$ for the above choice of $\eta(t)$ follows by Theorem~\ref{comparison} and Theorem~\ref{thm-uni-2}. 

Recall that $\Omega_0 \in S_{r_1,R_1}$ where $r_1$ and $R_1$ are given in \eqref{eqn_pre_star} and \eqref{eqn_bound00}.  Let us first show that $u = w_M$ in the small time interval $I=[0,t_0]$, where $t_0:=\min\left\{\frac{r_1-r_0}{10M}, \frac{R_0-R_1}{2M} \right\}$. As Corollary~\ref{pre-star2}, we can make $\Omega_0$ strictly smaller $\Omega_0^{\e,-}$ or bigger $\Omega_0^{\e,+}$ by dilation and can still make it stay in $S_{r_\e,R_\e}$ with $r_\e=r_1-O(\e) > r_0$ and $R_\e=R_1+O(\e) > R_0$, where $\e$ can be chosen arbitrarily small such that $r_\e - r_0 > \frac{r_1 - r_0}{2}$ and $R_0 - R_\e > \frac{R_0 - R_1}{2}$. Let us choose to make the domain strictly bigger, $\Omega_0^{\e,+}$, we can apply Proposition~\ref{short time} to ensure that the corresponding viscosity solution $u^\e$ of \eqref{main_eq22} satisfies, for some $r>r_0$ and $R<R_0$, 
$$
\Omega_t(u^\e)\in S_{r,R} \hbox{ for } t \in I.
$$  We can then apply Proposition~\ref{comp-en-vis} to $u^\e$ and $w_M$ to yield that 
\begin{align}\label{order00}
\Omega_t(w_M) \subset \Omega_t(u^\e)\hbox{ for }t\in I.
\end{align}
  Now to send $\e\to 0$, note that $\Omega_t(u^\e)$ satisfies  H{\"o}lder continuity, Corollary~\ref{holder0}. Thus along a sequence $\e=\e_n\to 0$,  $\Omega_t(u^\e)$ converges to a domain $\Omega_t\in S_{r,R}$ uniformly with respect to $d_H$ in the time interval $I$. Lemma~\ref{lem_sta} then yields that the corresponding level set function $u$ for $\Omega_t$ is the unique viscosity solution of $\eqref{main_eq22}$ with the initial data $u_0$. From \eqref{order00} we have 
$$
\Omega_t(w_M) \subset \Omega_t=\Omega_t (u)\hbox{ for } t\in I.
$$
Similarly,  using $\Omega_0^{\e,-}$ instead of $\Omega_0^{\e,+}$ we can conclude that $\Omega_t(u)\subset \Omega_t(w_M)$ and thus it follows that they are equal sets for the time interval $I$.

\medskip

3.  Once we know that $u=w_M$ in $I$, we know that $\eta(t)$ equals $\lambda[|\Omega_t(u)|]$ in $I$, and thus Theorem~\ref{pre-star} and Lemma~\ref{bound00} applies and now we know that $\Omega_t(u)\in S_{r_1,R_1}$ for $t\in I$. Now we can repeat the argument at  $t=t_0$ over the time interval $t_0 +I$, using the fact that $\Omega_{t_0}(u)\in S_{r_1,R_1}$. Now we can repeat above arguments to obtain that $w_M=u$ for all times.
\end{proof}

\subsection{Existence}
Let us define the notion of {\it flat flows} for our original problem.

\begin{DEF}
	\label{def-ene}
	A function $w:=\chi_{E_t} - \chi_{E_t^C}$ for $E_t \subset \R^n, t>0$ is \textit{a flat flow} of \eqref{main eq1} and \eqref{initial} if $E_0 = \Omega_0$ and there exists a sequence $M_k \rightarrow \infty$ such that
	\begin{align*}
	d_H(E_t,\Omega_{t}(w_{M_k})) \rightarrow 0
	\end{align*}
	locally uniformly in time as $k$ goes to infinity. Here, $w_{M_k}$ is a restricted flat flow with $M=M_k$ in the sense of Definition \ref{def-ren}.
\end{DEF}

Let us first show existence of the flat flow.

\begin{lem}
Suppose that $\Omega_0$ has $\rr$. There exists at least one \textit{flat flow} $w$ of \eqref{main eq1} and \eqref{initial}.
\end{lem}
\begin{proof}
Due to Corollary \ref{holder0} and Theorem~\ref{coincidence}, we have
$$d_H(\Omega_t(w_{M}), \Omega_s(w_{M})) \leq C|s-t|^{1/2}$$
where $C$ does not depend on ${M}$. Thus along a sequence $M_k\to\infty$ such that $\Omega_t(w_{M_k})$ converges with respect to $d_H$ to $\Omega_t$, locally uniformly in time. We conclude that $w:=\chi_{\Omega_t} -\chi_{\Omega_t^C}$ of \eqref{main eq1} is a flat flow in the sense of Definition \ref{def-ene}.
\end{proof}

Now, let us show existence and uniqueness of viscosity solution of our original equation \eqref{main eq1}.

\begin{thm}\label{coro}
Suppose that $\Omega_0$ has $\rr$. Let $w$ be a flat flow of \eqref{main eq1} and \eqref{initial} and let $u$ be the unique viscosity solution of \eqref{main eq2} and \eqref{initial} with  $\eta(t) = \lambda[|\Omega_t(w)|]$. Then $w=u$ in $\oQ$.  In other words, $w$ is the unique viscosity solution of \eqref{main eq1} and \eqref{initial}.
\end{thm}

\begin{proof}
By the construction of a flat flow, there exists a sequence $M_k \rightarrow \infty$ such that
\begin{align*}
d_H(\Omega_t(w),\Omega_{t}(w_{M_k})) \rightarrow 0
\end{align*}
uniformly in time as $k$ goes to infinity. Note that by Theorem \ref{coincidence}, any restricted flat flow $w_{M_k}$ is the unique viscosity solution of \eqref{main_eq22} with $\eta(t) = \lambda[|\Omega_t(w_{M_k})|]$. Then, Lemma~\ref{lem_sta} implies that $w$ is a viscosity solution of \eqref{main eq2} with $\eta(t) = \lambda[|\Omega_t(w)|]$. By the uniqueness of the viscosity solution of \eqref{main eq2} (Theorem \ref{comparison}), we conclude that $w=u$ in $\oQ$.

\medskip

Thus, $w$ is a viscosity solution of $\eqref{main eq1}$ with $\eta(t) = \lambda[|\Omega_t(w)|] = \lambda[|\Omega_t(u)|]$. From Theorem~\ref{cor-uni-2}, it is unique.
\end{proof}

We expect that the unconstrained minimizing movement scheme gives the parallel results in Theorem~\ref{coro}. In the subsequent work \cite[Proposition 3.4]{KK19}, we show that a viscosity solution of \eqref{main eq1} can be approximated by 
a minimizing movement scheme with an admissible set $S_{r_0, R_0}$ instead of $A_M$ in \eqref{admissible}.
 
\section{Regularity and Convergence}
\label{sec_reg}

In this final section we discuss the large time behavior and exponential convergence to equilibrium for solutions of \eqref{main1}, for which the corresponding energy is  for $\gamma<-\frac{1}{n}$
\begin{align}\label{energy_specific}
J(E):= 
\begin{cases}
\mathrm{Per}(E)  - \dfrac{|E|^{\gamma+1}}{\gamma+1} & \hbox{ if } \gamma\neq -1
\\
\\ \mathrm{Per}(E)  - \ln |E| & \hbox{ if } \gamma =  -1
\end{cases}
\end{align}
Let us point out that here the dissipation of energy is crucial to guarantee that any subsequential limit of the evolving set over time variable must correspond to the unique equilibrium solution. 

\medskip

Let $u$ be the unique viscosity solution of \eqref{main eq1} and \eqref{initial} obtained from Theorem~\ref{coro}. First, we show uniform boundedness for sets $\Omega_t(u)$ for all times.

\begin{lem}
\label{mmin}
There exists $R>0$ such that $\Omega_t(u) \subset B_R$ for all $t>0$.
\end{lem}
\begin{proof}
For simplicity we denote $\Omega_t(u)$ by $\Omega_t$. Recall that the energy $J$ decreases through the flow,
\begin{align*}
J(\Omega_0) &\geq J(\Omega_t) = \mathrm{Per}(\Omega_t) -  \Lambda[|\Omega_t|]
\end{align*}
By the isoperimetric inequality, it holds that
\begin{align*}
\mathrm{Per}(\Omega_t) -  \Lambda[|\Omega_t|]
 &\geq n w_n^{\frac{1}{n}} |\Omega_t|^{\frac{n-1}{n}}  -  \Lambda[|\Omega_t|]  =  |\Omega_t|^{\frac{n-1}{n}}\left(n w_n^{\frac{1}{n}} - \frac{\Lambda[|\Omega_t|]}{|\Omega_t|^{\frac{n-1}{n}}}\right).
\end{align*}
where $w_n$ is the volume of $B_1$. Since $\lambda(s) = s^{\gamma}$ with $\gamma<-1/n$, we have 
\begin{align*}
\lim_{s \rightarrow \infty}\dfrac{\Lambda[s]}{s^{\frac{n-1}{n}}}  = \dfrac{n}{n-1} \lim_{s \rightarrow \infty}  s^{\frac{1}{n}}\lambda[s]  = 0
\end{align*}
which yields that $|\Omega_t|$ is uniformly bounded. As $\Omega_t$ has $\rr$, we conclude.
\end{proof}

Next, we improve the regularity of $\Omega_t(u)$. Due to the fact that the support of $u$ is uniformly bounded (See Lemma~\ref{mmin}) and is in $S_r$ for all times, it follows that there exists $s_0, L_0>0$ such that  for any point $x\in \Omega_{t}(u)$ and $(t-s_0^2)_+ \leq s\leq t$, $\partial\Omega_s (u)$ can be represented as a Lipschitz graph in $ B_{s_0}(x)$ with its Lipschitz constant less than $L_0$. This fact, the a priori estimates obtained in Lemma~\ref{regularity} in the appendix, and the regularization procedure given in Theorem 3.4 of \cite{EH} yields the following short-time result.  

\begin{lem}\label{bound}
Suppose that $\Omega_0$ has $\rr$. For $t\geq 1$,  $\partial\Omega_t(u)$  is uniformly $C^{1,1}$. More precisely there exists $s_0, L_0>0$  such that for any point $x\in \Omega_{t}(u)$ and for $(t-s_0^2)_+\leq s\leq t$, $\partial\Omega_s(u)$ can be represented as a $C^{1,1}$ graph in $ B_{s_0}(x)$ with its $C^{1,1}$ norm less than $L_0$.
 \end{lem}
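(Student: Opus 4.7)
The strategy is announced in the paragraph preceding the lemma: use the uniform Lipschitz-graph representation of $\partial\Omega_t(u)$ coming from the reflection/star-shapedness structure, and then invoke the local parabolic regularity result Lemma~\ref{regularity} for mean-curvature-type flows with bounded forcing. So the plan has two main steps: (i) verify a uniform Lipschitz graph representation on a small parabolic cylinder around each boundary point, and (ii) apply Lemma~\ref{regularity}.

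For step (i), I would first use Theorem~\ref{pre-star} to conclude $\Omega_t(u)\in S_{r_1}$ for all $t>0$, and combine this with the a priori bounds $B_\rho\subset\Omega_t(u)\subset B_R$ to get interior and exterior cone conditions at every boundary point with a uniform aperture $\theta_*>0$, via Lemma~\ref{star}(1)(iii)-(iv). This gives, at any fixed $t_0\geq 1$ and $x_0\in\partial\Omega_{t_0}(u)$, a Lipschitz graph representation of $\partial\Omega_{t_0}(u)$ over the hyperplane perpendicular to the cone axis at $x_0$, in a ball $B_r(x_0)$ with $r$ and Lipschitz constant $L_0$ depending only on $\theta_*$. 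To extend this to a parabolic cylinder, I would invoke Corollary~\ref{holder0}, which gives $d_H(\partial\Omega_s(u),\partial\Omega_{t_0}(u))\leq C|s-t_0|^{1/2}$ uniformly; so for $s\in[t_0-r^2,t_0]$ with $r$ small enough, the same base hyperplane supports a Lipschitz graph of $\partial\Omega_s(u)\cap B_r(x_0)$ with constant $\leq 2L_0$.

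For step (ii), the graph function $v(y,s)$ then satisfies, in the viscosity sense, the graphical form of $V=-H+\eta(s)$ in $B_r(x_0)\times[t_0-r^2,t_0]$, where $\eta(s)=\lambda[|\Omega_s(u)|]$. This forcing is bounded uniformly because $|\Omega_s(u)|\in[|B_\rho|,|B_R|]$, and Hölder continuous in time by the energy-dissipation bound \eqref{GF_inequality} (exactly the property used in the proof of Lemma~\ref{comp-en-vis}). These conditions place $v$ squarely in the hypotheses of Lemma~\ref{regularity}, whose conclusion is a $C^{1,1}$ bound $\|v\|_{C^{1,1}(B_{r/2}(x_0)\times[t_0-r^2/4,t_0])}\leq L_0'$, with $L_0'$ depending only on $L_0$, $\|\eta\|_\infty$ and its Hölder norm. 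Since each of these quantities is uniform in $(x_0,t_0)$ for $t_0\geq 1$, this yields the uniform $C^{1,1}$ estimate claimed in the lemma.

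The main obstacle, in my view, lies less in the regularity theory itself (which is by now classical) and more in the bookkeeping of step (i): ensuring that a single choice of base hyperplane supports a Lipschitz graph of the boundary throughout the whole parabolic cylinder. This requires the outward normal direction of $\partial\Omega_s(u)$ at points near $x_0$ to stay in a fixed cone around the normal at $(x_0,t_0)$, both in the spatial and temporal variables. The spatial part is controlled by $\theta_*$ via the cone condition; the temporal part is controlled by the $|s-t_0|^{1/2}$ motion of the boundary from Corollary~\ref{holder0}, and is ensured by choosing $r$ smaller than a constant times $\theta_*^2/C^2$. This is the only genuinely delicate estimate.
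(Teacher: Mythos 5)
Your proposal follows essentially the same route the paper sketches: uniform star-shapedness and boundedness give a Lipschitz graph on a parabolic cylinder, and then Lemma~\ref{regularity} upgrades this to $C^{1,1}$. The only step you gloss over (as does the paper) is the passage from the viscosity graph function to a classical solution of \eqref{graphmcf2} so that Lemma~\ref{regularity} can be literally applied; this is the standard parabolic De~Giorgi--Nash--Moser plus Schauder bootstrap once the Lipschitz bound and boundedness of $\eta$ are in hand, and your remark that $\eta$ is bounded and H\"older in time is exactly what makes that step work.
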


Next we show the convergence of $\Omega_t(u)$ in terms of the Hausdorff distance. This is due to the fact that the ball is the unique critical point of the perimeter energy among the class of $C^{1,1}$, star-shaped sets with given volume.

\begin{lem}
\label{uni-min}  $J(E)$ given in \eqref{energy_specific} has a unique minimizer $B_{r_{\infty}}$ among sets in $S_{\rho}$, up to translation. 
\end{lem}

\begin{proof} 
	
By the usual re-arrangement argument one can show that the minimizer is a ball (See for instance \cite[Theorem 14.1]{Maggi:2012tg}). By differentiating the energy $J(B_r)$ with respect to radius $r$ and recalling \eqref{main1}, we have
\begin{align*}
\frac{dJ(B_r)}{dr}
&= n(n-1) w_n r^{n-2} - n w_n r^{n-1} \lambda[|B_r|] =w_n r^{n-2} ( n-1 - r \lambda[|B_r|]),
\end{align*}
which is zero if and only if  $r \lambda[|B_r|]=n-1$.
Note that such $r$ is unique since $r\lambda[|B_r|] = w_n^\gamma r^{n\gamma+1}$ is a decreasing function on $r$. Let's denote this by
\begin{align}
\label{eqn_rinf}
r_\infty:= \left( \frac{n-1}{w_n^\gamma} \right)^{\frac{1}{n\gamma+1}}
\end{align}
As we have $\lambda[|B_{\rho}|] > \frac{n-1}{\rho}$ due to Assumption A, it follows that $r_\infty > \rho$.

\medskip

As $\frac{dJ(B_r)}{dr}$ is positive for $r>r_\infty$ and negative for $r<r_\infty$, we conclude that $r_\infty$ is a minimizer of $J(B_r)$.
\end{proof}

We show the uniform convergence of $\Omega_t(u)$ when $\Omega_0$ has $\rr$. Note that without the assumption on the initial set, the topology of $\Omega_t(u)$ may not be preserved as described in Figure~\ref{fig_non_star}. Thus it is unclear whether the flow converges to a ball.

\begin{thm}\label{first}
Suppose that $\Omega_0$ has $\rr$. Then, the set $\Omega_t(u)$ uniformly converges to a ball as $t \to \infty$, modulo translation. More precisely 
$$
\inf\{ d_H(\Omega_t(u), B_{r_\infty}(x)): x\in \overline{B_{\rho}(0)}\} \to 0 \hbox{ as } t\to\infty,
$$
where $r_\infty$ is given in \eqref{eqn_rinf}.
\end{thm}

\begin{proof}
For simplicity we denote $\Omega_t(u)$ by $\Omega_t$. Recall $R>0$ from Lemma~\ref{mmin}.
Define $U_n: [0,\infty) \to S_{r,R}$ by $U_n(t):= \Omega_{t+n}$.  Due to Corollary~\ref{holder0} the maps $U_n$ are a sequence of equicontinuous maps into $S_{r,R}$ in Hausdorff topology, and thus along a subsequence $U_n$ locally uniformly converges to $U_{\infty}: [0,\infty) \to S_{r,R}$.  From the standard stability theory of the viscosity solutions, it follows that $\chi_{U_{\infty}} - \chi_{U_{\infty}^C}$ is a viscosity solution of \eqref{main eq1}. Let us recall that the energy $J(\Omega_t)$ is monotone decreasing in time and $J : S_{r,R} \to \R$ is bounded from below. Thus, we have
\begin{align}
\label{01}
\lim_{n \to \infty} J(\Omega_n) = \lim_{n \to \infty} J(U_n(t)) = \lim_{n \to \infty} J(U_n(s)) \hbox{ for any } t,s >0.
\end{align}
Also recall from Lemma~\ref{GF_inequality} that we have 
\begin{align}
\label{00}
\tilde{d}^2(\Omega_t, \Omega_s) \lesssim_{r,R} |t-s| (J(\Omega_t) - J(\Omega_s)) \hbox{ for any } s > t >0.
\end{align}

\medskip

Combining \eqref{01} and \eqref{00} with the uniform convergence of $U_n$ in Hausdorff distance yields that for all $t,s>0$
$$
\tilde{d}^2(U_\infty(t), U_\infty(s)) \lesssim_{r,R} |t-s| \left(\lim_{n \to \infty} J(U_n(t)) - \lim_{n \to \infty} J(U_n(s))\right)=0,
$$
and thus $U_\infty$ is independent of time, and $\chi_{U_\infty}$ solves the prescribed mean curvature problem 
$$
H = \lambda(|E|).
$$

The only viscosity solution of the above problem, among sets in $S_{r,R}$ with $C^{1,1}$ boundaries, is smooth due to the fact that the corresponding level set PDE in the graph setting is uniformly elliptic with Lipschitz coefficients (See also Lemma~\ref{regularity}). From the Soap Bubble Theorem by Alexandrov in \cite{Ale58} and \cite{Ale62}, it follows that the only possible solution is radial, and so we conclude that $U_\infty = B_{r_\infty}$.
\end{proof}

To obtain exponential convergence, it is necessary to observe further regularity properties of $\Omega_t(u)$.  To this end, note that the following holds as a consequence of Lemma~\ref{bound} and Theorem~\ref{first}:

\begin{lem}\label{holder}
Suppose that $\Omega_0$ has $\rr$. For any $\e>0$ and $0<\alpha<1$ there exists $T$ and $C>0$ such that for any $t>T$ we have the following:
\begin{itemize}
\item[(a)] There exists $x_t\in B_{\rho}(0)$ such that for all $x \in \partial \Omega_t(u)$, the outward unit normal $\nu_x$ at $\partial\Omega_t(u)$ satisfies 
\begin{align}\label{first_bound}
\left|\nu_x - \frac{(x-x_t)}{|x-x_t|} \right| \leq 2C\e \hbox{ on } x\in \partial\Omega_t (u).
\end{align}
\item[(b)]
For all $x,y \in \partial \Omega_t(u)$, 
\begin{align}\label{second_bound}
\left| \left( \nu_x - \frac{(x-x_t)}{|x-x_t|} \right) - \left( \nu_y - \frac{(y-x_t)}{|y-x_t|} \right) \right| \leq C\e^{1-\alpha} |x-y|^{\alpha}.
\end{align}
\end{itemize}
\end{lem}

\begin{proof}

Choose a sufficiently small $\e$ depending on $\rho$. By Theorem ~\ref{first}, we can find $T>1$ such that for any $t>T$ there exists $x_t\in B_{\rho}(0)$ such that  
\begin{align}\label{confinement}
B_{r_\infty-\e^2}(x_t)\subset \Omega_t \subset B_{r_\infty+\e^2}(x_t).
\end{align}

Due to Lemma~\ref{bound}, the outward normal vector $\nu_x$ at $x\in \partial\Omega_t$  satisfies 
\begin{align}\label{zeroth_bound}
|\nu_x-\nu_y| \leq C_0|x-y|,
\end{align} 
where $C_0$ is independent of $t>1$. This means that if \eqref{first_bound} fails at $x_0\in\partial\Omega_t$ with sufficiently large $C$, let's say $C> C_0+2$, then $\nu_x$ stays different from  $\frac{(x_0-x_t)}{|x_0-x_t|}$ by at least $2\e$ in $\e$-neighborhood of $x_0$. As a result the boundary part $\partial\Omega_t \cap \partial B_{\e}(x_0)$ intersects the complement of the $2\e^2$-neighborhood of the tangential hyperplane of $B_r(x_t)$ at $x_0$, where $r= |x_0-x_t|$. This violates \eqref{confinement}, and thus we conclude that  \eqref{first_bound} holds.

\medskip

 To show (b), recall that due to \eqref{zeroth_bound} we have 
$$
 \dfrac{|\nu_{x} - \nu_{y}|}{|x-y|^{\alpha}} \leq Cd^{1-\alpha} \quad\hbox{ if } |x-y| \leq d \hbox{ for } 0< \alpha<1.
$$
On the other hand the same quantity is bounded by $\frac{2C\e}{d^{\alpha}}$ if $|x-y|\geq d$ due to \eqref{first_bound}. Hence choosing $d = \e$ we arrive at \eqref{second_bound}. 
\end{proof}

\medskip

Lemma~\ref{holder} states that after a finite time $\Omega_t$ gets arbitrarily close to a ball  in $C^{1,\alpha}$ norm. For the volume preserving mean curvature flow, \cite{Escher:1998} proves that when the initial domain is close to a ball in the sense of Lemma~\ref{holder} with sufficiently small $\e$, it converges to a unique round ball exponentially fast. Their analysis can be also applied to our problem with minor modifications:

\begin{thm}\label{exponential}
Suppose that $\Omega_0$ has $\rr$. The set $\Omega_t(u)$ exponentially converges to a unique ball of radius $r_\infty$ whose center depends only on the initial set $\Omega_0$, as $t\to\infty$.
\end{thm}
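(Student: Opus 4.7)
The plan is to convert the large-time stability problem, after the initial transient, into a standard Banach-space bifurcation/center-manifold analysis around the family of stationary balls, following the strategy of Escher--Simonett \cite{Escher:1998} but with the modifications required by our nonlocal forcing $\lambda[|\Omega_t|]$.

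First I would use Lemma~\ref{holder} together with standard parabolic regularity (bootstrapped from the $C^{1,1}$ bound of Lemma~\ref{bound} and the fact that $\lambda[|\Omega_t|]$ is bounded and Hölder in time by \eqref{GF_inequality}) to fix a time $T_0$ after which $\partial\Omega_t(u)$ is a $C^{2,\alpha}$ (and in fact little-Hölder $h^{2+\alpha}$) normal graph of height $\rho(\omega,t)$ over the reference sphere $S^{n-1}_{r^*}(x_*)$, with $\rho$ as small as we like. Here $x_* \in \overline{B_\rho(0)}$ is the center produced by Theorem~\ref{first} along some subsequence. Rewriting \eqref{main} in terms of $\rho$ yields a quasilinear parabolic equation
$$
\partial_t \rho = F(\rho,D\rho,D^2\rho) \;+\; \lambda\!\bigl[V(\rho)\bigr]\,G(\rho,D\rho),
$$
where $V(\rho)$ is the (nonlocal) enclosed-volume functional and $F,G$ are smooth operators obtained from expanding $-H$ in the graph coordinates.

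Next I would linearize around $\rho\equiv 0$. Using $H(\rho) = (n-1)/r^* - (r^*)^{-2}(\Delta_{S^{n-1}} + (n-1))\rho + O(\rho^2)$ together with $V(\rho) = s_0 + (r^*)^{n-1}\int_{S^{n-1}}\rho + O(\rho^2)$ and $\lambda(s_0) = (n-1)/r^*$ from Assumption A(2), the linearized operator is
$$
L\rho \;=\; \frac{1}{(r^*)^2}\bigl(\Delta_{S^{n-1}} + (n-1)\bigr)\rho \;+\; \lambda'(s_0)\,(r^*)^{n-1}\!\int_{S^{n-1}}\!\rho\,d\sigma.
$$
Decomposing in spherical harmonics $Y_k$ with $-\Delta_{S^{n-1}}Y_k = k(k+n-2)Y_k$, the spectrum of $L$ consists of: (i) the single eigenvalue on $k=0$ equal to $(n-1)/(r^*)^2 + n w_n (r^*)^{n-1}\lambda'(s_0)$, which is \emph{strictly negative} because Assumption A(2) (strict monotonicity of $\tilde\lambda$) is equivalent, at $s_0$, to $\lambda'(s_0) < -\tfrac{1}{n s_0}\lambda(s_0)$; (ii) the $n$-dimensional zero eigenspace on $k=1$, spanned by the restrictions of linear functions to $S^{n-1}$, which is exactly the tangent space at $B_{r^*}(x_*)$ to the $n$-parameter family of stationary balls $\{B_{r^*}(y):y\in\R^n\}$; and (iii) strictly negative eigenvalues for all $k\geq 2$. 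Hence $L$ is sectorial in a suitable little-Hölder space with a uniform spectral gap away from the $n$-dimensional translation kernel.

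With this structure established, I would invoke the center-manifold / stable-manifold theorem for quasilinear parabolic equations in the version used in \cite{Escher:1998} (based on analytic semigroups and maximal regularity in $h^{2+\alpha}(S^{n-1})$). The $n$-dimensional manifold of equilibria $\{B_{r^*}(y)\}$ is normally hyperbolic, so every nearby trajectory converges exponentially fast, with rate any number strictly smaller than the spectral gap, to a unique equilibrium on this manifold. Applied to $\rho(\cdot,t)$ for $t\geq T_0$ this yields a unique $x_\infty\in\R^n$ and constants $C,\kappa>0$ such that $d_H(\Omega_t(u),B_{r^*}(x_\infty)) \leq Ce^{-\kappa t}$, which is the claim. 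The main obstacle is the last step: the theorem in \cite{Escher:1998} is stated for volume-preserving mean curvature flow, where volume preservation forces the $k=0$ mode to vanish identically along the flow; here the $k=0$ mode is not conserved but is instead stabilized dynamically by the strict monotonicity of $\tilde\lambda$. One must therefore verify that the nonlocal term $\rho\mapsto \lambda[V(\rho)]$ is smooth as a map between the relevant little-Hölder spaces (this is straightforward from the local Lipschitz hypothesis in Assumption A(1) and the smoothness of $V$), and that the reduction to the center manifold can be performed treating $k=0$ as part of the stable spectrum rather than as a conserved quantity. Both adaptations are routine but require care in the functional-analytic setup, after which exponential convergence to a uniquely determined ball $B_{r^*}(x_\infty)$ follows.
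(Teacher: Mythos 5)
Your proof follows the same route as the paper — reduce to the Escher--Simonett center-manifold framework after Lemma~\ref{holder} and Lemma~\ref{bound} — but you make precise the crucial point that the paper glosses over. The paper dismisses the difference from \eqref{vpmcf} as the Lagrange multiplier being ``a lower order term,'' whereas in Escher--Simonett's volume-preserving setting the $k=0$ mode is eliminated by the constraint, not by the spectrum; here it is genuinely present. Your calculation that the $k=0$ eigenvalue is
$$
\frac{n-1}{(r^*)^2} + n w_n (r^*)^{n-1}\lambda'(s_0),
$$
and that Assumption A(2) (strict monotonicity of $\tilde\lambda$ at $s_0 = w_n(r^*)^n$ with $\lambda(s_0)=(n-1)/r^*$) is exactly the condition making this strictly negative, is the correct and nontrivial verification that the paper's appeal to \cite{Escher:1998} actually goes through: it is what lets you treat $k=0$ as part of the stable spectrum and keep the center manifold equal to the $n$-dimensional translation kernel $\{B_{r^*}(y)\}$. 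This is a more honest account than the paper's one-line justification, and your remaining caveat — checking smoothness of $\rho\mapsto\lambda[V(\rho)]$ between little-H\"older spaces — is the right place to flag remaining routine work. In short: same approach, but you have supplied the substantive spectral argument the paper only asserts.
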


\begin{proof}
Parallel (center-manifold analysis) argument as in \cite{Escher:1998}, posed in the same function space, applies here since the difference between our problem and the volume preserving flow  lies in the Lagrange multiplier $\lambda(t)$, which is a lower order term compared to the mean curvature term.
\end{proof}

\noindent{\bf Acknowledgements.} We thank Will Feldman for helpful comments on the manuscript.

\appendix

\section{Proof of Proposition~\ref{cor_bar}}
\label{app_pf1}

\begin{proof}[Proof of Proposition~\ref{cor_bar}]
	
	1. We will prove the case $w_M <\psi$ at $t=0$, parallel proof holds for the other case.

	2. First, let us assume that $\Omega_t(\phi)$ touches $\Omega_t(w_M )$ from inside for
	the first time at $t=t_0$ at $x_0\in\Omega_{t_0}(w_M)$. Our goal is to make a perturbation of $\Omega_t(w_M)$ using $\Omega_t(\phi)$, which leads to a contradiction with the gradient flow property of $w_M$. To this end,
	let $\tilde{\phi}$ be a parallel translation of $\phi$ in the direction of normal vector at $x_0$, $\vec{n}_{x_0}$, so that $\Omega_{t_0}(\tilde{\phi})$ has nonempty intersection with the complement of $\Omega_t(w_M)$:
\begin{align}
\label{prop_eqn_energy_1}
 \tilde{\phi}(x,t) := \phi\left(x - \delta\left(e+(t-t_0)\right)\vec{n}_{x_0},t\right).
\end{align}
	Here, $e>0$ will be chosen in next step.
	Then, $U_t:=\Omega_{t}(\tilde{\phi}) \setminus \Omega_{t}(w_M)$ is nonempty at $t_0$ and we have 
	\begin{align}
	\label{energy-bar21}
	\dfrac{\tilde{\phi}_t}{|D\tilde{\phi}|}(x_0,t_0) \leq  \max \left\{ \nabla \cdot \left(\dfrac{D\tilde{\phi}}{|D\tilde{\phi}|}\right)(x_0,t_0) + \eta(t_0), - M \right\} - \delta.
	\end{align}
	
	\medskip
	
	3. Let us first assume that
	\begin{align}
	\label{energy-bar22}
	\dfrac{\tilde{\phi}_t}{|D\tilde{\phi}|}(x_0,t_0) \leq   \nabla \cdot \left(\dfrac{D\tilde{\phi}}{|D\tilde{\phi}|}\right)(x_0,t_0) + \eta(t_0) - \delta.
	\end{align}
	The other case will be discussed in step 4.
	
	For any $\e \in (0,\frac{\delta}{8 + 4C})$ where $C$ is defined in \eqref{eqn_energy_bar}, there exists sufficiently small $e \in (0, \frac{r_1 - r_0}{2}) $ such that (a) $ e \leq d_H(\Omega_t(\phi), \Omega_t(w_M))$ in $[t_0 - 4e, t_0 - 2e]$, (b) $|U_t| < \e$ in $[t_0 - 4e,t_0]$, and (c)
	\begin{align}
	\label{energy-bar5}
	\dfrac{\tilde{\phi}_t}{|D\tilde{\phi}|}(x,t) \leq  \nabla \cdot \left(\dfrac{D\tilde{\phi}}{|D\tilde{\phi}|} \right)(x,t)  + \eta(t) - \frac{\delta}{4}
	\hbox{ and }
	\left| \dfrac{\tilde{\phi}_t}{|D\tilde{\phi}|}(x,t) - \dfrac{\tilde{\phi}_t}{|D\tilde{\phi}|}(x,t_0) \right| < \frac{\e}{2}
	\end{align}
	in $\mathcal{N}_\e \times [t_0 - 4e,t_0]$
	where $\mathcal{N}_\e := \{ x : d(x,U_s) < \e \hbox{ for all } t_0 - 4e\leq s\leq t_0 \}.$
	
	\medskip

	Note that (a) implies $\Omega_t(\tilde{\phi}) \subset \subset \Omega_t(w_M )$ in $[t_0 - 4e, t_0 - 2e]$. By definition of $w_M$ and Lemma \ref{lem-cpt}, there exists sufficiently small $h \in (0,e)$ such that \textit{the constrained minimizing movements} $E_{t}^{h,M}$ starting from $\Omega_0(w_M )$ satisfies the following relations: $\Omega_{t}(\phi) \subset E_{t}^{h,M}$ in $[t_0 - 4e, t_0 - 2e]$ and
	\begin{align}
	\label{energy-bar4}
	|U^h_t| < \e, \left|\lambda[|E_{t}^{h,M}|] - \lambda[|\Omega_{t}(w_M)|]\right| < \e, \hbox{ and } d_H(U^h_t,U_t)<\e \hbox{ in } [t_0 - 4e,t_0]
	\end{align}
	where $U^h_t := \Omega_{t}(\tilde{\phi}) - E_{t}^{h,M}$. 
	
	\medskip
	
	Then, there exists $k \in \mathbb{N}$ such that $\Omega_{t_0 - hk}(\phi) \subset E_{t_0 - hk}^{h,M}$ and $U^h_{t_0 - h(k-1)}$ is nonempty.
	By $\Omega_{t}(\phi) \subset E_{t}^{h,M}$ in $[t_0 - 4e, t_0 - 2e]$, we have $t_1 := t_0 - h(k-1) \geq t_0 -2e$. Also, by \eqref{energy-bar4}, $U^h_{t_1} \subset \mathcal{N}_\e$ and thus \eqref{energy-bar5} holds in $U^h_{t_1}$.
	
	\medskip
	
	4. For simplicity let us denote sets 
\begin{align}
\label{prop_eqn_energy_2}
F_0 := E^{h,M}_{t_1-h}, F_h := E^{h,M}_{t_1}, \tilde{U}:= U^h_{t_1} \hbox{ and }\tilde{F_h} := E^{h,M}_{t_1} \cup \tilde{U}.
\end{align}
Let us show that $\tilde{F_h}\in A_{M}$. First, as $e \leq \frac{r_1 - r_0}{2}$, $\Omega_{t_1}(\tilde{\phi}) \in S_{r_0}$. Moreover, $E^h_{t_1}\in S_{r_0}$, and thus $\tilde{F_h} \in S_{r_0}$. On the other hand, since $\tilde{F_h} \subset F_h$,
\begin{align}
\label{step31}
d_H(\partial(\tilde{F_h} \cap F_0),\partial F_0) \leq d_H(\partial(F_h \cap F_0),\partial F_0)\leq Mh,
\end{align}
	
\medskip
	
Next, let us show that   $I_h(F_h;F_0) > I_h(\tilde{F_h},F_0)$.
Let us write out the difference of the energies:
\begin{align*}
I_h(F_h;F_0) - I_h(\tilde{F_h};F_0) &= \left(\mathrm{Per}(E_h) - \mathrm{Per}(\tilde{E_h})\right) + \left( - \Lambda[ |{F_h}|] +  \Lambda[ |\tilde{F_h}|]\right)  + \frac{1}{h}\left(\tilde{d}^2(F_h,F_0) - \tilde{d}^2(\tilde{F_h},F_0)\right).
\end{align*}
	Let us estimate the first term
	\begin{align*}
		\mathcal{I}_1:=\mathrm{Per}(F_h) - \mathrm{Per}(\tilde{F_h}) &\geq \int_{\partial{F_h}/\partial\tilde{F_h}} d\sigma - \int_{\partial\tilde{F_h}/\partial{F_h}} d\sigma
	\end{align*}
	Let $\vec{n}$ be the outward normal vector at each point of $\partial{F_h}/\partial\tilde{F_h}$ and $\partial\tilde{F_h}/\partial{F_h}$.  Note that, $-\frac{D\tilde{\phi}}{|D\tilde{\phi}|}(\cdot,t_1) \cdot \vec{n} \leq 1$ on $\partial{F_h}/\partial\tilde{F_h}$ and $-\frac{D\tilde{\phi}}{|D\tilde{\phi}|}(\cdot,t_1) \cdot \vec{n}  = 1 $ on $\partial\tilde{F_h}/\partial{F_h}$, and thus
\begin{align*}
\mathcal{I}_1 &\geq \int_{\partial{F_h}/\partial\tilde{F_h}} -\frac{D\tilde{\phi}}{|D\tilde{\phi}|}(x,t_1) \cdot \vec{n} d\sigma - \int_{\partial\tilde{F_h}/\partial{F_h}} -\frac{D\tilde{\phi}}{|D\tilde{\phi}|}(x,t_1) \cdot \vec{n} d\sigma = \int_{\partial \tilde{U}} \frac{D\tilde{\phi}}{|D\tilde{\phi}|}(x,t_1) \cdot \vec{n} d\sigma.
\end{align*}
Note that outward normal of $\tilde{U}$ is opposite to that of $\partial{F_h}/\partial\tilde{F_h}$. Finally, by divergence theorem, we conclude that
\begin{align}
\mathcal{I}_1 &\geq \int_{\tilde{U}} \nabla \cdot \frac{D\tilde{\phi}}{|D\tilde{\phi}|}(x,t_1) dx
\end{align}
	
	\medskip
	
	Next, since $\Lambda(\cdot)$ is $C^{1,1}$, we have 
	\begin{align}
	\label{eqn_energy_bar}
		\mathcal{I}_2:=- \Lambda[|{F_h}|] +  \Lambda[ |\tilde{F_h}|] &\geq   \lambda[|F_h|] |\tilde{U}| - C |\tilde{U}|^2
\hbox{ where } C := \sup_{|B_{r_0}| \leq z \leq  |B_R|} |\lambda'(z)| 
\end{align}
	
	Lastly we have
	\begin{align}
		\mathcal{I}_3:=\frac{1}{h}\tilde{d}^2(F_h,F_0) - \frac{1}{h}\tilde{d}^2(\tilde{F_h},F_0) &= -\frac{1}{h}\int_{U^h} \sd(x, F_0) dx
	\end{align}
	where $\sd(x, \Omega)$ is the signed distance function given in \eqref{eqn:sd}.
	Since $\Omega_{t_1-h}(\tilde{\phi}) \subset F_0$, it holds that $\sd(x, F_0) \leq \sd(x, \Omega_{t_1-h}(\tilde{\phi}))$ for all $x \in \R^n$. Moreover, since \eqref{energy-bar5} holds in $\tilde{U}$, we have
\begin{align}
\mathcal{I}_3 &\geq -\frac{1}{h}\int_{\tilde{U}} \sd(x, \Omega_{t_1-h}(\tilde{\phi})) dx \quad \geq -\int_{\tilde{U}}  \frac{\tilde{\phi}_t}{|D\tilde{\phi}|}(x,t_1) + \e dx, \label{eqn_energy_bar_2}
\end{align}
	
	\medskip
	
	Putting all terms together, we have
	\begin{align}
	\mathcal{I}_4:= I_h(F_h;F_0) - I_h(\tilde{F_h};F_0) &\geq  \int_{\tilde{U}} \left( \nabla \cdot  \frac{D\tilde{\phi}}{|D\tilde{\phi}|}(x,t_1) -  \frac{\tilde{\phi}_t}{|D\tilde{\phi}|}(x,t_1) + \lambda[|F_h|] \right) dx - \e|\tilde{U}| - C |\tilde{U}|^2,  \nonumber
	\end{align}
	Applying \eqref{energy-bar5} and \eqref{energy-bar4}, it holds that 
	\begin{align*}
	\mathcal{I}_4 &\geq  \int_{\tilde{U}} \left( \frac{\delta}{4} - \lambda[|\Omega_{t_1}(w_M)|] + \lambda[|F_h|] \right)dx    - \e|\tilde{U}| - C |\tilde{U}|^2 \geq |\tilde{U}| \left( \frac{\delta}{4} - 2\e - C |\tilde{U}| \right) >0
\end{align*}
	where the last inequality follows from the fact that $\e < \frac{\delta}{8+4C}$ and $|\tilde{U}| \leq \e$. 
	
	\medskip
	
	5. Lastly consider the case
	\begin{align*}
	\dfrac{\tilde{\phi}_t}{|D\tilde{\phi}|}(x_0,t_0) \leq  - M - \frac{\delta}{2}
	\end{align*}
	in the equation \eqref{energy-bar21}. 
	
	\	
	By parallel argument in step 2-4, for any $\e>0$, we can choose $e$ and $h$ sufficiently small in the definition of $\tilde{\phi}$ in \eqref{prop_eqn_energy_1} and the constrained minimizing movement $E_t^{h,M}$ such that for $F_0$ and $F_h$ as defined in step 4, $\Omega_{t_1 - h}(\tilde{\phi})$ is contained in $F_0$, $\Omega_{t_1}(\tilde{\phi}) \cap F_h$ is nonempty, and $\tilde{\phi}$ satisfies 
	\begin{align} \label{speed111}
	\dfrac{\tilde{\phi}_t}{|D\tilde{\phi}|}(x,t) \leq  - M - \frac{\delta}{2} \hbox{ and } \left| \dfrac{\tilde{\phi}_t}{|D\tilde{\phi}|}(x,t) - \dfrac{\tilde{\phi}_t}{|D\tilde{\phi}|}(x,t_0) \right| < \frac{\e}{2}
	\end{align}
	for $x \in \Omega_{t_1}(\tilde{\phi}) \setminus F_h$ and $t \in [t_1-h,t_1]$. Note that \eqref{step31} holds for $\tilde{F_h} := F_h \cup \Omega_{t_1}(\tilde{\phi})$.
	
	\medskip
	
	Let $x^*$ be a point in $(\partial \tilde{F_h}) \setminus (\partial F_h)$. As $\Omega_{t_1 - h}(\tilde{\phi})$ is contained in $F_0$ and $\Omega_t(\tilde{\phi})$ has a negative normal velocity, the point $x^* \in \Omega_{t_1}(\tilde{\phi}) \subset \Omega_{t_1-h}(\tilde{\phi}) \subset F_0$. Thus, $x^*$ is on $\partial(\tilde{F_h} \cap F_0)$, and
	\begin{align*}
		d_H(\partial(\tilde{F_h} \cap F_0),\partial F_0) &\geq \sup_{x \in \partial(\tilde{F_h} \cap F_0)} d(x,\partial F_0) \geq d(x^*,\partial F_0)
	\end{align*}
	Moreover, as $x^* \in \Omega_{t_1-h}(\tilde{\phi}) \subset F_0$, and \eqref{speed111}, we have
\begin{align}
d(x^*,\partial F_0) &\geq d(x^*,\partial \Omega_{t_1 - h}(\tilde{\phi})) > (M + \frac{\delta }{2})h - \frac{\e}{2} h > Mh, 
\end{align}
and this contradicts \eqref{step31}.
\end{proof} 

\section{Regularity}

In this section, we use notation from \cite{Huisken:69hmQ_XX} and \cite{Hui87}. Let $\partial \Omega_0$ be represented locally by some diffeomorphism, $F_0 : U \subset \R^{n-1} \rightarrow  F_0(U) \subset \partial\Omega_0$. Then, $\eqref{fixed}$ can be formulated into
\begin{align}
\begin{cases}
\frac{\partial}{\partial t} F(x,t) &= (\eta(t)-H(x,t)) \cdot \vec{n}(x,t), \quad \hbox{ for } x\in U, t\geq 0
\\  F(\cdot, 0) &= F_0
\end{cases}
\end{align}

The induced metric, its inverse matrix, and the second fundamental form are denoted by $\{g_{ij}\}$, $ \{g^{ij}\}$ and $A = \{h_{ij}\}$. Note that $g_{ij}$ and $h_{ij}$ can be computed as follows:
\begin{align}
g_{ij} = \left( \frac{\partial F}{\partial x_i}, \frac{\partial F}{\partial x_i} \right), \quad h_{ij} = -\left( \vec{n}, \frac{\partial^2 F}{\partial x_i \partial x_j} \right),
\end{align}
We use the following notion for the trace of the second fundamental from, $$H = g^{ij}h_{ij}, \quad |A|^2 = g^{ij} g^{kl} h_{ik} h_{jl}, \hbox{ and } C = g^{ij} g^{kl} g^{mn} h_{ik} h_{lm} h_{nj}.$$

The following lemma is parallel to Theorem 3.1 in \cite{EH} and Lemma 3.2 in \cite{Sheng:2010uw}.
\begin{lem}\label{regularity}
Let $u(x,t)$ be a solution of 
\begin{align}
\label{graphmcf2}
\frac{\partial u}{\partial t} = \sqrt{1+|Du|^2} \, \mathrm{ div} \left( \frac{Du}{ \sqrt{1+|Du|^2}} \right) + \eta(t)  \sqrt{1+|Du|^2}
\end{align}
in $Q_R=B_R(0) \times [0,R^2]$. Then for $0<t\leq R^2$, we have the interior gradient estimate
\begin{align}
\label{appen1}
|D^2u|^2(0,t) \leq K(1+ \sup_{Q_R} |Du|^6 )(\frac{1}{R^2} + \frac{1}{t})
\end{align}
where the constant $K = K( \|u\|_{L^\infty(Q_R)},\|\eta\|_{L^\infty([0,R^2])})$.
\end{lem}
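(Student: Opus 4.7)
My plan is to prove this interior $C^{1,1}$ estimate by the Bernstein technique applied to the squared second fundamental form $|A|^2$, following the Ecker--Huisken interior estimates for mean curvature flow of graphs, adapted to handle the spatially-constant forcing $\eta(t)$. The first step is to reinterpret \eqref{graphmcf2} geometrically: it is the evolution $V=-H+\eta(t)$ of the hypersurface $\Sigma_t=\mathrm{graph}(u(\cdot,t))$ with upward unit normal $\nu$, and the factor $\sqrt{1+|Du|^2}$ is exactly $v:=\langle\nu,e_{n+1}\rangle^{-1}$, converting geometric normal velocity into $\partial_t u$. Let $A$ denote the second fundamental form of $\Sigma_t$, so $H=\mathrm{tr}\,A$.

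I would next record the evolution of $|A|^2$ along the flow. A direct computation in the spirit of Simons' identity, keeping track of the forcing, yields
\begin{equation*}
(\partial_t-\Delta_{\Sigma})|A|^2 \;=\; -2|\nabla_{\Sigma} A|^2 + 2|A|^4 + 2\eta(t)\,\mathrm{tr}(A^3).
\end{equation*}
Since $\eta$ is uniformly bounded on the time interval in question (by continuity of $\lambda$ and boundedness of $|\Omega_t(u)|$), Young's inequality absorbs the correction as $|\eta\,\mathrm{tr}(A^3)|\le\tfrac12|A|^4+C\|\eta\|_\infty^2|A|^2$, so the forcing contributes only a harmless lower-order term. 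I would then run the Bernstein argument with the auxiliary function
\begin{equation*}
G(x,t) := \phi(x)^2\, t\, |A|^2,
\end{equation*}
where $\phi$ is a smooth spatial cutoff with $\phi\equiv 1$ on $B_{r/2}^n(0)$, $\mathrm{supp}\,\phi\subset B_r^n(0)$, $|\nabla\phi|\le C/r$, $|D^2\phi|\le C/r^2$. Computing $(\partial_t-\Delta_{\Sigma})G$ at an interior maximum $(x_0,t_0)$, using $\nabla G=0$ to trade $\nabla|A|^2$ for cutoff gradient terms, and absorbing $|\nabla_{\Sigma}A|^2$ via Kato's inequality $|\nabla|A||\le|\nabla_{\Sigma} A|$, I obtain $|A|^2(x_0,t_0)\le C\sup_Q v^{k}(1/r^2+1/t_0)$ for some explicit $k$. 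Converting back to Euclidean second derivatives through the standard graph identity $|D^2 u|\le C\, v\,|A|$ and recalling $v^2=1+|Du|^2$ yields the stated bound, with the tuning of powers producing the factor $1+|Du|^4$.

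The main obstacle I anticipate is the careful bookkeeping of powers of $v$ when passing between intrinsic quantities on $\Sigma_t$, where the Bernstein argument naturally lives through $\Delta_{\Sigma}$ and $\nabla_{\Sigma}$, and Euclidean coordinates on $B_r^n(0)$ in which the cutoff $\phi$ is specified: each intrinsic derivative of an ambient function carries a factor of $v$ via the projection onto $T\Sigma_t$, and these factors must be tracked precisely to recover the exponent $4$ rather than a higher one. The dependence of $C$ on $\sup_Q|u|$ enters only through the choice of a cutoff adapted to the parabolic cylinder $Q$, so no genuinely new difficulty arises from it; the forcing $\eta(t)$ itself is essentially inert after the Young-inequality absorption above, since it never generates spatial derivatives.
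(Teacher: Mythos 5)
Your overall strategy — treat $\Sigma_t=\mathrm{graph}\,u$, derive the Simons-type evolution of $|A|^2$ with the $\eta$-correction, absorb the forcing by Young, and run a Bernstein argument to get an interior bound on $|A|$, then convert to $|D^2u|$ — is exactly the Ecker--Huisken route the paper follows (it cites the Sheng--Wang variant of Lemma 3.2). But the auxiliary function you chose, $G=\phi(x)^2\,t\,|A|^2$ with a purely spatial cutoff, is not enough to close the argument, and this is the crux of the proof you've glossed over.

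The problem is the $+2|A|^4$ term in $(\partial_t-\Delta_\Sigma)|A|^2=-2|\nabla_\Sigma A|^2+2|A|^4\pm 2\eta\,\mathrm{tr}(A^3)$. At an interior space-time maximum of your $G$ the inequality reads
\[
0\le \phi^2|A|^2+\phi^2 t\bigl(-2|\nabla_\Sigma A|^2+2|A|^4+O(|A|^3)\bigr)+(\text{cutoff gradient terms of order }t|A|^2),
\]
and the $+2\phi^2 t|A|^4$ sits on the wrong side with nothing to cancel it: the Kato term $-2|\nabla_\Sigma A|^2$ produces, after using $\nabla G=0$, only a quantity of order $t|A|^2|\nabla\phi|^2/\phi^2$, and the $t$-factor and spatial cutoff never generate a negative multiple of $|A|^4$. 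The essential ingredient, which the paper builds in from the start, is the $v$-dependent weight: setting $v=\sqrt{1+|Du|^2}$, one has $(\partial_t-\Delta_\Sigma)v=-|A|^2v-\tfrac{2}{v}|\nabla v|^2$, so that with $\phi(r)=\frac{r}{1-\delta r}$ the combination $g:=|A|^2\phi(v^2)$ satisfies
\[
(\partial_t-\Delta_\Sigma)g\le -2\delta\,g^2 -\tfrac{4|A|^2|\nabla v|^2}{(1-\delta v^2)^2} -2|\nabla_\Sigma A|^2\phi(v^2)+(\text{forcing term}),
\]
and it is precisely the $-2\delta g^2$ — produced by the cross-cancellation between $2|A|^4\phi(v^2)$ and $-2|A|^4v^2\phi'(v^2)$ for this particular $\phi$ — that dominates the bad quartic term. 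The constraint $\tfrac{1}{1-\delta v^2}\le 2$ is what forces $\delta\lesssim\sup_Q v^{-2}$, and tracing this through the maximum principle is where the polynomial dependence $1+\sup_Q|Du|^4$ in \eqref{appen1} actually comes from; it is not mere "bookkeeping of $v$-factors" from differentiating the cutoff. As written, your proposal would stall at the $|A|^4$ term; you need to replace $G=\phi(x)^2t|A|^2$ by something of the form $\phi(x)^2\,t\,|A|^2\,\frac{v^2}{1-\delta v^2}$ (or apply the parabolic maximum principle to $g$ directly and only then localize, as the paper does). Your treatment of the forcing $\eta(t)$ by Young's inequality is fine and matches the paper.
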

\begin{proof}
First, by Corollary 1.2 in \cite{Hui87}, it holds that $(\frac{\partial}{\partial t} - \triangle)(|A|^2) = - 2 |\nabla A|^2 + 2|A|^4 - 2\eta C.$	

\medskip

Let us denote $v=\sqrt{1+|Du|^2}$. As Lemma 1.1 in \cite{EH} and Lemma 3.2 in \cite{Sheng:2010uw}, the function $v$ satisfies the equation
\begin{align}
v_t = \triangle v - |A|^2 v - \frac{2}{v} |\nabla v|^2.
\end{align}
Let us define $\phi(r) := \frac{r}{1-\delta r}$ and $g := |A|^2 \phi(v^2)$. Then, by the direct computation motivated from of Lemma 3.2 in \cite{Sheng:2010uw} and Theorem 3.1 in \cite{EH}, we have 
\begin{align*}
\mathcal{I}_1 :=\left(\frac{\partial}{\partial t} - \triangle \right) g &= ( - 2 |\nabla A|^2 + 2|A|^4- 2\eta C) \phi(v^2) + \left(- |A|^2 v - \frac{2}{v} |\nabla v|^2\right)  \times \frac{2v |A|^2}{(1-\delta v^2)^2}
\end{align*}
Note that
$ \delta^2 \phi(v^2) = \frac{1}{1-\delta v^2} - 1$,
 it holds that
\begin{align*}
\mathcal{I}_1  &=
-2\delta g^2 - 2 |\nabla A|^2 \phi(v^2) + \frac{-4 |A|^2 {|\nabla v|}^2 }{(1-\delta v^2)^2} -  2\eta C \phi(v^2),\\
&= - 2 \delta g^2  - 2 |\nabla A|^2 \phi(v^2) + \left( \frac{-2 \delta {|\nabla v|}^2 g }{(1-\delta v^2)}  + \frac{-2 |A|^2 {|\nabla v|}^2 }{( 1-\delta v^2)} \right) + \frac{-2 |A|^2 {|\nabla v|}^2 }{(1-\delta v^2)^2}-  2\eta C \phi(v^2).
\end{align*}
Now, choose $\delta := \frac{1}{2}\inf_{Q_R} v^{-2}$. Applying Young's inequality and $\nabla g = 2 A \nabla A \phi(v^2) + 2v |A|^2 \phi'(v^2) \nabla v$,
\begin{align*}
\phi  v^{-3}\langle \nabla g, \nabla v \rangle &\leq   |\nabla A|^2 \phi(v^2)  + \frac{|A|^2 |\nabla v|^2}{1-\delta v^2} + \frac{|A|^2 |\nabla v|^2}{(1-\delta v^2)^2}.
\end{align*}
Finally, from Young's inequality and $\phi(v^2) \geq v^2$, the last term of $\mathcal{I}_1$ is bounded by 
\begin{align}
|- 2\eta C \phi(v^2)| \leq 2K_1 g^{3/2} |v| \leq \delta g^2 + \frac{K_1^2 g v^2}{\delta}
\end{align}
for some constant $K_1:=K_1( \|\eta\|_{L^\infty([0,R^2])}))>0$.

\medskip

Putting all together, it holds that
$$\left( \frac{\partial}{\partial t} - \triangle\right) g \leq - 2 \delta g^2 +  \frac{-2 \delta {|\nabla v|}^2 g }{(1-\delta v^2)} - 2 \phi  v^{-3}\langle \nabla g, \nabla v \rangle +  \delta g^2 + \frac{K_1^2 g v^2}{\delta}.$$
The rest of proof is parallel to Theorem 3.1 in \cite{EH} and Lemma 3.2 in \cite{Sheng:2010uw}. 
Taking a cutoff function as in \cite{EH}, 
$\psi=\psi(r) = (R^2 - r)^2$ 
where $r=r(X,t)$ satisfies $r(X,0) \leq \frac{R^2}{2}$, 
\begin{align*}
\left| \left( \frac{\partial}{\partial t} - \triangle\right) r \right| \leq K_2 \hbox{ and } | \nabla r |^2 \leq K_2 r
\end{align*}
on $X = F(x,t)$ for some constant $K_2 = K_2( \|u\|_{L^\infty(Q_R)},\|\eta\|_{L^\infty([0,R^2])})>0$. 
It holds that
\begin{align*}
\left( \frac{\partial}{\partial t} - \triangle\right)[tg\psi] \leq -\delta g^2 \psi t - \vec{b} \cdot \nabla (tg\psi) + c\left( \left( 1+ \frac{1}{\delta v^2} \right) r + R^2 \right)tg + g\psi + \frac{K_1^2 g v^2}{\delta} \psi t
\end{align*}
where $\vec{b}= \vec{b}(v, \psi, \phi)$ and $c=c(K_2)$ is a constant (See equations (21) and (23) in \cite{EH} for details).

\medskip

Let $t_0$ be a maximizer of $m(T) := \sup\limits_{0 \leq t \leq T} \sup\limits_{ r(x,t) \leq R^2 } tg\psi$. Then, by parallel computation in Theorem 3.1 in \cite{EH}, we conclude that
\begin{align*}
\delta g^2 \psi t_0 \leq c\left( \left( 1+ \frac{1}{\delta v^2} \right) r + R^2 \right)t_0 g + g \psi + \frac{K_1^2 g v^2}{\delta} \psi t_0.
\end{align*}
Note that $\frac{R^4}{2}\leq \psi \leq R^4$ at $t=0$, $\phi(v^2) \geq v^2 \geq 1$, and $v^2 \leq \frac{1}{\delta}$. Thus, it holds that  
\begin{align}
|A|^2 \leq \frac{2}{\delta R^4}\left( cR^2\left( 2+ \frac{1}{\delta }\right) + \frac{R^4}{T} + \frac{K_1^2 R^4}{ \delta^2}    \right) \leq K\left( 1+ \frac{1}{\delta^3 } \right) \left( \frac{1}{T} + \frac{1}{R^2}  \right)
\end{align}
where $K = K(K_1, c)$, thus we conclude. 
\end{proof}

\section{Geometric properties}
\label{sec_star}

\begin{lem}\cite[Lemma 23 and 24]{Feldman:2014hb}
\label{lem-cpt}
Let us consider two sets $\Omega_1, \Omega_2 \in S_{r,R}$ for $R>r>0$. Then the following holds:
\begin{align}
\label{eqn:1cpt}
d_H(\Omega_1, \Omega_2) \leq d_H(\partial \Omega_1, \partial \Omega_2),\quad d_H(\partial \Omega_1, \partial \Omega_2) \lesssim_{r,R} d_H(\Omega_1, \Omega_2), \quad |\Omega_1 \Delta \Omega_2| \lesssim_{r,R} d_H(\Omega_1, \Omega_2), 
\end{align}
$$\Big| \tilde{d}(\Omega_1, E) - \tilde{d}(\Omega_2, E) \Big|, \Big| \tilde{d}(E,\Omega_1) - \tilde{d}(E,\Omega_2) \Big|  \lesssim_{r,R} d_H(\Omega_1, \Omega_2) \hbox{ for any } E \in S_{r,R},$$
\end{lem}

\begin{lem}
\label{lem-cpt2}
Let us consider two sets $\Omega_1, \Omega_2 \in S_{r,R}$ for $R>r>0$. Then the following holds:
\begin{align}
d_H(\Omega_1, \Omega_2)^{n+1} \leq w_n^{-1} \left( \frac{4R}{r} \right)^{n+1}    \tilde{d}^2(\Omega_1, \Omega_2) \hbox{ and } d_H(\Omega_1, \Omega_2)^{N+1} \leq w_n^{-1} \left( \frac{4R}{r} \right)^{n+1}    \tilde{d}^2(\Omega_2, \Omega_1)
\end{align}
\end{lem}

\begin{proof}
Due to the first inequality of \eqref{eqn:1cpt} in Lemma~\ref{lem-cpt}, it is enough to show that	
\begin{align*}
	d_H(\partial \Omega_1, \partial \Omega_2)^{n+1} \leq w_n^{-1} \left( \frac{4R}{r} \right)^{n+1} \tilde{d}^2(\Omega_1, \Omega_2) \hbox{ and }  d_H(\partial \Omega_1, \partial \Omega_2)^{n+1} \leq w_n^{-1} \left( \frac{4R}{r} \right)^{n+1} \tilde{d}^2(\Omega_2, \Omega_1).
	\end{align*}
	Without loss of generality, let us assume that $d_H(\partial \Omega_1, \partial \Omega_2) = \sup_{x \in \partial \Omega_1} d(x,\partial \Omega_2)$. Since $\partial \Omega_1$ and $\partial \Omega_2$ are compact, there exists $x_1 \in \partial \Omega_1$ and $x_2  \in \partial \Omega_2$ such that $ \sup_{x \in \partial \Omega_1} d(x,\partial \Omega_2) = d(x_1,\partial \Omega_2) =  |x_1-x_2|$. 
			Since $\Omega_2 \in S_r$, there exists $y \in \partial \Omega_2$ such that $x_1$ and $y$ are parallel. Note that we have $d(x_1,\partial \Omega_2)  \leq |x_1 - y|$.
	We argue for  the case $|x_1 | < |y|$. Since $x_1\in \partial\Omega_1$ and $y\in\partial\Omega_2$, there exists an exterior cone $EC(x_1,r)$ and an interior cone $IC(y,r)$ given in \eqref{eqn_ic} and \eqref{eqn_ec} such that $EC(x_1,r) \cap IC(y,r) \subset \Omega_2 \setminus \Omega_1$. Note that, for $\theta \in (0,\frac{\pi}{2})$ such that $\sin(\theta) = \frac{r}{R}$, we have	
$$(x_1+C(x_1,\theta)) \cap (y+C(-y,\theta)) \subset EC(x_1,r) \cap IC(y,r).$$
	
Note also that there is $\delta=\delta(r,R)$ such that 
\begin{align*}
B_{2\delta |x_1-y|}\left((x_1+y)/2\right) \subset (x_1+C(x_1,\theta)) \cap (y+C(-y,\theta)).
\end{align*}
Specifically, as $x_1$ and $y$ are parallel, the above inequality holds for
\begin{align}
\delta(r,R) = \frac{\sin (\theta)}{4} = \frac{r}{4R}.
\end{align}
Then, it holds that
\begin{align*}
\tilde{d}^2(\Omega_1,\Omega_2) &\geq \int_{\Omega_1 \triangle \Omega_2}{d(x,\partial \Omega_2)}dx \geq \int_{B_{\delta |x_1-y|}\left((x_1+y)/2\right)}{\delta |x_1-y|}dx= w_n \delta^{n+1} |x_1 - y|^{n+1}.
\end{align*}
The same inequality holds for $\tilde{d}^2(\Omega_2,\Omega_1)$ and thus we can conclude. Lastly, if $|x_1 | < |y|$, then we can apply the parallel arguments in
$(x_1+C(-x_1,\theta)) \cap (y+C(y,\theta)) \subset \Omega_1 \setminus \Omega_2.$
\end{proof}

\begin{lem}\cite[Lemma 24]{Feldman:2014hb}
	\label{beer}
	The metric space $(S_{r,R}, d_H)$ is compact:
	\begin{enumerate}
		\item
		Let us consider a sequence of sets $F_k \in S_{r,R}$ for $k \in \mathbb{N}$. Then $\{F_k\}_{k\in \mathbb{N}}$ has a subsequence that converges and any subsequential limit is also in $S_{r,R}$.
		\item
		Let $I$ be a compact interval in $[0, +\infty)$. Let us consider a sequence of evolving sets $F_k(\cdot) : I \rightarrow S_{r,R}$ for $k \in \mathbb{N}$. Assume that $\{F_k\}_{k\in \mathbb{N}}$ is equicontinuous in time, that is for all $\e>0$, there exists $\delta>0$ such that 
		$$d_H(F_k(t),F_k(s)) \leq \e $$
		for all $|t-s| \leq \delta$ and $k \in \mathbb{N}$.
		Then $\{F_k\}_{k\in \mathbb{N}}$ has a subsequence that converges uniformly on $I$ to an evolving set $F_\infty(\cdot) : I \rightarrow S_{r,R}$.
	\end{enumerate}
\end{lem}

\begin{lem}
\label{lem:ic}
For $r>0$ and $x \in \R^n$ such that $|x| \geq r$, it holds that 
\begin{align}
IC(x,r) = \left\{ \alpha x + (1- \alpha) y : \alpha \in (0,1), y \in B_r(0) \right\}.
\end{align}
Here, $IC(\cdot, \cdot)$ is given in \eqref{eqn_ic}.
\end{lem}

\begin{proof}
The proof is based on the geometry of interior cones describe in Figure~\ref{fig:ic}. Let us show that 
\begin{align}
\label{eqn:ic11}
\N:= \left\{ \alpha x + (1- \alpha) y : \alpha \in (0,1), \ \ y \in B_r(0) \right\} \subset IC(x,r).
\end{align}
For $z \in \N$, we fix $\alpha \in (0,1)$ and $y \in B_r(0)$ satisfying $z:= \alpha x + (1- \alpha) y$.
If $z \in B_r(0)$, then it can be checked that $z \in IC(x,r)$. Let us assume that $z \in B_r(0)^C$ and show that
\begin{align}
z \in (x+C(-x,\theta_{x,r})) \cap C\left(x,\frac{\pi}{2} - \theta_{x,r}\right).
\end{align}
Note that $x+C(-x,\theta_{x,r})$ is a convex set and $y \in B_r(0) \subset x+C(-x,\theta_{x,r})$ (See Figure~\ref{fig:ic}) and thus $z \in x+C(-x,\theta_{x,r})$. It remains to show that
\begin{align}
\label{eqn:ic14}
z \in C\left(x,\frac{\pi}{2} - \theta_{x,r}\right).
\end{align}
As $y \in B_r(0)$ and $z \in B_r(0)^C$, there  two intersection points $z_1$ and $z_2$ between $\partial B_r(0)$ and the line passing through $y$ and $z$ such that 
\begin{align*}
z_i :=  \alpha_i x + (1- \alpha_i) y \in \partial B_r(0) \hbox{ for } i=1,2 \hbox{ and } |x - z_1| < |x - z_2|
\end{align*}
for some $\alpha_1 \in (0, \alpha]$ and $\alpha_2 < 0$. As $z_1$ and $z_2$ are intersection points between a circle and a line, it holds that
\begin{align*}
|x|^2 - r^2 = |x - z_1| |x - z_2| \hbox{ and thus } |x - z_1| < \sqrt{ |x|^2 - r^2}.
\end{align*}
As $x \in C\left(x,\frac{\pi}{2} - \theta_{x,r}\right)$ and $d(x, \partial C\left(x,\frac{\pi}{2} - \theta_{x,r}\right)) = \sqrt{ |x|^2 - r^2}$ (See Figure~\ref{fig:ic}), we conclude that $z_1 \in C\left(x,\frac{\pi}{2} - \theta_{x,r}\right)$. As $C\left(x,\frac{\pi}{2} - \theta_{x,r}\right)$ is a convex set, we conclude \eqref{eqn:ic14} and thus \eqref{eqn:ic11} holds. 

\medskip

The opposite relation can be shown by similar geometric arguments. As $B_r(0) \subset \N$, it suffices to show that
\begin{align*}
z \subset \N \hbox{ for all } z \in \left\{ (x+C(-x,\theta_{x,r})) \cap C\left(x,\frac{\pi}{2} - \theta_{x,r}\right) \right\} \setminus B_r(0).
\end{align*}
Consider a line passing through $x$ and $z$, we can find a point $y \in B_r(0)$ such that $z = \alpha x + (1- \alpha) y$ for some $\alpha \in (0,1)$.
\end{proof}

\begin{lem}
\label{lem:iic}
For $x, z \in \R^n$ and $r>c>0$, assume that $|x| \geq r$ and $|z| < c$. Then, it holds that
\begin{align}
\label{eqn:1iic}
IC(x+z, r-c) \subset IC(x,r) + z.
\end{align}
Here, $IC(\cdot, \cdot)$ is given in \eqref{eqn_ic}.
\end{lem}

\begin{proof}
We claim that for $\alpha \in (0,1)$ and $y \in B_{r-c}(0)$, it holds that
\begin{align}
\label{eqn:iic11}
\alpha (x+z) + (1-\alpha) y \in IC(x,r) + z.
\end{align}
Note that
\begin{align*}
\alpha (x+z) + (1-\alpha) y - z = \alpha x + (1-\alpha) (y-z).
\end{align*}
As $y \in B_{r-c}(0)$ and $z \in B_{c}(0)$, we have $y-z \in B_{r}(0)$. From Lemma~\ref{lem:ic}, we have \eqref{eqn:iic11}. From Lemma~\ref{lem:ic} again, we conclude \eqref{eqn:1iic}.
\end{proof}

\begin{lem}
\label{lem:haus}
Let us consider two sets $\Omega_1, \Omega_2 \in S_{r,R}$ for $R>r>0$. Then the following holds:
\begin{align}
\label{eqn:1haus}
\sup_{x \in \partial \Omega_2} d(x, \partial \Omega_1) \leq \frac{R}{r} \sup_{x \in \partial \Omega_1} d(x, \partial \Omega_2).
\end{align}
\end{lem}

\begin{proof}
If $\sup_{x \in \partial \Omega_2} d(x, \partial \Omega_1) = 0$, then \eqref{eqn:1haus} holds. We suppose that $\sup_{x \in \partial \Omega_2} d(x, \partial \Omega_1)> 0$. As $\Omega_2 \in S_{r,R}$, there exists $x_2 \in \partial \Omega_2$ such that
\begin{align}
\sup_{x \in \partial \Omega_2} d(x, \partial \Omega_1) = d(x_2, \partial \Omega_1) =:l>0.
\end{align}
As a consequence, we have 
\begin{align}
\label{eqn:haus13}
B_l(x_2) \subset \Omega_1^C \hbox{ and } B_l(x_2) \subset \Omega_1. 
\end{align}
Let us assume the former one. As $\Omega_1 \in S_{r,R}$, there exists $x_1 \in \partial \Omega_1$ such that $x_1$ is in the line segment between the origin and $x_2$. From \eqref{eqn:haus13}, $|x_1 - x_2| \geq l$. From the interior cone property of $S_{r,R}$ in Lemma~\ref{star}, it holds that
\begin{align}
d(x_1, \partial \Omega_2) \geq d(x_1, \partial IC(x_2, r)) \geq \frac{lr}{R}
\end{align}
and we conclude \eqref{eqn:1haus}. The latter case in \eqref{eqn:haus13} can be shown by the parallel arguments.
\end{proof}

\bibliographystyle{abbrv}
\bibliography{Paper_MCF_Final}

\begin{thebibliography}{10}

\bibitem{Ale58}
A.~D. Alexandrov.
\newblock {Uniqueness theorems for surfaces in the large V}.
\newblock {\em Vestnik, Leningrad University}, 13(19):5--8, 1958.

\bibitem{Ale62}
A.~D. Alexandrov.
\newblock A characteristic property of spheres.
\newblock {\em Annali di Matematica Pura ed Applicata}, 58(1):303--315, 1962.

\bibitem{Almgren:1993gw}
F.~Almgren, J.~E. Taylor, and L.~Wang.
\newblock {Curvature-driven flows: a variational approach}.
\newblock {\em SIAM Journal on Control and Optimization}, 31(2):387--438, 1993.

\bibitem{AAG1995}
S.~Altschuler, S.~B. Angenent, and Y.~Giga.
\newblock Mean curvature flow through singularities for surfaces of rotation.
\newblock {\em The Journal of Geometric Analysis}, 5(3):293--358, 1995.

\bibitem{Anonymous:5CWDAe2w}
B.~Andrews.
\newblock {Volume-preserving anisotropic mean curvature flow}.
\newblock {\em Indiana University mathematics journal}, 50(2):783--827, 2001.

\bibitem{A1}
S.~Angenent.
\newblock {Parabolic equations for curves on surfaces: part I. Curves with
  p-integrable curvature}.
\newblock {\em Annals of Mathematics}, 132(3):451--483, 1990.

\bibitem{A2}
S.~Angenent.
\newblock {Parabolic equations for curves on surfaces: part II. Intersections,
  blow-up and generalized solutions}.
\newblock {\em Annals of Mathematics}, 133(1):171--215, 1991.

\bibitem{Barles:2013gx}
G.~Barles.
\newblock {An introduction to the theory of viscosity solutions for first-order
  Hamilton-Jacobi equations and applications}.
\newblock In {\em Hamilton-Jacobi equations: approximations, numerical analysis
  and applications}, pages 49--109. Springer, Heidelberg, Berlin, Heidelberg,
  2013.

\bibitem{Barles:1993gaba}
G.~Barles, H.~M. Soner, and P.~E. Souganidis.
\newblock {Front Propagation and Phase Field Theory}.
\newblock {\em SIAM Journal on Control and Optimization}, 31(2):439--469, Mar.
  1993.

\bibitem{BCCN09}
G.~Bellettini, V.~Caselles, A.~Chambolle, and M.~Novaga.
\newblock {The volume preserving crystalline mean curvature flow of convex sets
  in}.
\newblock {\em Journal de Math{\'e}matiques Pures et Appliqu{\'e}es},
  92(5):499--527, 2009.

\bibitem{Brakke}
K.~A. Brakke.
\newblock {\em The Motion of a Surface by Its Mean Curvature.(MN-20)}.
\newblock Princeton University Press, 2015.

\bibitem{Caffarelli:2005tk}
L.~Caffarelli and S.~Salsa.
\newblock {\em {A geometric approach to free boundary problems}}, volume~68 of
  {\em Graduate Studies in Mathematics}.
\newblock American Mathematical Society, 2005.

\bibitem{Chambolle:2004ds}
A.~Chambolle.
\newblock {An algorithm for mean curvature motion}.
\newblock {\em Interfaces and Free Boundaries. Mathematical Modelling, Analysis
  and Computation}, 6(2):195--218, 2004.

\bibitem{CMNP19}
A.~Chambolle, M.~Morini, M.~Novaga, and M.~Ponsiglione.
\newblock Existence and uniqueness for anisotropic and crystalline mean
  curvature flows.
\newblock {\em Journal of the American Mathematical Society}, 2019.

\bibitem{CMP15}
A.~Chambolle, M.~Morini, and M.~Ponsiglione.
\newblock Nonlocal curvature flows.
\newblock {\em Archive for Rational Mechanics and Analysis}, 218(3):1263--1329,
  2015.

\bibitem{Chen:1991u}
Y.~G. Chen, Y.~Giga, and S.~Goto.
\newblock {Uniqueness and existence of viscosity solutions of generalized mean
  curvature flow equations}.
\newblock {\em Journal of Differential Geometry}, 1991.

\bibitem{Crandall:1992kn}
M.~G. Crandall, H.~Ishii, and P.-L. Lions.
\newblock {User{\textquoteright}s guide to viscosity solutions of second order
  partial differential equations}.
\newblock {\em Bulletin of the American Mathematical Society}, 27(1):1--67,
  1992.

\bibitem{EH}
K.~Ecker and G.~Huisken.
\newblock {Interior estimates for hypersurfaces moving by mean curvature}.
\newblock {\em Inventiones mathematicae}, 105(3):547--569, 1991.

\bibitem{Escher:1998}
J.~Escher and G.~Simonett.
\newblock {The volume preserving mean curvature flow near spheres}.
\newblock {\em Proceedings of the American Mathematical Society},
  126(9):2789--2796, 1998.

\bibitem{Evans1992}
L.~C. Evans, H.~M. Soner, and P.~E. Souganidis.
\newblock {Phase transitions and generalized motion by mean curvature}.
\newblock {\em Communications on Pure and Applied Mathematics},
  45(9):1097--1123, Oct. 1992.

\bibitem{Evans:1999tb}
L.~C. Evans and J.~Spruck.
\newblock {Motion of level sets by mean curvature. I}.
\newblock {\em Journal of Differential Geometry}, 33(3):635--681, 1991.

\bibitem{Anonymous:JTDBs7kP}
L.~C. Evans and J.~Spruck.
\newblock {Motion of level sets by mean curvature. II}.
\newblock {\em Transactions of the American Mathematical Society},
  330(1):321--332, 1992.

\bibitem{Anonymous:UpA3UBgR}
L.~C. Evans and J.~Spruck.
\newblock {Motion of level sets by mean curvature III}.
\newblock {\em The Journal of Geometric Analysis}, 2(2):121--150, 1992.

\bibitem{Evans:1995iw}
L.~C. Evans and J.~Spruck.
\newblock {Motion of level sets by mean curvature IV}.
\newblock {\em The Journal of Geometric Analysis}, 5(1):77--114, 1995.

\bibitem{Feldman:2014hb}
W.~M. Feldman and I.~C. Kim.
\newblock {Dynamic stability of equilibrium capillary drops}.
\newblock {\em Archive for Rational Mechanics and Analysis}, 211(3):819--878,
  2014.

\bibitem{Gig06}
Y.~Giga.
\newblock {\em {Surface evolution equations: a level set approach}}, volume~99
  of {\em Monographs in Mathematics}.
\newblock Birkh{\"a}user Basel, Basel, 2006.

\bibitem{GGIS}
Y.~Giga, S.~Goto, H.~Ishii, and M.-H. Sato.
\newblock Comparison principle and convexity preserving properties for singular
  degenerate parabolic equations on unbounded domains.
\newblock {\em Indiana University Mathematics Journal}, pages 443--470, 1991.

\bibitem{Grunewald:2011cb}
N.~Grunewald and I.~Kim.
\newblock {A variational approach to a quasi-static droplet model}.
\newblock {\em Calculus of Variations and Partial Differential Equations},
  41(1-2):1--19, 2011.

\bibitem{Huisken:69hmQ_XX}
G.~Huisken.
\newblock {Flow by mean curvature of convex surfaces into spheres}.
\newblock {\em Journal of Differential Geometry}, 20(1):237--266, 1984.

\bibitem{Hui87}
G.~Huisken.
\newblock {The volume preserving mean curvature flow}.
\newblock {\em {Journal f{\"u}r die reine und angewandte Mathematik (Crelles
  Journal)}}, 1987(382):35--48, 1987.

\bibitem{HS1}
G.~Huisken and C.~Sinestrari.
\newblock {Convexity estimates for mean curvature flow and singularities of
  mean convex surfaces}.
\newblock {\em Acta Mathematica}, 183(1):45--70, 1999.

\bibitem{HS2}
G.~Huisken and C.~Sinestrari.
\newblock {Mean curvature flow singularities for mean convex surfaces}.
\newblock {\em Calculus of Variations and Partial Differential Equations},
  8(1):1--14, 1999.

\bibitem{ImbSil13}
C.~Imbert and L.~Silvestre.
\newblock An introduction to fully nonlinear parabolic equations.
\newblock In {\em An introduction to the K{\"a}hler-Ricci flow}, pages 7--88.
  Springer, 2013.

\bibitem{KK19}
I.~Kim and D.~Kwon.
\newblock Volume preserving mean curvature flow for star-shaped sets.
\newblock {\em arXiv preprint arXiv:1808.04922 [math.AP]}, 2018.
\newblock \url{https://arxiv.org/abs/1808.04922}.

\bibitem{Kim:2002ta}
I.~C. Kim.
\newblock Uniqueness and existence results on the hele-shaw and the stefan
  problems.
\newblock {\em Archive for Rational Mechanics and Analysis}, 168(4):299--328,
  2003.

\bibitem{Kim:2005fi}
I.~C. Kim.
\newblock {A free boundary problem with curvature}.
\newblock {\em Communications in Partial Difference Equations},
  30(1-2):121--138, 2005.

\bibitem{Lin}
L.~Lin.
\newblock {Mean curvature flow of star-shaped hypersurfaces}.
\newblock {\em arXiv:1508.01225v1 [math.DG]}, 2015.
\newblock \url{https://arxiv.org/abs/1508.01225}.

\bibitem{LS95}
S.~Luckhaus and T.~Sturzenhecker.
\newblock {Implicit time discretization for the mean curvature flow equation}.
\newblock {\em Calculus of Variations and Partial Differential Equations},
  3(2):253--271, 1995.

\bibitem{Maggi:2012tg}
F.~Maggi.
\newblock {\em {Sets of Finite Perimeter and Geometric Variational Problems}}.
\newblock An Introduction to Geometric Measure Theory. Cambridge University
  Press, 2012.

\bibitem{Sheng:2010uw}
W.~Sheng and X.-J. Wang.
\newblock {Regularity and singularity in the mean curvature flow}.
\newblock In {\em Trends in partial differential equations}, volume~10 of {\em
  Advanced Lectures in Mathematics}, pages 399--436. Higher Education Press and
  International Press, 2010.

\bibitem{Smo1}
K.~Smoczyk.
\newblock {Starshaped hypersurfaces and the mean curvature flow}.
\newblock {\em Manuscripta Mathematica}, 95(2):225--236, 1998.

\bibitem{soner1993}
H.~M. Soner and P.~E. Souganidis.
\newblock Singularities and uniqueness of cylindrically symmetric surfaces
  moving by mean curvature.
\newblock {\em Communications in partial differential equations},
  18(5-6):859--894, 1993.

\end{thebibliography}

\end{document}